\newtheorem{thm}{Theorem}[section]
\newtheorem{lem}[thm]{Lemma}
\newtheorem{prop}[thm]{Proposition}
\newtheorem{rem}[thm]{Remark}
\newcommand{\lesi}{\lesssim}
\newcommand{\f}{\frac}
\newcommand{\su}{\subset}
\newcommand{\vc}{\infty}
\newcommand{\Rd}{\mathbb{R}^d}
\newcommand{\La}{\mathcal{L}_a}
\begin{document}

\title[Smoothing estimates for Schr\"odinger operators with inverse-square potentials]
  {Weighted estimates for powers and Smoothing estimates of Schr\"odinger operators with inverse-square potentials}

\authors

\author[T. A. Bui]{The Anh Bui}
\address{The Anh Bui \\
Department of Mathematics, Macquarie University, NSW 2109, Australia}
\email{the.bui@mq.edu.au, bt\_anh80@yahoo.com}

\author[P. D'Ancona]{Piero D'Ancona}
\address{Piero D'Ancona\\ Dipartimento di Matematica, Sapienza -- Universit\`{a} di Roma, Piazzale A.~Moro 2, 00185 Roma, Italy}
\email{dancona@mat.uniroma1.it}

\author[X. T. Duong]{Xuan Thinh Duong}
\address{Xuan Thinh Duong \\
Department of Mathematics, Macquarie University, NSW 2109, Australia}
\email{xuan.duong@mq.edu.au}

\author[J. Li]{Ji Li}
\address{Ji Li \\
Department of Mathematics, Macquarie University, NSW 2109, Australia}
\email{ji.li@mq.edu.au}

\author[F. K. Ly]{Fu Ken Ly}
\address{Fu Ken Ly \\
Mathematics Learning Centre, University of Sydney, NSW 2006, Australia}

\email{ken.ly@sydney.edu.au}

\subjclass[2010]{Primary: 35P25, 35Q55}
\keywords{inverse-square potential; Littlewood–Paley theory; heat kernel estimate; negative power; smoothing estimate.}

\arraycolsep=1pt

\begin{abstract}
Let $\La$ be a Schr\"odinger operator with
inverse square potential $a|x|^{-2}$ on $\Rd, d\geq 3$. The main aim of this paper is to prove weighted estimates for fractional powers of $\La$. The proof is based on weighted Hardy inequalities and weighted inequalities for square functions associated to $\La$. As an application, we obtain smoothing estimates regarding the propagator $e^{it\La}$.
\end{abstract}

\maketitle

\section{Introduction}
In this paper, we consider the following Schr\"odinger operators with inverse-square potentials on $\mathbb{R}^d, d\geq 3$,
\begin{equation}\label{defn-La}
\La = -\Delta+\f{a}{|x|^2} \quad \text{with} \quad a\geq -\Big(\f{d-2}{2}\Big)^2.
\end{equation}
Set
$$
\sigma:=\f{d-2}{2}-\f{1}{2}\sqrt{(d-2)^2+4a}.
$$

The Schr\"odinger operator  $\La$ is understood as the Friedrichs extension of $-\Delta+\f{a}{|x|^2}$  defined initially on $C^\vc_c(\Rd\backslash\{0\})$. The condition $a\geq -\bigl(\f{d-2}{2}\bigr)^2$ guarantees that $\La$ is nonnegative.
 It is well-known that $\La$ is self-adjoint and the extension may not be unique as $-\bigl(\f{d-2}{2}\bigr)^2<a<1-\bigl(\f{d-2}{2}\bigr)^2$. For further details, we refer the readers to \cite{K.et.al, K.et.al 2, Ka, PST, T, MZZ}.

It is well known that Schr\"odinger operators with inverse-square potentials $\La$ have a wide range of applications in physics and mathematics spanning areas such as combustion theory, the Dirac equation with Coulomb potential, quantum mechanics and
the study of perturbations of classic space-time metrics. See for example \cite{Bu1, Bu2, VZ, Ka} and their references.

Recently there has been a spate of activity dedicated to the operator $\La$. Strichartz estimates, which are an effective tool for studying the behavior of solutions to nonlinear Schr\"odinger equations and wave equations related to $\La$, were investigated in \cite{Bu1, Bu2} . In \cite{GVV} the authors developed the study of Strichartz estimates for the propagators $e^{it(\Delta+V)}$ with  $V(x)\sim|x|^{-2}$.
The well-posedness and behaviour of the solutions to the heat equation related to $\La$ was studied in \cite{VZ}. In \cite{ZZ}, using  Morawetz-type inequalities and Sobolev norm properties related to $\La$, the long-time behavior of solutions to nonlinear Schr\"odinger equations associated to $\La$ was considered. More recently, the authors in \cite{K.et.al} established the equivalence between  $L^p$-based Sobolev norms  defined
in terms of $\La^{s/2}$ and in terms of $(-\Delta)^{s/2}$ for all regularities $0 < s < 2$.

In this paper, our first objective is to extend the estimates in \cite{K.et.al} to weighted estimates. More precisely, we will prove the following result.

\begin{thm}\label{mainthm}
	Suppose that $d\geq 3$, $a\geq -\left(\f{d-2}{2}\right)^2$ and $0<s<2$. If $r_1:=1\vee \f{d}{d-\sigma}<p<\f{d}{(s+\sigma)\vee 0}:=r_2$ (where $a\vee b=\max\{a,b\}$) with convention $\f{d}{0}=\vc$ then for $w\in A_{p/r_1}\cap RH_{(r_2/p)'}$ we have
	\begin{equation}\label{eq1-mainthm}
	\|(-\Delta)^{s/2}f\|_{L^p_w}\lesi \|\La^{s/2}f\|_{L^p_w}.
	\end{equation}
If	$1<p<\vc$ with $p_1:=1\vee \f{d}{d-\sigma}<p<\f{d}{s\vee \sigma}:=p_2$ then for $w\in A_{p/p_1}\cap RH_{(p_2/p)'}$ we have
\begin{equation}\label{eq2-mainthm}
\|\La^{s/2}f\|_{L^p_w}\lesi \|(-\Delta)^{s/2}f\|_{L^p_w}
\end{equation}
\end{thm}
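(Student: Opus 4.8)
The plan is to reduce both inequalities to the boundedness on $L^p_w$ of a single composed operator, and then to analyse that operator through the subordination formula for fractional powers together with Duhamel's formula, using the weighted Hardy inequality and the weighted square-function estimates as the two analytic inputs.

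First I would carry out the reduction. Substituting $g=(-\Delta)^{s/2}f$ in \eqref{eq2-mainthm} and $g=\La^{s/2}f$ in \eqref{eq1-mainthm} --- admissible substitutions on the stated weighted spaces, since in the indicated range of $p$ and for $w$ in the given classes the heat-kernel bounds for $\La$ guarantee that $e^{-t\La}$, $\La^{\pm s/2}$ and $(-\Delta)^{\pm s/2}$ act as expected --- one sees that \eqref{eq2-mainthm} is equivalent to $\|\La^{s/2}(-\Delta)^{-s/2}g\|_{L^p_w}\lesi\|g\|_{L^p_w}$ and \eqref{eq1-mainthm} to $\|(-\Delta)^{s/2}\La^{-s/2}g\|_{L^p_w}\lesi\|g\|_{L^p_w}$. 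I describe the argument for the first; the second is parallel, with the roles of $\La$ and $-\Delta$ exchanged.

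Next comes the key identity. Using $\lambda^{s/2}=c_s\int_0^\infty t^{-s/2}(1-e^{-t\lambda})\,\tfrac{dt}{t}$ (valid since $0<s<2$) for both $\La$ and $-\Delta$, and Duhamel's formula in the form $e^{-t\La}-e^{t\Delta}=-a\int_0^t e^{-(t-r)\La}\,|x|^{-2}\,e^{r\Delta}\,dr$, one obtains
\[
\La^{s/2}(-\Delta)^{-s/2}=I+a\,c_s\int_0^\infty t^{-s/2}\Big(\int_0^t e^{-(t-r)\La}\,|x|^{-2}\,e^{r\Delta}\,(-\Delta)^{-s/2}\,dr\Big)\frac{dt}{t}=:I+a\,\mathcal E .
\]
The identity term is harmless, so everything reduces to showing that $\mathcal E$ is bounded on $L^p_w$. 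Here one factors the singular weight as $|x|^{-2}=|x|^{-(2-s)}\,|x|^{-s}$ and commutes $e^{r\Delta}$ past $(-\Delta)^{-s/2}$, so that $|x|^{-2}e^{r\Delta}(-\Delta)^{-s/2}=|x|^{-(2-s)}\,\bigl(|x|^{-s}(-\Delta)^{-s/2}\bigr)\,e^{r\Delta}$; the middle operator $|x|^{-s}(-\Delta)^{-s/2}$ is bounded on $L^p_w$ by the (classical) weighted Hardy inequality, and $e^{r\Delta}$ is bounded on $L^p_w$ uniformly in $r$. One is thus left with controlling, on $L^p_w$, an average over the scales $t$ and $r$ of $e^{-(t-r)\La}$ applied to profiles $|x|^{-(2-s)}\psi_r$ with $\sup_r\|\psi_r\|_{L^p_w}\lesi\|g\|_{L^p_w}$; after a Fubini rearrangement in $(t,r)$ and Cauchy--Schwarz in the scale variable this quantity is dominated by the weighted $L^p_w$-norm of a square function of the type $S_\La h=\bigl(\int_0^\infty|u\La e^{-u\La}h|^2\,\tfrac{du}{u}\bigr)^{1/2}$, and the weighted square-function estimate for $S_\La$ closes the bound. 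For \eqref{eq1-mainthm} the symmetric computation uses instead the Duhamel form $e^{-t\La}-e^{t\Delta}=-a\int_0^t e^{(t-r)\Delta}|x|^{-2}e^{-r\La}\,dr$: after the same factorisation one meets $|x|^{-s}\La^{-s/2}$ --- bounded on $L^p_w$ precisely by the \emph{weighted Hardy inequality for $\La$} --- and a square function associated to $-\Delta$.

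The main obstacle is the simultaneous matching of exponent ranges and weight classes. Because $\La$ fails to have Gaussian heat-kernel bounds when $\sigma>0$ --- one only has $p^{\La}_t(x,y)\lesi\bigl(1\wedge\tfrac{\sqrt t}{|x|}\bigr)^{\sigma}\bigl(1\wedge\tfrac{\sqrt t}{|y|}\bigr)^{\sigma}t^{-d/2}e^{-c|x-y|^2/t}$, with the analogous weighted Gaussian bound adapted to $\sigma<0$ --- each of the three ingredients (weighted Hardy for $\La$, weighted boundedness of $S_\La$, weighted boundedness of $e^{-t\La}$) is available only for $p$ in a bounded interval and for $w$ in an $A_q\cap RH_{q'}$-type class, and one must verify that the intervals $(r_1,r_2)$, $(p_1,p_2)$ and the classes $A_{p/r_1}\cap RH_{(r_2/p)'}$, $A_{p/p_1}\cap RH_{(p_2/p)'}$ are contained in all those dictated by the three inputs; this is exactly where $\sigma$, $s$ and $d$ combine into the stated endpoints. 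A secondary, purely technical point is justifying the interchange of the $t$- and $r$-integrations with the unbounded operators and with the subordination formula, which is handled by first working with truncated integrals on a dense class and then passing to the limit using the a priori bounds. Once the weighted Hardy inequalities and the weighted square-function estimates from the previous sections are in hand, the proof of Theorem~\ref{mainthm} itself is short.
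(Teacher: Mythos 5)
Your proposal correctly identifies the three analytic inputs (Duhamel's formula for $e^{-t\La}-e^{t\Delta}$, the weighted Hardy inequalities for $\La$ and $-\Delta$, and weighted square--function estimates), but it organizes them around the operator identity $\La^{s/2}(-\Delta)^{-s/2}=I+a\,\mathcal E$, and the crucial step --- the $L^p_w$-boundedness of the error operator $\mathcal E$ --- is asserted rather than proved, and the mechanism you describe does not close. After your factorisation $|x|^{-2}=|x|^{-(2-s)}|x|^{-s}$ and the Hardy inequality, you are left with
$\int_0^\vc t^{-s/2}\int_0^t e^{-(t-r)\La}\bigl[|x|^{-(2-s)}\psi_r\bigr]\,dr\,\f{dt}{t}$
with $\sup_r\|\psi_r\|_{L^p_w}\lesi\|g\|_{L^p_w}$. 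This operator is exactly scale invariant of order $0$, so putting absolute values inside and using the natural smoothing bound $\|e^{-u\La}|x|^{-(2-s)}\psi\|_{L^p}\lesi u^{-(2-s)/2}\|\psi\|_{L^p}$ (with $u=t-r$) produces, after integrating out $r$, an integral of the form $\int_0^\vc u^{-1}\,du$: a logarithmic divergence at both ends. Moreover the inner operator $\int_0^t e^{-(t-r)\La}\cdots\,dr$ is not of the form $u\La e^{-u\La}h$ for a fixed $h$, so ``Cauchy--Schwarz in the scale variable'' does not reduce it to the square function $S_\La$; some genuine orthogonality between scales is needed and your sketch does not supply it.

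The paper avoids this critical-scaling obstruction by never forming $\mathcal E$: it characterises both $\|(-\Delta)^{s/2}f\|_{L^p_w}$ and $\|\La^{s/2}f\|_{L^p_w}$ by vertical square functions (Theorem \ref{thm-square}), and then estimates the square function of the \emph{difference} $t\La e^{-t\La}+t\Delta e^{t\Delta}$ applied to $f$ by $\|\,f/|x|^s\|_{L^p_w}$ (Theorem \ref{thm-difference}); the point is that Duhamel's formula is applied at the \emph{kernel} level (Proposition \ref{prop-difference}) to show that the difference kernel $D_t(x,y)$ gains a factor $\bigl(1+\f{|x|+|y|}{\sqrt t}\bigr)^{-2}$ over the Gaussian, and it is this pointwise gain, fed into a dyadic decomposition in $t$ together with maximal-function and duality arguments, that makes all scale integrals converge. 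The conclusion is then a three-line triangle inequality combined with the weighted Hardy inequality $\||x|^{-s}f\|_{L^p_w}\lesi\|\La^{s/2}f\|_{L^p_w}$. If you wish to salvage your route, you would need to extract the analogous quantitative gain (the extra factor $t/(|x|^2+|y|^2+t)$, say) from the Duhamel integral before integrating in the scale variables, which amounts to redoing Proposition \ref{prop-difference}; a secondary point you leave unchecked is that the exponent intervals and weight classes in the statement are exactly the intersection of those required by the three inputs.
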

The proof of the theorem relies havily on the heat kernel of $\La$ in \cite{MS, LS} (see Theorem \ref{thm-heatkernelLa}) which is valid for $d\geq 3$. This is a main reason for the restriction $d\geq 3$.

Let us describe the motivation for the results in Theorem \ref{mainthm}.
\begin{enumerate}[{\rm (i)}]
	\item When $s=1$, \eqref{eq1-mainthm} and \eqref{eq2-mainthm} are known as the boundedness of the Riesz transforms and the reverse Riesz transforms, respectively. Note that the boundedness of the Riesz transforms related to $\La$ was obtained in \cite{HL}. Hence, Theorem \ref{mainthm} can be considered as a natural outgrowth of this direction of research.
	\item The second motivation of our present work is the need of the following estimate of the form:
	$$
	\||x|^\beta \La^\theta f \|_{L^2}\lesi \||x|^\beta(-\Delta)^\theta f\|_{L^2}
	$$
	for certain $\beta$ and $\theta$. This type of estimate was studied in \cite{CD} for certain Schr\"odinger operators instead of $\La$ and played a key role in studying dispersive properties of Schr\"odinger equations on non-flat waveguides. See also \cite{DL} for related weighted estimates in
  $L^{p}$ spaces with mixed radial--angular integrability.
	\item Another motivation of Theorem \ref{mainthm} is its utility in obtaining smoothing estimates related to the propagators $e^{it\La}$. We give such estimates in Theorem \ref{the:mainthm3} below. Note that smoothing estimates related to Schr\"odinger operators are a topic of interest in PDEs and have a close relationship to Strichartz estimates. For further details, the reader can consult \cite{RS, Da, IK, KY, Ma} and the references therein.
	\end{enumerate}
	
	As an application of Theorem \ref{mainthm}, we obtain the following smoothing estimates.


\begin{thm}\label{the:mainthm3}
	Suppose that $d\geq 3$, $a> -\left(\f{d-2}{2}\right)^2+\f{1}{4}$, and 
	consider the Schr\"{o}dinger flow $e^{it\La}f$. Then for all $0<\epsilon<1 $ we have the following smoothing estimates, with $C$ independent of $\epsilon$:
  \begin{equation}\label{eq:firstest}
    \int
    \int
    \left[
    	\frac{|x|^{\epsilon-1}}{(1+|x|^{\epsilon})^{2}}
    	|\nabla e^{it\La}f|^{2}
    	+
    	\frac{ |x|^{\epsilon-3}}{1+|x|^{\epsilon}}
    	|e^{it\La}f|^{2}
    \right]
    dxdt
    \le
    C\epsilon^{-1}\|(-\Delta)^{1/4} f\|_{L^{2}}^{2}
  \end{equation}
  and also
  \begin{equation}\label{eq:secondest}
    \|w(x)^{1/2}
    (-\Delta)^{1/4} e^{it\La}f\|_{L^{2}(\mathbb{R}^{d+1})}
    \lesssim
    C \epsilon^{-1/2}\|f\|_{L^{2}},
    \qquad
    w(x)=\frac{|x|^{\epsilon-1}}{(1+|x|^{\epsilon})^{2}}.
  \end{equation}
  On the other hand, for the wave flow $e^{it \mathcal{L}^{1/2}_{a}}f$
  we have the estimate
  \begin{equation}\label{eq:thirdest}
    \|w(x)^{1/2} e^{it\La^{1/2}}f\|_{L^{2}(\mathbb{R}^{d+1})}
    \lesssim
    C \epsilon^{-1/2}\|f\|_{L^{2}},
    \qquad
    w(x)=\frac{|x|^{\epsilon-1}}{(1+|x|^{\epsilon})^{2}}.
  \end{equation}
\end{thm}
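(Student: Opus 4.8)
The plan is to reduce all three smoothing estimates to classical Kato–Yajima-type smoothing bounds for the free flow $e^{it(-\Delta)}$ and the free wave $e^{it(-\Delta)^{1/2}}$, transported through the operator $\La$ by means of Theorem \ref{mainthm}. The first step is to recall that the classical $\epsilon$-dependent smoothing estimates hold for the free Schr\"odinger and wave propagators, namely that for $0<\epsilon<1$ one has
\begin{equation*}
\|w(x)^{1/2}(-\Delta)^{1/4}e^{it(-\Delta)}g\|_{L^2(\mathbb{R}^{d+1})}\lesi \epsilon^{-1/2}\|g\|_{L^2},
\qquad
\|w(x)^{1/2}e^{it(-\Delta)^{1/2}}g\|_{L^2(\mathbb{R}^{d+1})}\lesi \epsilon^{-1/2}\|g\|_{L^2},
\end{equation*}
with $w(x)=|x|^{\epsilon-1}(1+|x|^\epsilon)^{-2}$; these follow from the standard resolvent/spectral-measure approach to smoothing (see \cite{RS, KY}) applied to the Morrey–Campanato-type weight $w$, whose relevant norm is $\sim\epsilon^{-1}$ uniformly. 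The key observation is that $e^{it\La}$ and $e^{it\La^{1/2}}$ are \emph{not} free flows, so one cannot apply these directly; instead one uses that the two operators are intertwined in a quantitative, weighted way.

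The second and central step is to replace $(-\Delta)^{1/4}e^{it\La}f$ by $\La^{1/4}e^{it\La}f$. Since $e^{it\La}$ commutes with $\La$ and is unitary on $L^2$, we have $\|\La^{1/4}e^{it\La}f\|_{L^2}=\|\La^{1/4}f\|_{L^2}$ at each time. To pass from $(-\Delta)^{1/4}$ to $\La^{1/4}$ on the left-hand side inside a weighted $L^2_x$ norm, I would apply \eqref{eq2-mainthm} of Theorem \ref{mainthm} with $s=1/2$ and $p=2$: we need $w\in A_{2/p_1}\cap RH_{(p_2/2)'}$ where $p_1=1\vee\frac{d}{d-\sigma}$ and $p_2=\frac{d}{(1/2)\vee\sigma}$. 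Under the hypothesis $a>-(\frac{d-2}{2})^2+\frac14$ one checks that $\sigma<\frac12$ (indeed $\sigma$ is a strictly decreasing function of $a$ and at $a=-(\frac{d-2}{2})^2+\frac14$ one gets $\sigma=\frac12$), hence $p_2=2d$ and $p_1$ is comfortably below $2$ for $d\geq 3$; it then remains to verify that the specific weight $w(x)=|x|^{\epsilon-1}(1+|x|^\epsilon)^{-2}$ lies in the required Muckenhoupt/reverse-H\"older classes with constants controlled \emph{independently of $\epsilon$} (the power $\epsilon-1$ is harmless since $|\epsilon-1|<1<d$, and the factor $(1+|x|^\epsilon)^{-2}$ is bounded and $A_1$-like with uniform constant). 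This yields, time-by-time and then after integrating in $t$,
\begin{equation*}
\|w^{1/2}(-\Delta)^{1/4}e^{it\La}f\|_{L^2(\mathbb{R}^{d+1})}\lesi \|w^{1/2}\La^{1/4}e^{it\La}f\|_{L^2(\mathbb{R}^{d+1})},
\end{equation*}
and the right-hand side is exactly a smoothing quantity for the flow generated by $\La$.

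The third step is to prove the smoothing estimate for $\La$ itself, i.e. to bound $\|w^{1/2}\La^{1/4}e^{it\La}f\|_{L^2(\mathbb{R}^{d+1})}$ by $\epsilon^{-1/2}\|f\|_{L^2}$. This is where one genuinely needs harmonic analysis of $\La$ rather than of $-\Delta$: the natural route is a resolvent estimate $\sup_{\lambda>0,\,\mu\neq 0}\|w^{1/2}\La^{1/4}(\La-(\lambda+i\mu))^{-1}\La^{1/4}w^{1/2}\|_{L^2\to L^2}\lesi \epsilon^{-1}$, which by the standard $TT^*$/Kato-smoothing equivalence is equivalent to \eqref{eq:secondest} (with the $\La^{1/4}$-weighted formulation), and similarly a resolvent bound for $\La^{1/2}$ gives the wave estimate \eqref{eq:thirdest}. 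Such resolvent estimates for inverse-square operators are available in the literature (e.g. the limiting-absorption and Morawetz-type estimates for $\La$ referenced in \cite{BPST}-type works and in \cite{Bu1, Bu2, GVV}); alternatively one can transfer the free resolvent bound through the weighted equivalence of Sobolev norms again, using that $(\La-z)^{-1}$ and $(-\Delta-z)^{-1}$ differ in a controlled fashion on weighted spaces. Finally, for \eqref{eq:firstest}, I would integrate by parts to write $\int|\nabla e^{it\La}f|^2 \frac{|x|^{\epsilon-1}}{(1+|x|^\epsilon)^2}\,dx + \int \frac{a}{|x|^2}|e^{it\La}f|^2\frac{|x|^{\epsilon-1}}{(1+|x|^\epsilon)^2}\,dx$ in terms of $\langle \La e^{it\La}f, \tfrac{|x|^{\epsilon-1}}{(1+|x|^\epsilon)^2}e^{it\La}f\rangle$ plus error terms coming from derivatives of the weight, and control everything by $\|\La^{1/4}e^{it\La}f\|$-type smoothing norms together with a Hardy inequality in the weighted setting; the $|x|^{\epsilon-3}(1+|x|^\epsilon)^{-1}$ term is precisely what a weighted Hardy inequality produces from the gradient term, which is why the hypothesis $a>-(\frac{d-2}{2})^2+\frac14$ (ensuring a quantitative weighted Hardy inequality with room to spare) reappears here.

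The main obstacle I expect is the third step: establishing the $\La$-resolvent/smoothing bound with an explicit $\epsilon^{-1/2}$ dependence and making every constant uniform in $\epsilon$. The free-flow smoothing estimates are classical, and Theorem \ref{mainthm} is precisely engineered to move the operator $(-\Delta)^{s/2}$ past $\La^{s/2}$ in weighted $L^p$; the delicate points are (a) verifying membership of the degenerate weight $w(x)=|x|^{\epsilon-1}(1+|x|^\epsilon)^{-2}$ in the sharp classes $A_{p/p_1}\cap RH_{(p_2/p)'}$ with $\epsilon$-independent constants — which forces the quantitative version of Theorem \ref{mainthm} to be used, tracking how the implicit constant depends on the weight's characteristic — and (b) handling the low-frequency/large-$|x|$ behaviour, where the potential is negligible but the weight $w$ decays only polynomially, so the smoothing gain must be extracted from the free part. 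Once these are in place, \eqref{eq:firstest}, \eqref{eq:secondest} and \eqref{eq:thirdest} follow by combining the weighted Sobolev equivalence, the unitarity of the flows on $L^2$, and the weighted Hardy inequality.
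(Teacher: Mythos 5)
Your proposal misses the central technical device of the paper and the substitute you suggest would not close the argument. The paper proves \eqref{eq:firstest} \emph{directly} via a Morawetz (virial) identity: starting from the identity \eqref{eq:virial} for solutions of $iu_t+\Delta u -a|x|^{-2}u=0$, it picks the explicit radial multiplier $\psi(r)=\int_0^r \frac{s^\epsilon}{1+s^\epsilon}\,ds$, computes $\psi''$, $\psi'/r$, $\Delta^2\psi$ and $-\psi'\partial_r(a|x|^{-2})$ exactly, and observes that the potential term contributes $+2a r^{-3}\frac{r^\epsilon}{1+r^\epsilon}|u|^2$ while $-\frac12\Delta^2\psi$ contributes $r^{-3}\frac{r^\epsilon}{1+r^\epsilon}\bigl[\frac{\mu_d}{2}+\epsilon\beta(r^\epsilon)\bigr]|u|^2$ with $\mu_d=(d-1)(d-3)$ and $|\beta|\le 3d^2$. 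The condition $a>-(\frac{d-2}{2})^2+\frac14$ is exactly what makes $2a+\frac{\mu_d}{2}>0$ with room $\delta=a+\frac{\mu_d}{4}$, and the smallness of $\epsilon$ is then \emph{chosen} to absorb $3d^2\epsilon$. The left-hand boundary terms are bounded in $\dot H^{1/2}$ by a bilinear interpolation argument together with the almost-conservation \eqref{eq:energy} coming from Theorem \ref{mainthm}. In the paper, the logical order is: \eqref{eq:firstest} (Morawetz) $\Rightarrow$ \eqref{eq:secondest} (weighted $A_2$-boundedness of $\nabla(-\Delta)^{-1/2}$ plus Theorem \ref{mainthm} twice) $\Rightarrow$ \eqref{eq:thirdest} (Kato smoothing theory, Theorems 2.2 and 2.4 of \cite{D}).

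Your proposal reverses this order and, more seriously, replaces the Morawetz computation by an appeal to ``resolvent estimates for inverse-square operators available in the literature'' or to ``transferring the free resolvent bound through the weighted equivalence.'' Neither of these works as stated. First, Theorem \ref{mainthm} gives equivalence of $\|\La^{s/2}f\|_{L^p_w}$ and $\|(-\Delta)^{s/2}f\|_{L^p_w}$, but it does \emph{not} compare the resolvents $(\La-z)^{-1}$ and $(-\Delta-z)^{-1}$ uniformly near the spectrum, and that uniformity is precisely what a limiting-absorption/smoothing bound needs; the potential $a|x|^{-2}$ is scale-invariant and genuinely not a perturbation at the level of the resolvent. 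Second, the $\epsilon^{-1}$ dependence with constants uniform in $\epsilon\in(0,1)$ does not come out of the cited references; it comes out of the explicit choice of $\psi$ and $\epsilon=\min\{1,\delta/(3d^2)\}$. Third, your sketch of \eqref{eq:firstest} proposes to bound the left-hand side by ``$\|\La^{1/4}e^{it\La}f\|$-type smoothing norms'' — but those smoothing norms \emph{are} the content of \eqref{eq:secondest}, and in the paper \eqref{eq:secondest} is \emph{deduced from} \eqref{eq:firstest}, not the other way around; the argument would be circular. Also note that \eqref{eq:firstest} contains two terms (a weighted $|\nabla u|^2$ term and a weighted $|u|^2$ term), and it is the Morawetz multiplier $\nabla\psi\cdot\nabla$, not the energy multiplier $w$, that produces both with the right signs. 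Your use of Theorem \ref{mainthm} at $s=1/2$, $p=2$, and your observation that $a>-(\frac{d-2}{2})^2+\frac14 \Leftrightarrow \sigma<\frac12$, and your step \eqref{eq:secondest}$\Rightarrow$\eqref{eq:thirdest} via Kato theory do coincide with the paper's reasoning; but the indispensable core — the virial identity with the explicit weight — is absent.
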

\begin{rem}The condition $a> -\left(\f{d-2}{2}\right)^2+\f{1}{4}$ guarantees that the weight $w$ satisfies the conditions in Theorem \ref{mainthm}. To the best of our knowledge, it is an open question whether the results in Theorem \ref{the:mainthm3} holds true for $a> -\left(\f{d-2}{2}\right)^2$. In the case the dimension $d=3$, the potential $V$ is repulsive and the results of Theorem 1.2 were included in the references \cite{FV,BRV}. Theorem \ref{the:mainthm3} is new for dimensions $d>3$.
\end{rem}

To prove Theorem \ref{mainthm} although we follow the approach in \cite{K.et.al}, some significant modifications and improvements are required due to the following reasons. The first reason is that we work on the weighted Lebesgue estimates instead of unweighted estimates. The second one we need to point out is that in our present paper we employ the vertical square functions in place of the (discrete) Littlewood--Paley square functions. This allows us to bypass the use of spectral multipliers  as in \cite{K.et.al}. Moreover, due to the lack of regularity condition of the heat kernels of $e^{-t\La}$, certain singular integrals considered in the paper may be beyond the Calder\'on-Zygmund theory. This causes some challenging matters, and we overcome these problems by using the criteria established in \cite{AM, BZ} for a singular integrals to be bounded on weighted Lebesgue spaces.

The organization of the paper is as follows. In Section 2 we recall some preliminaries on the Muckenhoupt weights and  two criteria  for a singular integrals to be bounded on weighted and unweighted Lebesgue spaces. Some kernels estimates will be derived in Section 3. In Section 4, we first prove the weighted Hardy inequality and weighted estimates for square functions related to $\La$ which are of interest in their own right. We conclude Section 4 by using these results to prove  Theorems \ref{mainthm} and  \ref{the:mainthm3}.

Throughout the paper, we always use $C$ and $c$ to denote positive constants that are independent of the main parameters involved but whose values may differ from line to line. We will write
$A\lesi B$ if there is a universal constant $C$ so that $A\leq CB$ and $A\sim B$ if $A\lesi B$ and $B\lesi A$. For $a, b\in \mathbb{R}$, we  denote $a\vee b=\max\{a,b\}$ and $a\wedge b=\min\{a,b\}$. For $p\in [1,\vc]$, we denote by $p' =\f{p}{p-1}$ the conjugate exponent of $p$.
\section{Preliminaries}

\subsection{Muckenhoupt weights}
We start with some notations which will be used frequently. For a measurable subset $E\subset \Rd$ and a measurable function $f$ we denote
$$
\fint_E f(x)dx=\f{1}{|E|}\int_E f(x)dx.
$$
Given a ball $B$, we denote $S_j(B)=2^{j}B\backslash 2^{j-1}B$ for $j=1, 2, 3, \ldots$, and we set $S_0(B)=B$.

Let $1\leq q<\infty$. A nonnegative locally integrable function $w$ belongs to the {\sl Muckenhoupt class} $A_q$, say $w\in A_q$, if there exists a positive constant $C$ so that
$$\Big(\fint_B w(x)dx\Big)\Big(\fint_B w^{-1/(q-1)}(x)dx\Big)^{q-1}\leq C, \quad\mbox{if}\; 1<q<\infty,$$
and
$$
\fint_B w(x)dx\leq C \mathop{\mbox{ess-inf}}\limits_{x\in B}w(x),\quad{\rm if}\; q=1,
$$
for all balls $B$ in $\mathbb R^d$. We say that $w\in A_\infty$ if $w\in A_q$ for any $q\in [1,\infty)$. We shall denote $w(E) :=\int_E w(x)dx$ for any measurable set $E \subset \mathbb{R}^d$.

The reverse H\"older classes are defined in the following way: $w\in RH_r, 1 < r < \infty$, if there is a constant $C$ such that for
any ball $B \subset \mathbb{R}^d$,
$$
\Big(\fint_B w^r (x) dx\Big)^{1/r} \leq C \fint_B w(x)dx.
$$
The endpoint $r = \infty$ is given by the condition: $w \in RH_\infty$ whenever, there is a constant $C$ such that for any ball
$B \subset \mathbb{R}^d$,
$$
w(x)\leq C \fint_B w(y)dy  \ \text{for a.e. $x\in B$}.
$$

Let $w \in A_\vc$ . For $0< p <\infty$, the weighted space $L^p_w(\mathbb{R}^d)$ is defined as  the space of $w(x)dx$-measurable functions $f$ such that
$$\|f\|_{L^p_w(\mathbb{R}^d)}:=\Big(\int_{\mathbb{R}^d} |f(x)|^p w(x)dx\Big)^{1/p}<\infty.$$

It is well-known that the power weight $w(x)=|x|^\alpha \in A_p$ if and only if $-d<\alpha<d(p-1)$. Moreover, $w(x)=|x|^\alpha \in RH_q$ if and only if $\alpha q>-d$.

We sum up some of the properties of Muckenhoupt classes and reverse H\"older classes in the following
results. See \cite{Du, JN}.
\begin{lem}\label{weightedlemma1}
	The following properties hold:
	\begin{enumerate}[{\rm (i)}]
		\item $w\in A_p, 1<p<\vc$ if and only if $w^{1-p'}\in A_{p'}$.
		\item $A_1\subset A_p\subset A_q$ for $1\leq p\leq q\leq \infty$.
		\item $RH_\infty \su RH_q \su RH_p$ for $1< p\leq q\leq \infty$.
		\item If $w \in A_p, 1 < p < \vc$, then there exists $1 < q < p$ such that $w \in A_q$.
		\item If $w \in RH_q, 1 < q < \vc$, then there exists $q < p < \infty$ such that $w \in RH_p$.
		\item $A_\vc =\cup_{1\leq p<\vc}A_p = \cup_{1< p\leq \vc}RH_p$.
		\item Let $1<p_0 < p < q_0<\vc$. Then we have
		$$
		w\in A_{\f{p}{p_0}}\cap RH_{(\f{q_0}{p})'}\Longleftrightarrow
		w^{1-p'}\in A_{\f{p'}{q'_0}}\cap RH_{(\f{p'_0}{p'})'}.
		$$
	\end{enumerate}
\end{lem}

\subsection{Hardy-Littlewood maximal functions}

For $r>0$, the Hardy-Littlewood maximal function $\mathcal{M}_r$ is defined by
$$
\mathcal{M}_rf(x)=\sup_{B\ni x}\Big(\f{1}{|B|}\int_B|f(y)|^r\,dy\Big)^{1/r}, \ x\in \mathbb{R}^d,
$$
where the supremum is taken over all balls $B$ containing $x$. When $r=1$, we write $\mathcal{M}$ instead of $\mathcal{M}_1$.

We now record the following results concerning the weak type estimates and the weighted estimates of the maximal functions.
\begin{lem}[\cite{St}]\label{Lem-maximalfunction}
	Let $0< r<\vc$. Then we have for $p>r$ and $w\in A_{p/r}$,
		$$
		\|\mathcal{M}_rf\|_{L^p_w}\lesi \|f\|_{L^p_w}.
		$$
	\end{lem}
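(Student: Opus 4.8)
The plan is to deduce the statement from the classical Muckenhoupt theorem for the uncentered Hardy--Littlewood maximal operator $\mathcal{M}=\mathcal{M}_1$, namely that $\|\mathcal{M}g\|_{L^q_u}\lesi\|g\|_{L^q_u}$ whenever $1<q<\vc$ and $u\in A_q$; this is exactly the result recorded in \cite{St}. The bridge between this fact and the operator $\mathcal{M}_r$ is the elementary pointwise identity
\[
\mathcal{M}_rf(x)=\bigl(\mathcal{M}(|f|^r)(x)\bigr)^{1/r},\qquad x\in\Rd,
\]
which is immediate from the definition of $\mathcal{M}_r$ upon taking the $r$-th power inside the averages defining $\mathcal{M}$.

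Granting this, the argument is just a rescaling of exponents. Since $p>r$, I would set $q:=p/r>1$; by hypothesis $w\in A_{p/r}=A_q$. Then, using the identity above,
\[
\|\mathcal{M}_rf\|_{L^p_w}^p
=\int_{\Rd}\bigl(\mathcal{M}(|f|^r)(x)\bigr)^{q}\,w(x)\,dx
=\bigl\|\mathcal{M}(|f|^r)\bigr\|_{L^q_w}^{q}
\lesi \bigl\||f|^r\bigr\|_{L^q_w}^{q}
=\int_{\Rd}|f(x)|^{rq}\,w(x)\,dx
=\|f\|_{L^p_w}^p,
\]
where the inequality is the Muckenhoupt theorem applied with exponent $q$, weight $w\in A_q$, and input function $g=|f|^r$, and the final identity uses $rq=p$. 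Extracting $p$-th roots gives $\|\mathcal{M}_rf\|_{L^p_w}\lesi\|f\|_{L^p_w}$, which is the claim.

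There is essentially no obstacle here: the only point requiring care is the bookkeeping of the exponents $q=p/r$ and $rq=p$, and the sole nontrivial ingredient — the weighted bound for $\mathcal{M}$ on $L^q_w$ with $w\in A_q$ — is a standard fact which we simply cite from \cite{St}. If a self-contained argument were wanted, one could obtain it from the Besicovitch covering lemma together with the $A_q$ condition and a good-$\lambda$ inequality, or by interpolation with change of measure starting from the weak type $(q,q)$ estimate; we do not reproduce this.
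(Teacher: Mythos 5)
Your argument is correct and is exactly the standard derivation that the paper's citation to \cite{St} points to: the pointwise identity $\mathcal{M}_rf=(\mathcal{M}(|f|^r))^{1/r}$ reduces the claim to Muckenhoupt's theorem for $\mathcal{M}$ on $L^{q}_w$ with $q=p/r>1$ and $w\in A_q$, and your exponent bookkeeping is right. The paper offers no proof of its own here, so there is nothing further to compare.
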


\subsection{Two theorems on the boundedness of singular integrals}
We recall the definition of linearizable operators in \cite{GF}. An operator $T$ defined on $L^2(\mathbb{R}^d)$ is said to be a \emph{linearizable operator} if there exists a Banach space $\mathbb{B}$ and a linear operator $U$ from $L^2(\mathbb{R}^d)$ into $L^2(\mathbb{R}^d, \mathbb{B})$ so that
$$
|Tf(x)|=\|Uf(x)\|_{\mathbb{B}}
$$
for all $f\in L^2(\mathbb{R}^d)$ and a.e. $x\in \mathbb{R}^d$.

It can be verified that a linearizable operator is a sublinear operator. The class of linearizable operator includes linear operators, maximal operators and square functions.

We first recall a theorem which is taken from \cite[Theorem 6.6]{BZ} on a criterion for the singular integrals to be bounded on the weighted Lebesgue spaces.
\begin{thm}\label{BZ-thm}
	Let $1\leq p_0< q_0< \vc$ and let $T$ be a linearizable operator. Assume that $T$ can be extended to be bounded on $L^{q_0}$. Assume that there exists a family of operators $\{\mathcal{A}_t\}_{t>0}$ satisfying that for $j\geq 2$ and every ball $B$
	\begin{equation}\label{eq1-BZ}
	\Big(\fint_{S_j(B)}|T(I-\mathcal{A}_{r_B})f|^{q_0}d\mu\Big)^{1/q_0}\leq
	\alpha(j)\Big(\fint_B |f|^{q_0}d\mu\Big)^{1/q_0},
	\end{equation}
	and
	\begin{equation}\label{eq1-BZ-bis}
	\Big(\fint_{S_j(B)}|\mathcal{A}_{r_B}f|^{q_0}d\mu\Big)^{1/q_0}\leq
	\alpha(j)\Big(\fint_B |f|^{p_0}d\mu\Big)^{1/p_0},
	\end{equation}
	for all $f$ supported in $B$. If $\sum_j \alpha(j)2^{jd}<\vc$, then $T$ is bounded on $L^p_w(\mathbb{R}^d)$ for all $p\in (p_0,q_0)$ and $w\in
		A_{\f{p}{p_0}}\cap RH_{(\f{q_0}{p})'}$.
\end{thm}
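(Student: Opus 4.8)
The plan is to reduce the weighted bound to a good-$\lambda$ type inequality, or more efficiently, to invoke the Fefferman--Stein vector-valued machinery in the form of a sharp maximal function estimate adapted to the family $\{\mathcal{A}_t\}_{t>0}$. First I would recall that, since $T$ is linearizable, it suffices to prove the weighted inequality for the associated linear operator $U$ taking values in $\mathbb{B}$; the pointwise identity $|Tf(x)|=\|Uf(x)\|_{\mathbb{B}}$ then transfers the bound to $T$. So from now on I treat $T$ as if it were a (Banach-valued) singular operator, bounded on $L^{q_0}$ by hypothesis. The strategy is to estimate, for a fixed ball $B$ with radius $r_B$, the oscillation of $Tf$ over $B$ by splitting the action of the off-diagonal operator through $I = (I-\mathcal{A}_{r_B}) + \mathcal{A}_{r_B}$ and decomposing the input $f$ into $f\mathbf{1}_{4B}$ and the far pieces $f\mathbf{1}_{S_j(B)}$, $j\ge 2$.

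The key steps, in order: (1) Fix $x\in B$ and write $Tf(x) = T(f\mathbf{1}_{4B})(x) + \sum_{j\ge 2} T(f\mathbf{1}_{S_j(B)})(x)$. For the local piece, use the $L^{q_0}\to L^{q_0}$ boundedness of $T$ together with Kolmogorov's inequality (or a direct weak-type argument) to control $\bigl(\fint_B |T(f\mathbf{1}_{4B})|^{q_0}\bigr)^{1/q_0}$ by $C\,\mathcal{M}_{q_0}f(z)$ for $z\in B$; here one absorbs the near-diagonal behaviour that \eqref{eq1-BZ} does not see. (2) For each far piece with $j\ge 2$, interpose $\mathcal{A}_{r_B}$: bound $\bigl(\fint_{B}|T(f\mathbf{1}_{S_j(B)})|^{q_0}\bigr)^{1/q_0}$ using \eqref{eq1-BZ} (which gives the $T(I-\mathcal{A}_{r_B})$ part with gain $\alpha(j)$ relative to the $q_0$-average over the dilate $2^jB$) plus \eqref{eq1-BZ-bis} (which controls the $\mathcal{A}_{r_B}$ part with gain $\alpha(j)$, now relative to a $p_0$-average over $2^jB$). (3) Sum over $j$: since averaging over $2^jB$ of data supported in $S_j(B)$ costs a factor $2^{jd}$ when comparing to the Hardy--Littlewood maximal function, the hypothesis $\sum_j \alpha(j)2^{jd}<\infty$ makes the series converge and yields the pointwise estimate
\begin{equation*}
  \Bigl(\fint_{B}|Tf|^{q_0}\Bigr)^{1/q_0} - c_B \;\lesssim\; \inf_{z\in B}\bigl(\mathcal{M}_{q_0}(f\mathbf{1}_{4B})(z) + \mathcal{M}_{p_0}f(z)\bigr),
\end{equation*}
which, upon taking the appropriate average, produces a bound of the form $T^{\#}_{q_0}f(x) \lesssim \mathcal{M}_{p_0}f(x)$ for a suitable sharp maximal function $T^{\#}_{q_0}$ modulo the $\mathcal{A}$-corrections (this is precisely the abstract setup of Auscher--Martell and of \cite{BZ}). (4) Finally, invoke the Fefferman--Stein inequality $\|g\|_{L^p_w}\lesssim \|M^{\#}_{q_0}g\|_{L^p_w}$, valid for $w\in A_\infty$ and $p_0<p<q_0$ with the restriction $w\in RH_{(q_0/p)'}$ ensuring the $L^{q_0}$-off-diagonal term lands correctly, together with Lemma \ref{Lem-maximalfunction} giving $\|\mathcal{M}_{p_0}f\|_{L^p_w}\lesssim\|f\|_{L^p_w}$ for $w\in A_{p/p_0}$. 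Combining the two gives $\|Tf\|_{L^p_w}\lesssim\|f\|_{L^p_w}$ on exactly the range $w\in A_{p/p_0}\cap RH_{(q_0/p)'}$.

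The main obstacle I anticipate is Step (4): one must be careful that the sharp-maximal-function inequality does not require $T$ (or $M^{\#}_{q_0}$) applied to an a priori $L^p_w$ function be finite somewhere, so a limiting/truncation argument restricting to $f\in L^{q_0}\cap L^p_w$ and bounded $w$ is needed before passing to the general case by density and monotone convergence. A secondary technical point is justifying the interchange of $T$ with the sum $\sum_j$ over the annular pieces; this is handled by first proving everything for $f\in L^\infty_c$ and using the $L^{q_0}$-boundedness to control tails. Everything else is the standard good-lambda / Auscher--Martell bookkeeping, and the decisive arithmetic is the convergence condition $\sum_j\alpha(j)2^{jd}<\infty$, which is exactly what makes the annular sums in Steps (2)--(3) summable.
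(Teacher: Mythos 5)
Your proposal attempts an Auscher--Martell sharp-maximal-function argument, but the hypotheses of the theorem do not point in the direction that argument requires. Conditions \eqref{eq1-BZ} and \eqref{eq1-BZ-bis} are stated for $f$ \emph{supported in $B$}, with the output measured on the \emph{far annulus} $S_j(B)$: this is an ``atom $\to$ molecule'' dispersion estimate, saying that $T(I-\mathcal{A}_{r_B})$ and $\mathcal{A}_{r_B}$ spread a localized input with quantitative decay. Your Step (2) needs the reverse orientation: you decompose $f$ into far annular pieces $f\mathbf{1}_{S_j(B)}$ and then try to bound their images \emph{locally on $B$}. Neither hypothesis applies in that direction. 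The natural repair (cover $S_j(B)$ by $\sim 2^{j(d-1)}$ balls of radius $r_B$, re-center, apply the hypotheses ball by ball, and compare the resulting averages over $S_{j'}(B_k)$ to the average over $B$) introduces extra factors of $2^{jd/q_0}$ from the size mismatch $|S_{j'}(B_k)|/|B|\sim 2^{jd}$ and a factor $2^{j(d-1)}$ from the number of covering balls, and these are not compensated by the assumed summability $\sum_j\alpha(j)2^{jd}<\infty$, so the series you need to sum over $j$ does not converge for general $q_0$.

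A second, independent mismatch: in Step (2) you want to control $T\mathcal{A}_{r_B}(f\mathbf{1}_{S_j(B)})$ on $B$ via \eqref{eq1-BZ-bis}, but that condition controls only $\mathcal{A}_{r_B}$, not the composition $T\circ\mathcal{A}_{r_B}$; to insert $T$ you would have to pass through the $L^{q_0}$-boundedness of $T$ and then need a dispersion estimate for $\mathcal{A}_{r_B}$ again in the wrong orientation. Note finally that the paper does not prove this statement; it cites \cite[Theorem 6.6]{BZ}, where the argument runs through abstract (weighted) Hardy-space theory: conditions \eqref{eq1-BZ}--\eqref{eq1-BZ-bis} are precisely what shows $T$ maps $(I-\mathcal{A}_{r_B})$-adapted atoms to integrable molecules, giving a weighted Hardy-space $\to L^1_w$ bound which is then interpolated against the $L^{q_0}$ bound. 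That is a genuinely different route from the sharp-maximal scheme of Theorem~\ref{Martell-thm}, whose hypotheses \eqref{e1-Martell}--\eqref{e2-Martell} have the opposite structure and are not consequences of \eqref{eq1-BZ}--\eqref{eq1-BZ-bis}.
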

Note that \cite[Theorem 6.6]{BZ} proves Theorem \ref{BZ-thm} for $q_0=2$, but their arguments also work well for any value $q_0\in (1,\vc)$.

The following theorem is a direct consequence of \cite[Theorem 3.7]{AM} which give a sufficient conditions for a singular integral to be bounded on Lebesgue spaces which plays an important role in the sequel.

 \begin{thm}\label{Martell-thm}

Let $1\leq  p_0< q_0\leq \infty.$  Let $T$ be a bounded sublinear
operator on $L^{p_0}(\mathbb{R}^d)$. Assume that there exists  a family of operators $\{\mathcal{A}_t\}_{t>0}$  satisfying that
\begin{eqnarray}\label{e1-Martell}
\Big( \fint_{B} \big| T(I-\mathcal{A}_{r_B})f\big|^{p_0}dx\Big)^{1/p_0} \leq
C \mathcal{M}_{p_0}(f)(x),
\end{eqnarray}
and
\begin{eqnarray}\label{e2-Martell}
 \Big( \fint_{B} \big| T\mathcal{A}_{r_B}f\big|^{q_0}dx\Big)^{1/q_0} \leq
C \mathcal{M}_{p_0}(|Tf|)(x),
\end{eqnarray}
\noindent for all balls
$B$ with radius $r_B$, all $f \in C^{\infty}_c(\mathbb{R}^d) $ and all $x\in B$. Then $T$ is bounded on $L^p(\mathbb{R}^d)$ for all
 $p_0<p<q_0$.
\end{thm}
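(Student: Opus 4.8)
The plan is to derive this as an instance of the Auscher--Martell extrapolation machinery: \eqref{e1-Martell}--\eqref{e2-Martell} are exactly the hypotheses of \cite[Theorem 3.7]{AM}, so in practice one would just invoke that result, and I sketch how its proof goes. By density of $C^\infty_c(\Rd)$ in $L^p$ and sublinearity of $T$, it suffices to prove $\|Tf\|_{L^p}\lesi\|f\|_{L^p}$ for $f\in C^\infty_c(\Rd)$. Since $T$ is bounded on $L^{p_0}$, $Tf\in L^{p_0}$, so $u:=\mathcal{M}(|Tf|^{p_0})$ is finite a.e.\ and in $L^{1,\vc}(\Rd)$; put also $v:=\mathcal{M}(|f|^{p_0})$. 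By Lemma \ref{Lem-maximalfunction} (exponent $p/p_0>1$), $\|v\|_{L^{p/p_0}}\lesi\|f\|_{L^p}^{p_0}$, and since $|Tf|^{p_0}\le u$ pointwise, $\|Tf\|_{L^p}^{p_0}\le\|u\|_{L^{p/p_0}}$; thus the goal reduces to $\|u\|_{L^{p/p_0}}\lesi\|v\|_{L^{p/p_0}}$.

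The core would be a good-$\lambda$ inequality: for some absolute $K>1$, all $\lambda>0$ and all $\gamma\in(0,1)$,
\[
\bigl|\{u>K\lambda,\ v\le\gamma\lambda\}\bigr|\le C\Bigl(\f{\gamma}{K}+K^{-q_0/p_0}\Bigr)\bigl|\{u>\lambda\}\bigr|.
\]
To establish it, fix $\lambda$, decompose the open finite-measure set $\Om_\lambda:=\{u>\lambda\}$ into Whitney balls $\{B_i\}$, and for each $i$ take $\bar x_i\notin\Om_\lambda$ with $\mathrm{dist}(\bar x_i,B_i)\lesi r_{B_i}$, enlarging $B_i$ to a ball $B=\kappa B_i$ (with $\kappa$ absolute) containing $\bar x_i$. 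If the left-hand set meets $B_i$, fix $z_i\in B_i$ with $v(z_i)\le\gamma\lambda$ (otherwise $B_i$ contributes nothing). A routine Whitney estimate reduces matters on $B_i$ to controlling $\mathcal{M}(|Tf|^{p_0}\mathbf{1}_{2B_i})$ at level $\sim K\lambda$; then write $Tf=T(I-\mathcal{A}_{r_B})f+T\mathcal{A}_{r_B}f$. For the first summand, \eqref{e1-Martell} evaluated at $z_i$ gives $\fint_{B}|T(I-\mathcal{A}_{r_B})f|^{p_0}\lesi\gamma\lambda$, whence the weak-$(1,1)$ bound for $\mathcal{M}$ contributes $\lesi(\gamma/K)|B_i|$. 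For the second summand, \eqref{e2-Martell} holds at \emph{every} point of $B$, in particular at $\bar x_i$, where $u(\bar x_i)=\mathcal{M}(|Tf|^{p_0})(\bar x_i)\le\lambda$ since $\bar x_i\notin\Om_\lambda$; this gives $\bigl(\fint_{B}|T\mathcal{A}_{r_B}f|^{q_0}\bigr)^{1/q_0}\lesi\lambda^{1/p_0}$, so boundedness of $\mathcal{M}$ on $L^{q_0/p_0}$ (here $q_0/p_0>1$) contributes $\lesi K^{-q_0/p_0}|B_i|$. Summing over $i$ gives the good-$\lambda$ inequality.

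From the good-$\lambda$ inequality one multiplies by $\lambda^{p/p_0-1}$, integrates over $\lambda>0$, rescales $\lambda\mapsto K\lambda$ on the left to produce a factor $K^{-p/p_0}$, and then chooses first $K$ large enough that $CK^{-q_0/p_0}<\tfrac12K^{-p/p_0}$ --- possible \emph{exactly because $p<q_0$}, so that $p/p_0-q_0/p_0<0$ --- and then $\gamma$ small; absorbing the $\|u\|_{L^{p/p_0}}$-term yields $\|u\|_{L^{p/p_0}}\lesi\|v\|_{L^{p/p_0}}\lesi\|f\|_{L^p}^{p_0}$, i.e.\ $\|Tf\|_{L^p}\lesi\|f\|_{L^p}$. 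The one technical wrinkle is that this absorption requires $\|u\|_{L^{p/p_0}}<\vc$ a priori, which is not guaranteed by $T$ being bounded on $L^{p_0}$ alone; as usual one runs the whole argument with $u$ replaced by $\min(u,N)\mathbf{1}_{B(0,R)}$ with constants independent of $N,R$, and lets $R,N\to\vc$, using the already-established finiteness of $\|v\|_{L^{p/p_0}}$. A routine density argument then extends the bound to all of $L^p(\Rd)$.

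The step I expect to be the real obstacle is the handling of the right-hand side $\mathcal{M}_{p_0}(|Tf|)$ in \eqref{e2-Martell}: it is self-referential and is in fact \emph{large} on the very level sets $\Om_\lambda$ one is trying to control, so it is useless via a naive pointwise estimate. The mechanism that saves the day --- and the heart of the argument --- is that \eqref{e2-Martell} is available at every point of the enlarged ball $B$, allowing one to evaluate it at the Whitney point $\bar x_i$ outside $\Om_\lambda$, where that maximal function is $\lesi\lambda$; this, together with the genuine integrability gain from $p_0$ to $q_0$ (which is precisely what caps the conclusion at $p<q_0$), is what closes the good-$\lambda$ inequality. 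Since all of this is already encapsulated in \cite[Theorem 3.7]{AM}, the actual proof is just a verification that \eqref{e1-Martell}--\eqref{e2-Martell} meet its hypotheses.
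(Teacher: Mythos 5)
Your proposal is correct and follows exactly the paper's route: the paper offers no argument beyond observing that the statement is a direct consequence of \cite[Theorem 3.7]{AM}, which is precisely your top-level step. Your additional sketch of the underlying good-$\lambda$ mechanism (Whitney decomposition, evaluating \eqref{e2-Martell} at a point outside the level set to defuse the self-referential term, and the $K^{-q_0/p_0}$ gain that forces $p<q_0$) is an accurate account of how the cited result is proved, modulo the routine modification needed when $q_0=\infty$.
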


\section{Some kernel estimates}

For a constant $\alpha\in \mathbb{R}$. We denote
$$
d_\alpha=\begin{cases}
\f{d}{\alpha}, &\alpha>0\\
\vc, &\alpha\leq 0.
\end{cases}
$$

\begin{thm}\label{thm-Tt}
	Let $\{T_t\}_{t>0}$ be a family of linear operators on $L^2(\Rd)$ with their associated kernels $T_t(x,y)$. Assume that there exist $C,c>0$  and $\alpha, \beta\in \mathbb{R}$ with $d'_\beta<d_\alpha$ such that for all $t>0$ and $x,y \in \Rd\backslash \{0\}$,
	\begin{equation}\label{kernelTt}
	|T_t(x,y)|\leq C\Big(1+\f{\sqrt{t}}{|x|}\Big)^\alpha\Big(1+\f{\sqrt{t}}{|y|}\Big)^\beta t^{-d/2}e^{-\f{|x-y|^2}{ct}}.
	\end{equation}
	Assume that $d'_\beta<p\leq q< d_\alpha$. Then for every $t> 0$, any measurable subsets $E, F\subset \Rd$, and all $f\in L^p(E)$, we have:
	\begin{equation}\label{eq1-Tt}
	\left\|T_tf\right\|_{L^q(F)}\leq Ct^{-\f{d}{2}(\f{1}{p}-\f{1}{q})}e^{-\f{d(E,F)^2}{ct}}\|f\|_{L^p(E)}.
	\end{equation}
\end{thm}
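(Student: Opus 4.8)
The plan is to reduce the estimate, in two easy steps, to a scale‑invariant model estimate, and then to prove that model estimate by separating the region where the origin–weights $(1+\sq t/|x|)^{\alpha}$, $(1+\sq t/|y|)^{\beta}$ are singular from the region where they are harmless. First I would remove the off‑diagonal factor: since $f$ is supported in $E$ and $T_tf$ is evaluated on $F$, every pair $(x,y)$ that occurs has $|x-y|\ge d(E,F)$, so $e^{-|x-y|^2/(ct)}\le e^{-d(E,F)^2/(2ct)}e^{-|x-y|^2/(2ct)}$; after absorbing the constant it suffices to prove the homogeneous bound
$$\|T_tf\|_{L^q(\Rd)}\lesi t^{-\f{d}{2}(\f{1}{p}-\f{1}{q})}\|f\|_{L^p(\Rd)}.$$
By \eqref{kernelTt} (with $c$ replaced by $2c$) one may then replace $T_t$ by the positive integral operator $S_t$ with kernel $K_t(x,y)=(1+\sq{t}/|x|)^\alpha(1+\sq{t}/|y|)^\beta t^{-d/2}e^{-|x-y|^2/(ct)}$.

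Next I would rescale. Substituting $x=\sq{t}\,u$, $y=\sq{t}\,v$ and setting $\widetilde f(v)=f(\sq{t}\,v)$, one checks that $(S_tf)(\sq{t}\,u)=(S_1\widetilde f)(u)$, where $S_1$ is the integral operator with kernel $K_1(u,v)=(1+|u|^{-1})^\alpha(1+|v|^{-1})^\beta e^{-|u-v|^2/c}$. Keeping track of the Jacobians gives $\|S_tf\|_{L^q}=t^{d/(2q)}\|S_1\widetilde f\|_{L^q}$ and $\|\widetilde f\|_{L^p}=t^{-d/(2p)}\|f\|_{L^p}$, so the whole theorem reduces to
$$\|S_1g\|_{L^q(\Rd)}\lesi\|g\|_{L^p(\Rd)},\qquad d'_\beta<p\le q<d_\alpha.$$

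To prove this model estimate, write $\alpha_+=\alpha\vee0$, $\beta_+=\beta\vee0$, and use $(1+|u|^{-1})^\alpha\lesi|u|^{-\alpha_+}$ on $\{|u|\le1\}$ and $\lesi1$ on $\{|u|>1\}$, likewise in $v$; note $\alpha_+q<d$ is equivalent to $q<d_\alpha$ and $\beta_+p'<d$ to $p>d'_\beta$. On $\{|u|\le1\}$, Hölder in $v$ together with the uniform bound $\sup_u\big\|(1+|v|^{-1})^\beta e^{-|u-v|^2/c}\big\|_{L^{p'}_v}<\vc$ — finite precisely because $\beta_+p'<d$ — yields $|S_1g(u)|\lesi|u|^{-\alpha_+}\|g\|_{L^p}$, and $|u|^{-\alpha_+}$ lies in $L^q(\{|u|\le1\})$ since $\alpha_+q<d$. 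On $\{|u|>1\}$ the $u$‑weight is bounded, so one splits $g=g\mathbf 1_{\{|v|\le1\}}+g\mathbf 1_{\{|v|>1\}}$: for the first piece Hölder in $v$ over $\{|v|\le1\}$ produces a factor $\lesi e^{-c|u|^2}$ for $|u|$ large and $\lesi1$ on $\{1<|u|<2\}$, hence in $L^q$; the second piece is dominated by the Gaussian convolution $\int e^{-|u-v|^2/c}|g(v)|\,dv$, so Young's inequality — legitimate since $p\le q$ — gives the $L^p\to L^q$ bound. Summing the three contributions proves the model estimate, hence the theorem.

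The main obstacle is this last step, and within it the interplay at the origin of the two singular weights $|u|^{-\alpha_+}$ and $|v|^{-\beta_+}$: these are exactly what force the sharp range $d'_\beta<p\le q<d_\alpha$, and some care is needed to keep the Hölder estimates in $v$ \emph{uniform} in $u$ near the origin while exploiting the Gaussian decay away from the origin to recover both integrability at infinity and the Young gain available when $p\le q$. The reductions in the first two steps are routine, and once the region decomposition is set up each term is an elementary power‑weight/Gaussian computation.
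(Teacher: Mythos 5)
Your proof is correct and is essentially the paper's argument: both reduce to a positive integral operator with kernel $(1+\sqrt{t}/|x|)^\alpha(1+\sqrt{t}/|y|)^\beta t^{-d/2}e^{-|x-y|^2/(ct)}$, extract the factor $e^{-d(E,F)^2/(c't)}$, and then prove the $L^p\to L^q$ bound by decomposing $\Rd$ according to whether $|x|$, $|y|$ are smaller or larger than $\sqrt{t}$, using H\"older in $y$ (with $\beta_+p'<d$), power-weight integrability of $|x|^{-\alpha_+}$ near the origin (with $\alpha_+q<d$), and Young's inequality for the Gaussian piece (needing $p\le q$). The only cosmetic difference is your preliminary rescaling to $t=1$; the paper instead carries $\sqrt{t}$ through the four-region split $x,y\in B(0,\sqrt{t})$ or not, which is the same decomposition in different notation.
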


To prove this theorem, we need the following elementary results.
\begin{lem}\label{lem1-Tt}
	(a) Let $\kappa\in (-\vc,d)$. Then there exists $C>0$ so that for all $r>0$
	$$
	\int_{B(0,r)}\f{1}{|x|^{\kappa}}dx\leq Cr^{d-\kappa}.
	$$
	
	(b) For $p\in [1, \vc)$ we have
	$$
	\Big(\int_{\Rd}\Big[\f{1}{t^{d/2}}e^{-\f{|x-y|^2}{ct}}\Big]^pdy\Big)^{1/p}\leq \f{C}{t^{\f{d}{2p'}}}
	$$
	uniformly in $x\in \Rd$.
\end{lem}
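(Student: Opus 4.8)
\textbf{Proof plan for Lemma \ref{lem1-Tt}.}

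\emph{Part (a).} The plan is to write the integral in polar coordinates. Since $\kappa<d$, the radial integrand $\rho^{-\kappa}\rho^{d-1}=\rho^{d-1-\kappa}$ has exponent $d-1-\kappa>-1$, so it is integrable near the origin. Explicitly,
\begin{equation*}
\int_{B(0,r)}\f{1}{|x|^\kappa}\,dx
=\om_{d-1}\int_0^r \rho^{d-1-\kappa}\,d\rho
=\f{\om_{d-1}}{d-\kappa}\,r^{d-\kappa},
\end{equation*}
where $\om_{d-1}$ denotes the surface measure of the unit sphere in $\Rd$. This gives the claim with $C=\om_{d-1}/(d-\kappa)$. (If one only wants $\kappa$ in a range bounded away from $d$, the constant is uniform; for a single fixed $\kappa<d$ it is simply finite, which is all that is needed.)

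\emph{Part (b).} The idea is to reduce to a single Gaussian integral by translation invariance. Substituting $z=y-x$ removes the dependence on $x$:
\begin{equation*}
\int_{\Rd}\Big[\f{1}{t^{d/2}}e^{-\f{|x-y|^2}{ct}}\Big]^p\,dy
=t^{-dp/2}\int_{\Rd}e^{-\f{p|z|^2}{ct}}\,dz.
\end{equation*}
Now rescale $z=\sq{t}\,u$, so $dz=t^{d/2}\,du$, and the last integral equals $t^{d/2}\int_{\Rd}e^{-(p/c)|u|^2}\,du=C_{p,c}\,t^{d/2}$ with $C_{p,c}=(\pi c/p)^{d/2}$ finite since $p\geq 1$ and $c>0$. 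Hence the left-hand side equals $C_{p,c}\,t^{-dp/2+d/2}=C_{p,c}\,t^{-d(p-1)/2}$, and taking the $p$-th root yields the bound $C\,t^{-d(p-1)/(2p)}=C\,t^{-d/(2p')}$, as claimed, with $C$ uniform in $x$.

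There is no real obstacle here: both parts are standard computations, the only points to check being that the exponent $d-1-\kappa$ is $>-1$ in part (a) (guaranteed by $\kappa<d$) and that the Gaussian in part (b) is integrable (guaranteed by $p\geq 1$, $c>0$). One may alternatively prove (b) by Minkowski's integral inequality or by noting it is the statement that the heat kernel at time $\sim t$ has $L^p$ norm of size $t^{-d/(2p')}$, but the direct change of variables above is the shortest route.
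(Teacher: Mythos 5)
Your computations are correct, and since the paper explicitly omits the proof as "simple," your polar-coordinates argument for (a) and the translation-plus-scaling Gaussian computation for (b) are exactly the standard details the authors had in mind. Nothing to add.
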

\begin{proof}
	The proof of this lemma is simple and we omit details.
\end{proof}

We now turn to prove Theorem \ref{thm-Tt}.

\begin{proof}[Proof of Theorem \ref{thm-Tt}:]

	\noindent Assume \eqref{kernelTt} holds for some $\alpha,\beta\in \mathbb{R}$. Then for $x\in F$ we have
	$$
	\begin{aligned}
	\|T_tf\|_{L^q(F)}&\leq \left\{\int_{F}\Big[\int_E \Big(1+\f{\sqrt{t}}{|x|}\Big)^\alpha\Big(1+\f{\sqrt{t}}{|y|}\Big)^\beta t^{-d/2}e^{-\f{|x-y|^2}{ct}}|f(y)|dy\Big]^qdx\right\}^{1/q}\\
	&\leq \left\{\int_{F\cap B(0,\sqrt{t})}\Big[\int_{E\cap B(0,\sqrt{t})} \Big(1+\f{\sqrt{t}}{|x|}\Big)^\alpha\Big(1+\f{\sqrt{t}}{|y|}\Big)^\beta t^{-d/2}e^{-\f{|x-y|^2}{ct}}|f(y)|dy\Big]^qdx\right\}^{1/q}\\
	&\quad \quad + \left\{\int_{F\cap B(0,\sqrt{t})}\Big[\int_{E\backslash B(0,\sqrt{t})} \Big(1+\f{\sqrt{t}}{|x|}\Big)^\alpha\Big(1+\f{\sqrt{t}}{|y|}\Big)^\beta t^{-d/2}e^{-\f{|x-y|^2}{ct}}|f(y)|dy\Big]^qdx\right\}^{1/q}\\
	&\quad \quad + \left\{\int_{F\backslash B(0,\sqrt{t})}\Big[\int_{E\cap B(0,\sqrt{t})} \Big(1+\f{\sqrt{t}}{|x|}\Big)^\alpha\Big(1+\f{\sqrt{t}}{|y|}\Big)^\beta t^{-d/2}e^{-\f{|x-y|^2}{ct}}|f(y)|dy\Big]^qdx\right\}^{1/q}\\
	&\quad \quad + \left\{\int_{F\backslash B(0,\sqrt{t})}\Big[\int_{E\backslash B(0,\sqrt{t})} \Big(1+\f{\sqrt{t}}{|x|}\Big)^\alpha\Big(1+\f{\sqrt{t}}{|y|}\Big)^\beta t^{-d/2}e^{-\f{|x-y|^2}{ct}}|f(y)|dy\Big]^qdx\right\}^{1/q}\\
	&\leq E_1+E_2+E_3+E_4.
	\end{aligned}
	$$
	By H\"older's inequality, Lemma \ref{lem1-Tt} and the fact that $\beta p'<d$ we have
	$$
	\begin{aligned}
	E_1&\leq Ce^{-\f{d(E,F)^2}{ct}}\Big[\int_{F\cap B(0,\sqrt{t})}t^{-qd/2}\Big(1+\f{\sqrt{t}}{|x|}\Big)^{q\alpha}dx\Big]^{1/q} \Big(\int_E|f|^p\Big)^{1/p}\Big[\int_{B(0,\sqrt{t})} \Big(1+\f{\sqrt{t}}{|y|}\Big)^{\beta p'} dy\Big]^{1/p'}\\
	&\lesi e^{-\f{d(E,F)^2}{ct}}t^{-\f{d}{2}(\f{1}{p}-\f{1}{q})} \Big(\int_E|f|^p\Big)^{1/p}.
	\end{aligned}
	$$
	For the second term, using H\"older's inequality, Lemma \ref{lem1-Tt} again with the fact that $\alpha q<d$ we arrive at
	$$
	\begin{aligned}
	E_2&\lesi \left\{\int_{F\cap B(0,\sqrt{t})}\Big[\int_{E\backslash B(0,\sqrt{t})} \Big(1+\f{\sqrt{t}}{|x|}\Big)^\alpha t^{-d/2}e^{-\f{|x-y|^2}{ct}}|f(y)|dy\Big]^qdx\right\}^{1/q}\\
	&\lesi e^{-\f{d(E,F)^2}{2ct}}\left\{\int_{F\cap B(0,\sqrt{t})}\Big|\Big(1+\f{\sqrt{t}}{|x|}\Big)^\alpha \Big(\int_E|f|^p\Big)^{1/p} \Big(\int_{\Rd}\Big[t^{-d/2}e^{-\f{|x-y|^2}{2ct}}\Big]^{p'}dy\Big)^{1/p'}\Big|^qdx\right\}^{1/q}\\
	&\lesi e^{-\f{d(E,F)^2}{2ct}}t^{-\f{d}{2p}} \Big(\int_E|f|^p\Big)^{1/p}\Big[\int_{F\cap B(0,\sqrt{t})}\Big(1+\f{\sqrt{t}}{|x|}\Big)^{\alpha q}dx\Big]^{1/q}\\
	&\lesi e^{-\f{d(E,F)^2}{2ct}}t^{-\f{d}{2}(\f{1}{p}-\f{1}{q})} \Big(\int_E|f|^p\Big)^{1/p}.
	\end{aligned}
	$$
	By a similar argument we can also dominate $E_3$ by
	$$
	t^{-\f{d}{2}(\f{1}{p}-\f{1}{q})} e^{-\f{d(E,F)^2}{2ct}}\Big(\int_E|f|^p\Big)^{1/p}.
	$$
	It remains to estimate the last term $E_4$. We observe that
	$$
	\begin{aligned}
	E_4\lesi \left\{\int_{F\backslash B(0,\sqrt{t})}\Big[\int_{E\backslash B(0,\sqrt{t})} t^{-d/2}e^{-\f{|x-y|^2}{ct}}|f(y)|dy\Big]^qdx\right\}^{1/q}
	\end{aligned}
	$$
	At this stage, by using the standard argument we can prove that
	$$
	\left\{\int_{F\backslash B(0,\sqrt{t})}\Big[\int_{E\backslash B(0,\sqrt{t})} t^{-d/2}e^{-\f{|x-y|^2}{ct}}|f(y)|dy\Big]^qdx\right\}^{1/q}\lesi t^{-\f{d}{2}(\f{1}{p}-\f{1}{q})} e^{-\f{d(E,F)^2}{2ct}}\Big(\int_E|f|^p\Big)^{1/p}.
	$$
	Hence,
	$$
	E_4\lesi t^{-\f{d}{2}(\f{1}{p}-\f{1}{q})} e^{-\f{d(E,F)^2}{2ct}}\Big(\int_E|f|^p\Big)^{1/p}.
	$$
	This completes the proof of \eqref{eq1-Tt}.
	
\end{proof}

\begin{thm}[\cite{MS, LS}]\label{thm-heatkernelLa}
Assume $d\geq 3$ and $a\geq -\Big(\f{d-2}{2}\Big)^2$. Let $p_t(x, y)$ be the
kernel associated to the semigroups $e^{-t\La}$. Then there exist two positive constants $C$ and $c$ such that for all $t>0$ and $x,y \in \Rd\backslash \{0\}$,
$$
p_t(x,y)\leq C\Big(1+\f{\sqrt{t}}{|x|}\Big)^\sigma\Big(1+\f{\sqrt{t}}{|y|}\Big)^\sigma t^{-d/2}e^{-\f{|x-y|^2}{ct}}.
$$
\end{thm}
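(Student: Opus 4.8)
The plan is to combine the exact dilation invariance of $\La$ with a desingularizing weight (intertwining) argument in the spirit of Milman--Semenov, reducing the claimed bound to the classical Gaussian heat kernel estimate for a degenerate elliptic operator on a weighted space. Since $\La$ is homogeneous of degree $2$ under dilations, the map $f\mapsto f(\lambda\,\cdot)$ conjugates $e^{-t\La}$ to $e^{-\lambda^2 t\La}$, whence $p_t(x,y)=t^{-d/2}p_1\big(x/\sq t,\,y/\sq t\big)$; so it suffices to prove the estimate at $t=1$, that is $p_1(x,y)\lesi\big(1\vee|x|^{-1}\big)^\sigma\big(1\vee|y|^{-1}\big)^\sigma e^{-|x-y|^2/c}$.

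Next, put $w(x):=|x|^{-\sigma}$. Since $-\sigma$ is a root of the indicial equation $\nu(\nu+d-2)=a$, a direct computation gives $\La w=0$ on $\Rd\backslash\{0\}$, and hence $w^{-1}\La(w\,\cdot)=-w^{-2}\di\big(w^2\na\,\cdot\big)=:L$, which is a nonnegative self-adjoint Markovian operator on $L^2(d\mu)$ with $d\mu:=w^2\,dx=|x|^{-2\sigma}\,dx$, associated with the Dirichlet form $u\mapsto\int|\na u|^2\,d\mu$. Identifying the Friedrichs extension of $\La$ with the conjugate of this form yields $p_t(x,y)=w(x)w(y)\,q_t(x,y)$, where $q_t$ is the heat kernel of $e^{-tL}$ on $(\Rd,d\mu)$. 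Because $\sigma\le\f{d-2}{2}<\f{d}{2}$ always, $\mu$ is a doubling measure with $\mu(B(x,\sq t))\sim t^{d/2}\big(|x|\vee\sq t\big)^{-2\sigma}$ (using $\sigma<\f{d}{2}$); and when moreover $\sigma>-\f{d}{2}$ the weight $w^2=|x|^{-2\sigma}$ lies in $A_2$, so $L$ is a degenerate elliptic operator in divergence form with an $A_2$ weight. For such operators the Fabes--Kenig--Serapioni regularity theory together with the Davies--Gaffney/Grigor'yan--Saloff-Coste machinery gives the Gaussian upper bound $q_t(x,y)\lesi\big(\mu(B(x,\sq t))\,\mu(B(y,\sq t))\big)^{-1/2}e^{-|x-y|^2/ct}$. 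Inserting the volume estimate and multiplying back by $w(x)w(y)=(|x||y|)^{-\sigma}$ then gives
$$
p_t(x,y)\lesi t^{-d/2}\Big(1\vee\f{\sq t}{|x|}\Big)^\sigma\Big(1\vee\f{\sq t}{|y|}\Big)^\sigma e^{-|x-y|^2/ct},
$$
which is the asserted bound, since $(1\vee s)^\sigma\sim(1+s)^\sigma$ for each fixed exponent $\sigma$ and $s>0$.

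The main obstacle is the remaining range in which $w^2\notin A_2$. As $\sigma\le\f{d-2}{2}<\f{d}{2}$ always holds, this occurs precisely when $\sigma<-\f{d}{2}$, i.e.\ for large positive $a$; there $w^{-2}=|x|^{2\sigma}$ fails to be locally integrable and the $A_2$ theory does not apply directly. One remedy uses that for $a>0$ the semigroup $e^{-t\La}$ is positivity preserving and dominated pointwise by the free heat semigroup, which settles all of the region where $|x|$ and $|y|$ are $\gtrsim\sq t$; for the decay as $|x|\to 0$ one may either replace $w$ by a weight that equals $|x|^{-\sigma}$ for $|x|\le1$ and is constant for $|x|\ge2$ (a change that produces only a bounded, compactly supported perturbation, absorbed through Duhamel's formula), or abandon weights in favour of the spherical-harmonic decomposition $\La=\bigoplus_k\La_k$, where $\La_k=-\partial_r^2-\f{d-1}{r}\partial_r+\f{\nu_k^2-(\f{d-2}{2})^2}{r^2}$ with $\nu_k^2=\big(\f{d-2}{2}\big)^2+k(k+d-2)+a$ and whose heat kernel is the explicit $\f{1}{2t}(rs)^{-(d-2)/2}I_{\nu_k}(rs/2t)\,e^{-(r^2+s^2)/4t}$. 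Summing this series against the zonal spherical harmonics and using $I_\nu(z)\sim\Gamma(\nu+1)^{-1}(z/2)^\nu$ as $z\to0$, $I_\nu(z)\sim e^z(2\pi z)^{-1/2}$ as $z\to\infty$, and $|Z_k(\omega,\omega')|\lesi\dim\mathcal{H}_k\sim k^{d-2}$, produces the bound directly and uniformly in $a\ge-(\f{d-2}{2})^2$. The delicate point of this route, and where the real work lies, is obtaining bounds for the $I_{\nu_k}$ that are uniform in $k$, and then summing the resulting angular series --- this forces one to play the Gaussian factors $e^{-(r^2+s^2)/4t}$ off against the polynomial growth in $k$ of the zonal harmonics.
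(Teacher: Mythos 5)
The paper does not prove this statement; Theorem~\ref{thm-heatkernelLa} is quoted verbatim from the references \cite{MS, LS} and used as a black box throughout, so there is no ``paper's own proof'' to compare against, only a citation.

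That said, your intertwining argument is essentially the Milman--Semenov method (the ``desingularizing weight'' in the title of \cite{MS} is exactly your $w(x)=|x|^{-\sigma}$), and the basic steps are correct: the scaling reduction $p_t(x,y)=t^{-d/2}p_1(x/\sq t, y/\sq t)$, the verification that $\La w=0$ because $\sigma$ solves $\sigma^2-(d-2)\sigma=a$, the identity $w^{-1}\La(w\,\cdot)=-w^{-2}\di(w^2\na\,\cdot)$, the volume estimate $\mu(B(x,\sq t))\sim t^{d/2}(|x|\vee\sq t)^{-2\sigma}$, and the passage from the Gaussian bound for $q_t$ back to $p_t$. This closes the proof precisely on the range $-d/2<\sigma\le(d-2)/2$, that is $-\bigl(\f{d-2}{2}\bigr)^2\le a\le\f{d(3d-4)}{4}$, where $|x|^{-2\sigma}\in A_2$.

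The gap is in your treatment of $\sigma\le -d/2$, and your first proposed remedy does not work as written. The $A_2$ failure lives at the origin, not at infinity: for $\sigma<-d/2$ the offending factor is $w^{-2}=|x|^{2\sigma}$, whose singularity at $0$ is non-integrable. Replacing $w$ by a weight that still equals $|x|^{-\sigma}$ for $|x|\le1$ and is flattened to a constant for $|x|\ge2$ leaves the origin untouched, so the modified weight is still not $A_2$ and the Fabes--Kenig--Serapioni machinery still does not apply. Furthermore, the perturbation created by this modification is $\tilde w^{-1}\La\tilde w$, which vanishes for $|x|<1$ but equals $a/|x|^2\neq 0$ wherever $\tilde w$ is constant; that is bounded but certainly not compactly supported, so it cannot be ``absorbed through Duhamel's formula'' as quickly as you assert. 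Domination by the free heat kernel handles $|x|,|y|\gtrsim\sq t$ for $a>0$, but the extra decay $(1+\sq t/|x|)^\sigma$ with $\sigma<0$ as $|x|\to 0$ is exactly what the free kernel does not see, and your sketch does not close that region. The cited references do cover the full range $a\ge-\bigl(\f{d-2}{2}\bigr)^2$, but by arguments that do not hinge on $A_2$; your alternative route through the Hankel/spherical-harmonics expansion would also give the full range, but you yourself flag that the $k$-uniform control of the Bessel factors $I_{\nu_k}$ and the summation of the zonal series is where the real work lies, and as written that part is only a plan, not a proof.
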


The following results gives some estimates of the heat kernels $p_z(x,y)$ for $z\in \mathbb{C}_{\pi/4}:=\{z\in \mathbb{C}: |\arg z|<\pi/4\}$.
\begin{prop}\label{heatkernelestimates-halfplane}
	Let $p_z(x,y)$ be the kernels associated to the semigroups $e^{-z\La}$ with $z\in \mathbb{C}_{\pi/4}:=\{z\in \mathbb{C}: |\arg z|<\pi/4\}$. Then there exists constants $C$ and $c$ such that
	\begin{equation}\label{boundedp_t(x,y)-complex}
	|p_z(x,y)|\leq C\Big(1+\f{\sqrt{|z|}}{|x|}\Big)^\sigma\Big(1+\f{\sqrt{|z|}}{|y|}\Big)^\sigma |z|^{-d/2} e^{-\f{|x-y|^2}{c|z|}}
	\end{equation}
\end{prop}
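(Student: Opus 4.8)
The plan is to derive \eqref{boundedp_t(x,y)-complex} from the real-time Gaussian bound of Theorem \ref{thm-heatkernelLa} together with two soft facts about $\{e^{-z\La}\}$: it is analytic on $\mathbb{C}_{\pi/2}=\{\Re z>0\}$ and contractive on $L^{2}$ there (both because $\La$ is nonnegative and self-adjoint). Write $\tau=\Re z$; for $z\in\mathbb{C}_{\pi/4}$ we have $|z|/\sqrt2\le\tau\le|z|$, so $\tau\sim|z|$, and the splitting $z=\frac{\tau}{3}+w+\frac{\tau}{3}$ with $w=\frac{\tau}{3}+i\,\Im z$ places $w$ in a fixed subsector of $\mathbb{C}_{\pi/2}$ with $|w|\sim|z|$. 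Then $e^{-z\La}=e^{-\frac{\tau}{3}\La}\,e^{-w\La}\,e^{-\frac{\tau}{3}\La}$, and since the two outer factors have kernels $p_{\tau/3}$ and $e^{-w\La}$ is bounded on $L^{2}$, for $x,y\in\Rd\setminus\{0\}$ one obtains the identity
\[
p_z(x,y)=\big\langle e^{-w\La}\,p_{\tau/3}(\cdot,y),\ p_{\tau/3}(x,\cdot)\big\rangle_{L^{2}(\Rd)},
\]
which in particular shows a posteriori that $e^{-z\La}$ has a genuine measurable kernel.

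Next I would record weighted $L^{2}$ estimates for the real-time kernel. From Theorem \ref{thm-heatkernelLa}, Lemma \ref{lem1-Tt}(a), and the fact that $2\sigma<d$ (which always holds since $\sigma\le\frac{d-2}{2}$), one gets $\|p_t(x,\cdot)\|_{L^{2}(\Rd)}\lesssim(1+\sqrt t/|x|)^{\sigma}t^{-d/4}$ and, more precisely, the tail bound $\|\mathbf{1}_{B(x,\rho)^{c}}\,p_t(x,\cdot)\|_{L^{2}(\Rd)}\lesssim(1+\sqrt t/|x|)^{\sigma}t^{-d/4}e^{-c\rho^{2}/t}$ for all $\rho>0$, and likewise in the second variable. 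Since $\tau\sim|z|$ and $(1+c\sqrt{|z|}/|x|)^{\sigma}\sim(1+\sqrt{|z|}/|x|)^{\sigma}=:W(x)$, plugging $\|e^{-w\La}\|_{L^{2}\to L^{2}}\le1$ and these bounds into the displayed identity immediately gives the on-diagonal estimate $|p_z(x,y)|\lesssim W(x)W(y)|z|^{-d/2}$, which already proves \eqref{boundedp_t(x,y)-complex} when $|x-y|\le\sqrt{|z|}$.

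For the Gaussian decay, assume $r:=|x-y|\ge\sqrt{|z|}$ and split $p_{\tau/3}(\cdot,y)=g^{\mathrm{in}}+g^{\mathrm{out}}$ and $p_{\tau/3}(x,\cdot)=h^{\mathrm{in}}+h^{\mathrm{out}}$ using the balls $B(y,r/3)$ and $B(x,r/3)$. Any of the three terms in the expansion of $p_z(x,y)$ carrying an ``out'' factor is bounded by Cauchy--Schwarz, $\|e^{-w\La}\|_{L^{2}\to L^{2}}\le1$ and the tail bound above, producing the factor $e^{-cr^{2}/|z|}$. The remaining term $\langle e^{-w\La}g^{\mathrm{in}},h^{\mathrm{in}}\rangle$ has $g^{\mathrm{in}}$ and $h^{\mathrm{in}}$ supported at distance $\ge r/3$, so it requires a complex-time Davies--Gaffney estimate $\|\mathbf{1}_{F}\,e^{-w\La}\mathbf{1}_{E}\|_{L^{2}\to L^{2}}\lesssim e^{-c\,d(E,F)^{2}/|w|}$ valid for $w$ in a fixed subsector of $\mathbb{C}_{\pi/2}$; combined with the $L^{2}$ bounds on $g^{\mathrm{in}},h^{\mathrm{in}}$ this finishes the estimate, and hence \eqref{boundedp_t(x,y)-complex}.

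The technical heart, and the only genuinely complex-time step, is this Davies--Gaffney bound. Its real-time version is free: applying Theorem \ref{thm-Tt} to $T_t=e^{-t\La}$ with $\alpha=\beta=\sigma$ and $p=q=2$ (admissible because $2\sigma<d$ forces $2\in(d'_{\sigma},d_{\sigma})$) yields $\|\mathbf{1}_{F}\,e^{-t\La}\mathbf{1}_{E}\|_{L^{2}\to L^{2}}\lesssim e^{-d(E,F)^{2}/ct}$. To pass to complex $w$ one argues in the standard way: for $f\in L^{2}(E)$, $g\in L^{2}(F)$ with $d(E,F)=r$, the function $\Phi(w)=e^{\kappa r^{2}/w}\langle e^{-w\La}f,g\rangle$ is holomorphic on $\Re w>0$, is bounded by $\|f\|_2\|g\|_2$ on the positive real axis (for $\kappa$ small, by the real-time bound) and on the imaginary axis (by $L^{2}$-contractivity), and a Phragm\'en--Lindel\"of argument on the subsector gives the same bound throughout, i.e. the claimed estimate; alternatively one may use finite speed of propagation for $\cos(\xi\sqrt\La)$ together with the subordination formula $e^{-w\La}=(4\pi w)^{-1/2}\int_{\mathbb R}e^{-\xi^{2}/(4w)}\cos(\xi\sqrt\La)\,d\xi$. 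I expect this to be the main obstacle; the rest is bookkeeping with the weights, in which the inequality $2\sigma<d$ is exactly what keeps the integrals $\int_{|u|<\rho}|u|^{-2\sigma}\,du$ finite.
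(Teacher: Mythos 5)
Your proof is correct, and it takes a genuinely different---and in one respect more complete---route than the paper's. The common core is the idea of splitting $z$ into real-time factors (where the bounds of Theorem~\ref{thm-heatkernelLa} apply) and a middle complex-time factor controlled by $L^{2}$-contractivity. The paper implements this via the duality scheme $L^{1}_{w^{-1}}\to L^{2}\to wL^{\infty}$: writing $z=t+is+t$, it factors the operator norm as $\|e^{-t\La}\|_{L^{2}\to wL^{\infty}}\|e^{-is\La}\|_{L^{2}\to L^{2}}\|e^{-t\La}\|_{L^{1}_{w^{-1}}\to L^{2}}$, each real-time piece being $\lesssim t^{-d/4}$, which yields only the \emph{on-diagonal} bound $|p_{z}(x,y)|\lesssim W(x)W(y)|z|^{-d/2}$; the Gaussian factor $e^{-|x-y|^{2}/(c|z|)}$ is not addressed at all, the authors tacitly invoking the Davies perturbation trick (conjugation by bounded exponentials, as in \cite{Da}) to upgrade it. You instead represent $p_{z}(x,y)$ as an $L^{2}$ pairing $\langle e^{-w\La}p_{\tau/3}(\cdot,y),p_{\tau/3}(x,\cdot)\rangle$, obtain the on-diagonal bound from $\|p_{t}(\cdot,y)\|_{L^{2}}\lesssim(1+\sqrt t/|y|)^{\sigma}t^{-d/4}$ (which indeed uses $2\sigma<d$, always true since $\sigma\le\frac{d-2}{2}$), and then actually \emph{prove} the Gaussian decay by splitting each kernel slice into near/far parts and invoking a complex-time Davies--Gaffney estimate for the near/near piece. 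So your argument supplies precisely the ingredient the paper leaves implicit.

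One caution on your Phragm\'en--Lindel\"of variant: the auxiliary function $\Phi(w)=e^{\kappa r^{2}/w}\langle e^{-w\La}f,g\rangle$ is not bounded near $w=0$ in the open right half-plane (it grows like $e^{\kappa r^{2}/\Re w}$ approaching $0$ tangentially to the real axis), so the maximum-principle step needs either a truncation to an annular sector with the rays at fixed angles as the ``sides'' and a separate treatment of the small-circle arc, or Davies' own exponential-weight conjugation. The finite-speed-of-propagation route you also offer---subordinating $e^{-w\La}$ to $\cos(\xi\sqrt{\La})$ and using that the real-time Gaussian bound forces unit propagation speed---is cleaner and is the standard way the complex-time Davies--Gaffney estimate is established in the literature (cf.\ Carron--Coulhon--Ouhabaz); I would keep that as the primary justification.
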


\begin{proof} We adapt the standard argument in \cite{Da} to our present situation.

It suffices to claim that
\begin{equation}\label{boundedp_t(x,y)-complex1'}
\left|w(x) p_z(x,y)w(y)\right|\leq \f{C}{|z|^{d/2}},
\end{equation}
where $w(x)=\Big(1+\f{\sqrt{|z|}}{|y|}\Big)^{-\sigma}$

Now for $f: \mathbb{R}^d\rightarrow \mathbb{R}$, we define the norm
$$
|f|_{wL^\vc}=\sup_{x} \left|f(x)w(x)\right|.
$$
Hence (\ref{boundedp_t(x,y)-complex1'}) is equivalent to that
$$
\|e^{-z\La}\|_{L^1_{w^{-1}}\rightarrow wL^\vc}\leq \f{C}{|z|^{d/2}}.
$$
Assume that $z=2t+is$ where $t\geq 0$ and $s\in \mathbb{R}$. Due to $|\arg z|<\pi/4$, we have $t\sim |z|$. Hence
$$
\|e^{-z\La}\|_{L^1_{w^{-1}}\rightarrow wL^\vc}\leq \|e^{-t\La}\|_{L^2\rightarrow wL^\vc}\|e^{-is\La}\|_{L^2\rightarrow L^2}\|e^{-t\La}\|_{L^1_{w^{-1}}\rightarrow L^2}.
$$
Since $\La$ is nonnegative and self-adjoint,  $\|e^{-is\La}\|_{L^2\rightarrow L^2}\leq 1$.  We now claim that
$$
 \|e^{-t\La}\|_{L^1_{w^{-1}}\rightarrow L^2} \lesi t^{-d/4}\ \ \ \text{and} \ \ \ \ \|e^{-t\La}\|_{L^2\rightarrow wL^\vc}\lesi t^{-d/4}.
$$
We now show $\|e^{-t\La}\|_{L^1_{w^{-1}}\rightarrow L^2} \lesi t^{-d/4}$. The inequality
$\|e^{-t\La}\|_{L^2\rightarrow wL^\vc}\lesi t^{-d/4}$ can be done in the same manner. Indeed, for $f\in L^1_{w^{-1}}$ we have
$$
\begin{aligned}
\|e^{-t\La}f\|_{L^2}&\lesi \left(\int_{\mathbb{R}^d}\left|\int_{\mathbb{R}^d} \Big(1+\f{\sqrt{t}}{|x|}\Big)^\sigma\Big(1+\f{\sqrt{t}}{|y|}\Big)^\sigma t^{-d/2}e^{-\f{|x-y|^2}{ct}}|f(y)|dy\right|^2dx\right)^{1/2}\\
&\lesi \int_{\mathbb{R}^d}\left(\int_{\mathbb{R}}\left| \Big(1+\f{\sqrt{t}}{|x|}\Big)^\sigma t^{-d/2}e^{-\f{|x-y|^2}{ct}}\right|^2dx\right)^{1/2}\Big(1+\f{\sqrt{t}}{|y|}\Big)^\sigma|f(y)|dy
\end{aligned}
$$
Arguing similarly to the proof of Theorem \ref{thm-Tt} we get that
$$
\left(\int_{\mathbb{R}^d}\left| \Big(1+\f{\sqrt{t}}{|x|}\Big)^\sigma t^{-d/2}e^{-\f{|x-y|^2}{ct}}\right|^2dx\right)^{1/2}\lesi t^{-d/4}.
$$
Hence,
$$
\|e^{-t\La}f\|_{L^2}\lesi t^{-d/4}\int_{\mathbb{R}^d}\Big(1+\f{\sqrt{t}}{|y|}\Big)^\sigma|f(y)|dy:=t^{-d/4}\|f\|_{L^1_{w^{-1}}}
$$
which implies
$$
\|e^{-t\La}\|_{L^1_{w^{-1}}\rightarrow L^2} \lesi t^{-d/4}.
$$
This completes our proof.

\end{proof}

As a direct consequence of Proposition \ref{heatkernelestimates-halfplane} and Cauchy formula, we obtain the following result.
\begin{prop}\label{thm-ptk}
Assume $d\geq 3$ and $a\geq -\Big(\f{d-2}{2}\Big)^2$. For any $k\in \mathbb{N}$, there exist two positive constants $C_k$ and $c_k$ such that for all $t>0$ and $x,y \in \Rd\backslash \{0\}$,
$$
|p_{t,k}(x,y)|\leq C_k\Big(1+\f{\sqrt{t}}{|x|}\Big)^\sigma\Big(1+\f{\sqrt{t}}{|y|}\Big)^\sigma t^{-(k+d/2)}e^{-\f{|x-y|^2}{c_kt}},
$$
where $p_{t,k}(x,y)$ is an associated kernel to $\La^k e^{-t\La}$.
\end{prop}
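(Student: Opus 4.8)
The plan is to represent $\La^k e^{-t\La}$ as a contour integral of the analytic semigroup $e^{-z\La}$ and then transfer the pointwise bound of Proposition \ref{heatkernelestimates-halfplane} to the $k$-th derivative. Concretely, since the semigroup $z\mapsto e^{-z\La}$ is analytic on the sector $\mathbb{C}_{\pi/4}$, the operator $\La^k e^{-t\La}$ equals $(-1)^k \frac{d^k}{dt^k} e^{-t\La}$, and the $k$-th derivative can be written by the Cauchy integral formula as
\begin{equation}\label{eq:cauchy-ptk}
\La^k e^{-t\La} = \frac{(-1)^k k!}{2\pi i}\oint_{|z-t|=t/2} \frac{e^{-z\La}}{(z-t)^{k+1}}\,dz.
\end{equation}
Passing to kernels, $p_{t,k}(x,y) = \frac{(-1)^k k!}{2\pi i}\oint_{|z-t|=t/2} \frac{p_z(x,y)}{(z-t)^{k+1}}\,dz$, where the circle $|z-t|=t/2$ lies inside $\mathbb{C}_{\pi/4}$ and hence Proposition \ref{heatkernelestimates-halfplane} applies on it.

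The key step is then a routine majorization on the contour. For $z$ with $|z-t|=t/2$ one has $|z|\sim t$ and $\operatorname{Re}z \ge t/2$, so from \eqref{boundedp_t(x,y)-complex} we get
$$
|p_z(x,y)| \le C\Big(1+\f{\sqrt{t}}{|x|}\Big)^\sigma\Big(1+\f{\sqrt{t}}{|y|}\Big)^\sigma t^{-d/2} e^{-\f{c'|x-y|^2}{t}}
$$
uniformly in $z$ on the circle (absorbing the constants coming from $|z|\sim t$ into $C,c'$). Since $|z-t|^{-(k+1)} = (t/2)^{-(k+1)}$ on the contour and the arc length is $\pi t$, integrating gives
$$
|p_{t,k}(x,y)| \le \frac{k!}{2\pi}\cdot \pi t\cdot (t/2)^{-(k+1)}\cdot C\Big(1+\f{\sqrt{t}}{|x|}\Big)^\sigma\Big(1+\f{\sqrt{t}}{|y|}\Big)^\sigma t^{-d/2} e^{-\f{c'|x-y|^2}{t}},
$$
which is exactly the claimed bound with $C_k = C\,k!\,2^{k}$ and $c_k = 1/c'$.

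The only point that needs a word of care — and what I expect to be the main (minor) obstacle — is justifying the kernel identity: one must check that the Bochner/contour integral in \eqref{eq:cauchy-ptk} converges in a strong enough operator topology (e.g. as operators $L^1_{w^{-1}}\to wL^\infty$, using the quantitative bound $\|e^{-z\La}\|_{L^1_{w^{-1}}\to wL^\infty}\lesssim |z|^{-d/2}$ established in the proof of Proposition \ref{heatkernelestimates-halfplane}) and that it legitimately represents the densely-defined operator $\La^k e^{-t\La}$, so that one may pass freely between the operator identity and the pointwise kernel identity. This is standard analytic-semigroup theory; once it is in place the estimate above is immediate. An alternative, avoiding contour integration altogether, is to prove the $k=1$ case directly from $\La e^{-t\La} = -\frac{d}{dt}e^{-t\La}$ together with the identity $\La e^{-t\La} = (\La e^{-(t/2)\La})\circ e^{-(t/2)\La}$ and then iterate, using Proposition \ref{thm-Tt}-type Gaussian composition bounds; but the Cauchy-formula route gives all $k$ at once and is cleaner.
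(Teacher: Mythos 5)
Your proof is correct and follows exactly the route the paper indicates: the statement is declared there to be ``a direct consequence of Proposition \ref{heatkernelestimates-halfplane} and Cauchy formula,'' which is precisely the Cauchy integral representation of $\La^k e^{-t\La}$ on a circle inside the sector $\mathbb{C}_{\pi/4}$ combined with the complex-time kernel bound. The contour majorization you carry out is the expected routine verification.
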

\section{Riesz transforms and smoothing estimates}
\subsection{Weighted Hardy inequalities}
The Hardy inequality for Laplacian $-\Delta$ was studied in \cite{T} and then was generalized for Schr\"odinger operators $\La$ in \cite{K.et.al}. In this section, we extend to the weighted Hardy inequalities for  $\La$.
\begin{thm}\label{thm-HardyIneq}
Suppose $0 <s<d, d-s-2\sigma >
0$, and $d_\sigma'<p<d_{s+\sigma}$. Then for $w\in A_{\f{p}{d'_\sigma}}\cap RH_{\left(\f{d_{s+\sigma}}{p}\right)'}$ we have
$$
\||x|^{-s}f\|_{L^p_w(\Rd)}\lesi \|\La^{s/2}f\|_{L^p_w(\Rd)}.
$$
\end{thm}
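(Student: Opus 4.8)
The plan is to write $\La^{-s/2}$ as a fractional integral via the subordination formula
$$
\La^{-s/2} = \frac{1}{\Gamma(s/2)}\int_0^\infty t^{s/2-1}e^{-t\La}\,dt,
$$
and then establish that the operator $T := |x|^{-s}\La^{-s/2}$ is bounded on $L^p_w$ for the stated range of $p$ and weights $w$. Setting $g=\La^{s/2}f$, the desired inequality is exactly $\||x|^{-s}\La^{-s/2}g\|_{L^p_w}\lesssim\|g\|_{L^p_w}$, so it suffices to bound $T$. The kernel of $T$ is
$$
K(x,y)=\frac{|x|^{-s}}{\Gamma(s/2)}\int_0^\infty t^{s/2-1}p_t(x,y)\,dt,
$$
and I would first derive pointwise bounds on $K$ from the heat kernel estimate in Theorem \ref{thm-heatkernelLa}. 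Inserting that bound and splitting the $t$-integral at $t\sim|x-y|^2$ (and keeping track of whether $\sqrt t$ exceeds $|x|$ and $|y|$), a direct computation should give something like
$$
|K(x,y)|\lesssim \frac{|x|^{-s}}{|x-y|^{d-s}}\Big(1+\frac{|x-y|}{|x|}\Big)^{\sigma}\Big(1+\frac{|x-y|}{|y|}\Big)^{\sigma},
$$
up to the usual care near the origin; the exponents $d'_\sigma<p<d_{s+\sigma}$ and the condition $d-s-2\sigma>0$ are precisely what make the relevant local integrals of $|K|$ finite.

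To get the weighted bound with the sharp weight class $A_{p/d'_\sigma}\cap RH_{(d_{s+\sigma}/p)'}$, I would not try to push $T$ into Calderón–Zygmund theory (the heat kernel of $\La$ has no pointwise gradient bound), but instead apply Theorem \ref{BZ-thm} with $p_0=d'_\sigma$ and $q_0$ chosen slightly below $d_{s+\sigma}$, using the approximation operators $\mathcal A_t := I-(I-e^{-t\La})^{N}$ for a large integer $N$. The off-diagonal/smoothing inputs \eqref{eq1-BZ} and \eqref{eq1-BZ-bis} would be verified using Theorem \ref{thm-Tt} applied to the family $T_t:=t^k\La^k e^{-t\La}$, whose kernels satisfy the required Gaussian-type bounds by Proposition \ref{thm-ptk} (this is the role of the weight $(1+\sqrt t/|x|)^\sigma$ factors): the $L^{p_0}\to L^{q_0}$ off-diagonal decay of $\La^k e^{-t\La}$ gives coefficients $\alpha(j)$ with arbitrarily fast decay in $j$, so the summability condition $\sum_j\alpha(j)2^{jd}<\infty$ is easily met. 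One also needs $T$ bounded on $L^{q_0}$; this follows from the unweighted ($w\equiv1$) case of the statement, which in turn can be obtained from the kernel bound on $K$ above together with the weighted-boundedness machinery applied with power weights, or directly from the results of \cite{K.et.al} combined with the plain Hardy inequality $\||x|^{-s}h\|_{L^{q_0}}\lesssim\|(-\Delta)^{s/2}h\|_{L^{q_0}}$ of \cite{T}.

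The main obstacle I anticipate is bookkeeping rather than conceptual: tracking the four regimes in the $t$-integral defining $K$ (namely $\sqrt t$ small or large compared to each of $|x|$, $|y|$) so as to land on exactly the exponent constraints $d'_\sigma<p<d_{s+\sigma}$ and $d-s-2\sigma>0$, and — more seriously — checking the two hypotheses of Theorem \ref{BZ-thm} with uniformity in the ball $B$. The delicate point in \eqref{eq1-BZ} is that $T(I-\mathcal A_{r_B})=T(I-e^{-r_B^2\La})^N=c\int_0^{r_B^2}\!\!\cdots\int_0^{r_B^2}\La^N e^{-(t_1+\cdots+t_N+ r_B^2\Delta_{\text{extra}})\La}\,dt\cdots$ composed with $|x|^{-s}\La^{-s/2}$, so one must combine the fractional-integral kernel bound for $|x|^{-s}\La^{-s/2-N}$ (again from Proposition \ref{thm-ptk} with $k=N$, integrated) with the $L^{p_0}\to L^{q_0}$ off-diagonal estimates, choosing $N$ large enough that the tail decay beats the $2^{jd}$ growth while the exponent $-s/2-N$ still keeps all $t$-integrals convergent near $t=0$; this is where the constraint $d'_\sigma<p$ (equivalently $\sigma$ not too large relative to $d$) is used. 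Once these estimates are in place, Theorem \ref{BZ-thm} delivers boundedness of $T$ on $L^p_w$ for $p\in(p_0,q_0)$ and $w\in A_{p/p_0}\cap RH_{(q_0/p)'}$; taking the union over admissible $p_0\downarrow d'_\sigma$ and $q_0\uparrow d_{s+\sigma}$ and using the openness properties in Lemma \ref{weightedlemma1}(iv)--(v) recovers the full stated weight class, completing the proof.
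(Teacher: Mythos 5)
Your proposal follows essentially the same route as the paper: reduce to boundedness of $T=|x|^{-s}\La^{-s/2}$, apply Theorem \ref{BZ-thm} with $\mathcal{A}_{r_B}=I-(I-e^{-r_B^2\La})^m$, verify the hypotheses by splitting the subordination integral at $t=r_B^2$ and combining the heat kernel bound of Theorem \ref{thm-heatkernelLa}, Proposition \ref{thm-ptk}, and the off-diagonal machinery of Theorem \ref{thm-Tt}, and cite \cite{K.et.al} for the unweighted $L^{q_0}$ endpoint. The preliminary pointwise bound on the kernel $K(x,y)$ you sketch is an unnecessary detour the paper avoids: the off-diagonal hypotheses \eqref{eq1-BZ}--\eqref{eq1-BZ-bis} are obtained directly from kernel bounds on $|x|^{-s}\La^k e^{-t\La}$ fed into Theorem \ref{thm-Tt}, without ever assembling a closed-form bound on $K$ itself.
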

\begin{proof}
It suffices to prove that
$$
\||x|^{-s}\La^{-s/2}g\|_{L^p_w(\Rd)}\lesi \|g\|_{L^p_w(\Rd)}.
$$
for $d_\sigma'<p<d_{s+\sigma}$ and $w\in A_{\f{p}{d'_{\sigma}}}\cap RH_{\left(\f{d_{s+\sigma}}{p}\right)'}$. To do so we shall apply Theorem \ref{BZ-thm}.

We define a linear operator
$$
T_{\La, s}f(x)=|x|^{-s}\La^{-s/2}f(x).
$$
Fix $p\in (d'_{\sigma},d_{s+\sigma})$ and $w\in A_{\f{p}{d'_{\sigma}}}\cap RH_{\left(\f{d_{s+\sigma}}{p}\right)'}$. Then we can find $d'_{\sigma}<p_1<p<p_2<d_{s+\sigma}$ so that
$w\in A_{\f{p}{p_1}}\cap RH_{\left(\f{p_2}{p}\right)'}$.

We now fix a ball $B\subset \Rd$ and $m>d+d/p_2$. For any function $f$ supported in $B$ we claim that
\begin{equation}\label{eq1-thmHardyIneq}
\Big(\int_{S_j(B)}|T_{\La, s}(I-e^{-r_B^2\La})^mf(x)|^{p_2}dx\Big)^{1/p_2}\lesi 2^{-2mj}\Big(\int_B|f(x)|^{p_2}dx\Big)^{1/p_2}.
\end{equation}
Indeed, using the formula
$$
\La^{-s/2}=\f{1}{\Gamma(s/2)}\int_0^\vc t^{s/2}e^{-t\La}\f{dt}{t}
$$
to obtain that
\begin{equation}\label{eq2-thmHardyIneq}
\begin{aligned}
T_{\La, s}(I-e^{-r_B^2\La})^mf(x)&=\f{1}{\Gamma(s/2)}\int_0^\vc t^{s/2}|x|^{-s}e^{-t\La}(I-e^{-r_B^2\La})^mf(x)\f{dt}{t}.
\end{aligned}
\end{equation}
This along with Minkowski's inequality implies that
$$
\begin{aligned}
\|T_{\La, s}(I-e^{-r_B^2\La})^mf\|_{L^{p_2}(S_j(B))}&\lesi \f{1}{\Gamma(s/2)}\int_0^\vc t^{s/2}\|\,|x|^{-s}e^{-t\La}(I-e^{-r_B^2\La})^mf(x)\|_{L^{p_2}(S_j(B))}\f{dt}{t}\\
&\lesi \f{1}{\Gamma(s/2)}\int_0^{r_B^2} t^{s/2}\|\,|x|^{-s}e^{-t\La}(I-e^{-r_B^2\La})^mf(x)\|_{L^{p_2}(S_j(B))}\f{dt}{t}\\
&\quad \quad +\f{1}{\Gamma(s/2)}\int_{r_B^2}^\vc t^{s/2}\|\,|x|^{-s}e^{-t\La}(I-e^{-r_B^2\La})^mf(x)\|_{L^{p_2}(S_j(B))}\f{dt}{t}\\
&\lesi E_1 + E_2.
\end{aligned}
$$
We first take care of $E_1$. Observe that
\begin{equation}\label{eq-formulaE1}
E_1\leq \sum_{k=0}^m C^m_k\int_0^{r_B^2} t^{s/2}\|\,|x|^{-s}e^{-(t+kr_B^2)\La}f(x)\|_{L^{p_2}(S_j(B))}\f{dt}{t}.
\end{equation}
Note that the associated kernel of the linear operator $f(x) \mapsto |x|^{-s}e^{-(t+kr_B^2)\La}f(x)$ is given by
$ |x|^{-s}p_{t+kr_B^2}(x,y)$ and by Theorem \ref{thm-heatkernelLa} it is dominated by
$$
\begin{aligned}
|x|^{-s}&\Big(1+\f{\sqrt{t+kr_B^2}}{|x|}\Big)^\sigma\Big(1+\f{\sqrt{t+kr_B^2}}{|y|}\Big)^\sigma (t+kr_B^2)^{-d/2}e^{-\f{|x-y|^2}{c(t+kr_B^2)}}\\
&\leq (t+kr_B^2)^{-s/2}\Big(1+\f{\sqrt{t+kr_B^2}}{|x|}\Big)^{\sigma+s}\Big(1+\f{\sqrt{t+kr_B^2}}{|y|}\Big)^\sigma (t+kr_B^2)^{-d/2}e^{-\f{|x-y|^2}{c(t+kr_B^2)}}.
\end{aligned}
$$
Therefore, applying Theorem \ref{thm-Tt} we get that
$$
\|\,|x|^{-s}e^{-(t+kr_B^2)\La}f(x)\|_{L^{p_2}(S_j(B))}\lesi (t+kr_B^2)^{-s/2}e^{-\f{4^jr_B^2}{c(t+kr_B^2)}}\Big(\int_B|f|^{p_2}\Big)^{1/p_2}.
$$
Inserting this into \eqref{eq-formulaE1} to obtain that
$$
\begin{aligned}
E_1&\lesi \sum_{k=0}^m \int_0^{r_B^2} t^{s/2}(t+kr_B^2)^{-s/2}e^{-\f{4^jr_B^2}{c(t+kr_B^2)}}\f{dt}{t}\times \Big(\int_B|f|^{p_2}\Big)^{1/p_2}\\
&\lesi \sum_{k=0}^m \int_0^{r_B^2} t^{s/2}(t+kr_B^2)^{-s/2}\Big(\f{t+kr_B^2}{4^jr_B^2}\Big)^m \f{dt}{t}\times \Big(\int_B|f|^{p_2}\Big)^{1/p_2}\\
&\lesi 2^{-2mj}\Big(\int_B|f|^{p_2}\Big)^{1/p_2}.
\end{aligned}
$$
To estimate $E_2$, we note that
\begin{equation}\label{eq2-squarefunction}
(I-e^{-r_B^2\La})^m=\int_0^{r_B^2}\dots \int_0^{r_B^2} \La^me^{-(s_1+\dots+s_m)\La}d\vec{s},
\end{equation}
where $d\vec{s}=ds_1\dots ds_m$.

Hence, the associated kernel to the linear operator $f(x)\mapsto |x|^{-s}e^{-t\La}(I-e^{-r_B^2\La})^mf(x)$ is given by
$$
\int_0^{r_B^2}\dots \int_0^{r_B^2} |x|^{-s} p_{t+s_1+\dots+s_m, m} (x,y)d\vec{s},
$$
and hence by using Theorem \ref{thm-ptk} we can dominate it by
$$
\begin{aligned}
|x|^{-s}t^{-m}&\Big(1+\f{\sqrt{t+s_1+\dots +s_m}}{|x|}\Big)^\sigma\Big(1+\f{\sqrt{t+s_1+\dots +s_m}}{|y|}\Big)^\sigma (t+s_1+\dots +s_m)^{-d/2}e^{-\f{|x-y|^2}{c(t+s_1+\dots +s_m)}}\\
&\lesi |x|^{-s}t^{-m}\Big(1+\f{\sqrt{t}}{|x|}\Big)^\sigma\Big(1+\f{\sqrt{t}}{|y|}\Big)^\sigma t^{-d/2}e^{-\f{|x-y|^2}{c(t)}}\\
&\lesi t^{-(s/2+m)}\Big(1+\f{\sqrt{t}}{|x|}\Big)^{\sigma+s}\Big(1+\f{\sqrt{t}}{|y|}\Big)^\sigma t^{-d/2}e^{-\f{|x-y|^2}{c(t)}},
\end{aligned}
$$
where in the first inequality we used the fact that $t+s_1+\dots +s_m\thicksim t$ for $t\geq r_B^2$ and $s_i\in (0,r_B^2], i=1,\dots, m.$

This in combination with Theorem \ref{thm-Tt} implies that
$$
\|\,|x|^{-s}e^{-t\La}(I-e^{-r_B^2\La})^mf(x)\|_{L^{p_2}(S_j(B))}\lesi t^{-(s/2+m)}e^{-\f{4^jr_B^2}{ct}}\Big(\int_B|f|^{p_2}\Big)^{1/p_2}
$$
Inserting this into the expression of $E_2$ to get that
$$
\begin{aligned}
E_2&\lesi \int_{r_B^2}^\vc t^{s/2} t^{-(s/2+m)}e^{-\f{4^jr_B^2}{ct}}\f{dt}{t}\times \Big(\int_B|f|^{p_2}\Big)^{1/p_2}\\
&\lesi \int_{r_B^2}^\vc t^{s/2} t^{-(s/2+m)}\Big(\f{t}{4^jr_B^2}\Big)^m\f{dt}{t}\times \Big(\int_B|f|^{p_2}\Big)^{1/p_2}\\
&\lesi 2^{-2mj}\Big(\int_B|f|^{p_2}\Big)^{1/p_2}.
\end{aligned}
$$
From the estimates of $E_1$ and $E_2$ we conclude \eqref{eq1-thmHardyIneq}.

With estimate \eqref{eq1-thmHardyIneq} in hand, we can now complete the proof of the theorem. First note that it was proved in \cite{K.et.al} that $T_{\La, s}$ is bounded on $L^p$ for  $p\in (d'_{\sigma},d_{s+\sigma})$. For each ball $B$, we now set
$$
\mathcal{A}_{r_B}=I-(I-e^{-r_B^2\La})^m.
$$
Then from \eqref{eq1-thmHardyIneq}, we conclude that
	`\begin{equation}\label{eq1-Hardy}
	\Big(\fint_{S_j(B)}|T_{\La, s}(I-\mathcal{A}_{r_B})f(x)|^{p_2}dx\Big)^{1/p_2}\lesi 2^{-j(2m-d/p_2)}\Big(\fint_B|f(x)|^{p_2}dx\Big)^{1/p_2}.
	\end{equation}
On the other hand from Theorem \ref{thm-Tt} and Proposition \ref{thm-heatkernelLa} we imply that
`\begin{equation}
\Big(\int_{S_j(B)}|\mathcal{A}_{r_B}f(x)|^{p_2}dx\Big)^{1/p_2}\lesi e^{-c2^{2j}}|B|^{\f{1}{p_1}-\f{1}{p_2}} \Big(\int_B|f(x)|^{p_1}dx\Big)^{1/p_1}
\end{equation}
which implies that
`\begin{equation}\label{eq2-Hardy}
\Big(\fint_{S_j(B)}|\mathcal{A}_{r_B}f(x)|^{p_2}dx\Big)^{1/p_2}\lesi 2^{-j(2m-d/p_2)}\Big(\fint_B|f(x)|^{p_1}dx\Big)^{1/p_1}.
\end{equation}
From \eqref{eq1-Hardy}, \eqref{eq2-Hardy} and Theorem \ref{BZ-thm} we obtained the desired result.
\end{proof}

\subsection{Weighted estimates for square functions}

Let $\alpha\in (0,1)$ we consider the following square function
$$
S_{\La,\alpha}f(x)=\Big(\int_0^\vc|(t\La)^{1-\alpha}e^{-t\La}f|^2\f{dt}{t}\Big)^{1/2}.
$$
Note that by functional calculus theory in \cite{Mc}, the square function  $S_{\La,\alpha}$ is bounded on $L^2$. In the following theorem, we prove the weighted $L^p$ estimates for $S_{\La,\alpha}$.
\begin{thm}\label{thm-square}
Suppose that $d\geq 3$, $a\geq -\left(\f{d-2}{2}\right)^2$ and  $\alpha\in (0,1)$. Then for all $d_\sigma'<p<d_\sigma$ and $w\in A_{\f{p}{d_\sigma'}}\cap RH_{(\f{d_\sigma}{p})'}$ we have
$$
\|S_{\La,\alpha}f\|_{L^p_w}\sim\|f\|_{L^p_w}.
$$
As a consequence, for $0<s<2$, $d_\sigma'<p<d_\sigma$ and $w\in A_{\f{p}{d_\sigma'}}\cap RH_{(\f{d_\sigma}{p})'}$ we have
$$
\left\|\Big(\int_0^\vc t^{-s}|t\La e^{-t\La}f|^2\f{dt}{t}\Big)^{1/2}\right\|_{L^p_w}\sim \|\La^{s/2} f\|_{L^p_w}.
$$
\end{thm}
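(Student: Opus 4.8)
The plan is to reduce both statements to the two-sided bound $\|S_{\La,\alpha}f\|_{L^p_w}\sim\|f\|_{L^p_w}$. Granting this, fix $0<s<2$, put $g=\La^{s/2}f$ (working on a class dense enough that $f=\La^{-s/2}g$) and use the algebraic identity $t\La e^{-t\La}\La^{-s/2}=t^{s/2}(t\La)^{1-s/2}e^{-t\La}$, which gives $t^{-s}|t\La e^{-t\La}f|^2=|(t\La)^{1-s/2}e^{-t\La}g|^2$, hence
\begin{equation*}
\Big(\int_0^\vc t^{-s}\,|t\La e^{-t\La}f|^2\,\frac{dt}{t}\Big)^{1/2}=S_{\La,s/2}\,g .
\end{equation*}
Since $s/2\in(0,1)$, the first part with $\alpha=s/2$ yields $\|S_{\La,s/2}g\|_{L^p_w}\sim\|g\|_{L^p_w}=\|\La^{s/2}f\|_{L^p_w}$, which is the displayed equivalence. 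So it suffices to treat $S_{\La,\alpha}$, $\alpha\in(0,1)$, under the stated hypotheses on $p$ and $w$.

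For the upper bound $\|S_{\La,\alpha}f\|_{L^p_w}\lesi\|f\|_{L^p_w}$ I would apply Theorem~\ref{BZ-thm} to the (linearizable) square function $S_{\La,\alpha}$ with $\mathcal{A}_{r_B}=I-(I-e^{-r_B^2\La})^m$, $m$ a large integer. Given $p\in(d_\sigma',d_\sigma)$ and $w\in A_{p/d_\sigma'}\cap RH_{(d_\sigma/p)'}$, the openness of the weight classes (Lemma~\ref{weightedlemma1}(iv)--(v)) produces $d_\sigma'<p_0<p<q_0<d_\sigma$ with $w\in A_{p/p_0}\cap RH_{(q_0/p)'}$, and $S_{\La,\alpha}$ is bounded on $L^{q_0}$ — indeed, starting from the $L^2$-bound of \cite{Mc} and the off-diagonal estimates below, Theorem~\ref{Martell-thm} together with a first (unweighted) application of Theorem~\ref{BZ-thm} gives unweighted boundedness on all of $L^q$, $q\in(d_\sigma',d_\sigma)$. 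The condition \eqref{eq1-BZ-bis} for $\mathcal{A}_{r_B}$ is routine: expanding $I-(I-e^{-r_B^2\La})^m$ into a combination of $e^{-kr_B^2\La}$ and using Theorem~\ref{thm-heatkernelLa} with the weighted Gaussian $L^{p_0}\!\to\!L^{q_0}$ bound of Theorem~\ref{thm-Tt} gives $\alpha(j)\sim e^{-c4^j}$. The substance is \eqref{eq1-BZ}, the off-diagonal bound for $S_{\La,\alpha}(I-e^{-r_B^2\La})^mf$ on an annulus $S_j(B)$, which I would handle exactly along the lines of the proof of Theorem~\ref{thm-HardyIneq}: split the defining $t$-integral at $t=r_B^2$; for $t\le r_B^2$ expand $(I-e^{-r_B^2\La})^m$ directly, for $t\ge r_B^2$ expand it via \eqref{eq2-squarefunction} into an integral of $\La^m e^{-(\cdot)\La}$; and eliminate the fractional factor by the subordination identity
\begin{equation*}
(t\La)^{1-\alpha}e^{-t\La}=\frac{1}{\Gamma(\alpha)}\int_0^\vc \frac{u^{\alpha-1}}{1+u}\,\big(s\La e^{-s\La}\big)\big|_{s=(1+u)t}\,du\qquad(0<\alpha<1),
\end{equation*}
which writes it as an average over $s\ge t$ of operators whose kernels obey Proposition~\ref{thm-ptk}. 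Feeding these representations into Proposition~\ref{thm-ptk} (with $k=1$ or $k=m+1$) and then Theorem~\ref{thm-Tt} slice by slice, and integrating in $t$ (and $u$) against the Gaussian factors $e^{-c4^jr_B^2/s}$ as in Theorem~\ref{thm-HardyIneq}, produces $\alpha(j)\sim 2^{-j(2m-d/q_0)}$, summable against $2^{jd}$ once $m$ is large. I expect this off-diagonal estimate to be the main obstacle: the work is purely one of bookkeeping the four scales $t$, $r_B^2$, $4^jr_B^2$ and $s=(1+u)t$ so that every application of Theorems~\ref{thm-Tt} and \ref{thm-ptk} stays within the admissible exponent range, the convergence of the $u$-integral relying on $\sigma\le\tfrac{d-2}{2}<\tfrac d2$.

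For the reverse inequality $\|f\|_{L^p_w}\lesi\|S_{\La,\alpha}f\|_{L^p_w}$ I would use a Calderón reproducing formula. With $\psi(\lambda)=\lambda^{1-\alpha}e^{-\lambda}$ one has $\int_0^\vc\psi(u)^2\,\tfrac{du}{u}=\int_0^\vc u^{1-2\alpha}e^{-2u}\,du=:c\in(0,\vc)$ (finite precisely because $\alpha<1$), so by the spectral theorem $\int_0^\vc\psi(t\La)^2\,\tfrac{dt}{t}=c\,I$. Since $\psi(t\La)$ is self-adjoint, for $f\in L^p_w\cap L^2$ and $g\in L^{p'}_{w^{1-p'}}\cap L^2$,
\begin{equation*}
|\langle f,g\rangle|=c^{-1}\Big|\int_0^\vc\!\!\int_{\Rd}\psi(t\La)f\;\psi(t\La)g\;dx\,\frac{dt}{t}\Big|\le c^{-1}\int_{\Rd}S_{\La,\alpha}f\;S_{\La,\alpha}g\;dx ,
\end{equation*}
by the Cauchy--Schwarz inequality in $t$. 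H\"older's inequality for the pair $(L^p_w,L^{p'}_{w^{1-p'}})$ and the upper bound applied to the exponent $p'$ and the weight $w^{1-p'}$ — which lies in $A_{p'/d_\sigma'}\cap RH_{(d_\sigma/p')'}$ with $d_\sigma'<p'<d_\sigma$ by Lemma~\ref{weightedlemma1} — bound the right-hand side by $C\|S_{\La,\alpha}f\|_{L^p_w}\|g\|_{L^{p'}_{w^{1-p'}}}$; taking the supremum over such $g$ of norm at most $1$ (a dense family in the dual of $L^p_w$) gives the claim, and hence the theorem.
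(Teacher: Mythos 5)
Your proposal is correct and follows essentially the same route as the paper: Theorem~\ref{BZ-thm} with $\mathcal{A}_{r_B}=I-(I-e^{-r_B^2\La})^m$ for the weighted upper bound, Theorem~\ref{Martell-thm} to push the unweighted boundedness beyond $L^2$, the subordination formula for $\La^{-\alpha}$ (your identity is an equivalent repackaging of the paper's $\La^{-\alpha}=\Gamma(\alpha)^{-1}\int_0^\vc u^{\alpha}e^{-u\La}\,du/u$), and a Calder\'on-reproducing/duality argument on the dual weight for the reverse inequality. Your explicit derivation of the "consequence" via $g=\La^{s/2}f$ and $S_{\La,s/2}$ is the step the paper leaves implicit, and it is correct.
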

\begin{proof}

We shall apply again Theorem \ref{BZ-thm}. To do so we assume for now that $S_{\La,\alpha}$ is bounded on $L^p$ for $p\in [2,d_\sigma)$. This assumption will be justified later.

Fix $d_\sigma'<p<d_\sigma$ and $w\in A_{\f{p}{d_\sigma'}}\cap RH_{(\f{d_\sigma}{p})'}$. Then we can pick $d_\sigma'<p_1<p< 2<p_2<d_\sigma$ so that $w\in A_{\f{p}{p_1}}\cap RH_{(\f{p_2}{p})'}$.

Fix $m>1 +d+d/p_2$, we will claim that
\begin{equation}\label{eq1-squarefunction}
\Big(\int_{S_j(B)}|S_{\La,\alpha}(I-e^{-r_B^2\La})^mf|^{p_2}dx\Big)^{1/p_2}\lesi 2^{-2(m-1)j}\Big(\int_{B}|f|^{p_2}dx\Big)^{1/p_2},\quad \quad j \geq 2,
\end{equation}
for all balls $B$ and all $f\in C^\vc$ supported in $B$.

Indeed, by Minkowski's inequality we have
$$
\begin{aligned}
\Big(\int_{S_j(B)}|S_{\La,\alpha}(I-e^{-r_B^2\La})^mf|^{p_2}dx\Big)^{1/p_2}&\leq \Big(\int_0^{r_B^2}\left\|(t\La)^{1-\alpha}e^{-t\La}(I-e^{-r_B^2\La})^mf\right\|_{L^{p_2}(S_j(B))}^2\f{dt}{t}\Big)^{1/2}\\
& \quad+\Big(\int_{r_B^2}^\vc\left\|(t\La)^{1-\alpha}e^{-t\La}(I-e^{-r_B^2\La})^mf\right\|_{L^{p_2}(S_j(B))}^2\f{dt}{t}\Big)^{1/2}\\
&:=I_1+I_2.
\end{aligned}
$$
We now take care of $I_1$ first. Note that
$$
\La^{-\alpha}=\f{1}{\Gamma(\alpha)}\int_{0}^\vc u^\alpha e^{-u\La}\f{du}{u}.
$$

Hence,
$$
\begin{aligned}
I_1&\lesi \left(\int_0^{r_B^2}\left[\int_{0}^{r_B^2}\Big(\f{u}{t}\Big)^\alpha\left\|t\La e^{-(t+u)\La}(I-e^{-r_B^2\La})^mf\right\|_{L^{p_2}(S_j(B))}\f{du}{u}\right]^2\f{dt}{t}\right)^{1/2}\\
& \ \ + \left(\int_0^{r_B^2}\left[\int_{r_B^2}^\vc\Big(\f{u}{t}\Big)^\alpha\left\|t\La e^{-(t+u)\La}(I-e^{-r_B^2\La})^mf\right\|_{L^{p_2}(S_j(B))}\f{du}{u}\right]^2\f{dt}{t}\right)^{1/2}\\
&:=I_{11}+I_{12}.
\end{aligned}
$$
We now have
$$
\begin{aligned}
I_{11}&\lesi \sum_{k=0}^m\left(\int_0^{r_B^2}\left[\int_{0}^{r_B^2}\Big(\f{u}{t}\Big)^\alpha\f{t}{t+u+kr_B^2}\left\|(t+u+kr_B^2)\La e^{-(t+u+kr_B^2)\La}f\right\|_{L^{p_2}(S_j(B))}\f{du}{u}\right]^2\f{dt}{t}\right)^{1/2}\\
&\lesi \sum_{k=0}^m\left(\int_0^{r_B^2}\left[\int_{0}^{r_B^2}\Big(\f{u}{t}\Big)^\alpha\f{t}{t+u+kr_B^2}\exp\Big(-\f{4^jr^2_B}{c(t+u+kr_B^2)}\Big)\left\|f\right\|_{L^{p_2}(B)}\f{du}{u}\right]^2\f{dt}{t}\right)^{1/2}\\
&\lesi \sum_{k=0}^m\left(\int_0^{r_B^2}\left[\int_{0}^{r_B^2}\Big(\f{u}{t}\Big)^\alpha\f{t}{4^jr_B^2}\Big(\f{t+u+kr_B^2}{4^jr^2_B}\Big)^{m}\left\|f\right\|_{L^{p_2}(B)}\f{du}{u}\right]^2\f{dt}{t}\right)^{1/2}\\
&\lesi 2^{-2mj}\left\|f\right\|_{L^{p_2}(B)}.
\end{aligned}
$$
Now we use \eqref{eq2-squarefunction} to obtain that
$$
\begin{aligned}
I_{12}&\lesi \left(\int_0^{r_B^2}\left[\int_{[0,r_B^2]^m}\int^\vc_{r_B^2}\Big(\f{u}{t}\Big)^\alpha\left\|t\La^{m+1} e^{-(t+u+s_1+\ldots+s_m)\La}f\right\|_{L^{p_2}(S_j(B))}\f{du}{u}d\vec{s}\right]^2\f{dt}{t}\right)^{1/2}\\
\end{aligned}
$$
which along with Theorem \ref{thm-Tt} and the fact that $u\sim t+u+s_1+\ldots+s_m$ implies that
$$
\begin{aligned}
I_{12}&\lesi \left(\int^{r_B^2}_0\left[\int_{[0,r_B^2]^m}\int^\vc_{r_B^2}\Big(\f{u}{t}\Big)^\alpha \f{t}{u^{m+1}}e^{-\f{4^jr_B^2}{cu}}\left\|f\right\|_{L^{p_2}(B)}\f{du}{u}d\vec{s}\right]^2\f{dt}{t}\right)^{1/2}\\
&\lesi 2^{-2mj} \left\|f\right\|_{L^{p_2}(B)}.
\end{aligned}
$$
As a consequence,
$$
I_1\lesi 2^{-2mj}\left\|f\right\|_{L^{p_2}(B)}.
$$

Similarly, we split $I_2$ as follows
$$
\begin{aligned}
I_2&\lesi \left(\int^\vc_{r_B^2}\left[\int_{0}^{r_B^2}\Big(\f{u}{t}\Big)^\alpha\left\|t\La e^{-(t+u)\La}(I-e^{-r_B^2\La})^mf\right\|_{L^{p_2}(S_j(B))}\f{du}{u}\right]^2\f{dt}{t}\right)^{1/2}\\
& \ \ + \left(\int^\vc_{r_B^2}\left[\int_{r_B^2}^\vc\Big(\f{u}{t}\Big)^\alpha\left\|t\La e^{-(t+u)\La}(I-e^{-r_B^2\La})^mf\right\|_{L^{p_2}(S_j(B))}\f{du}{u}\right]^2\f{dt}{t}\right)^{1/2}\\
&:=I_{21}+I_{22}.
\end{aligned}
$$
The argument used to estimate $I_{12}$ can be applied again to show that
$$
I_{21}\lesi 2^{-2mj}\left\|f\right\|_{L^{p_2}(B)}.
$$
On the other hand, using this argument, we  also dominate $I_{22}$ as follows
$$
\begin{aligned}
I_{22}&\lesi \left(\int_{r_B^2}^\vc\left[\int_{[0,r_B^2]^m}\int^\vc_{r_B^2}\Big(\f{u}{t}\Big)^\alpha \f{t}{(u+t)^{m+1}}e^{-\f{4^jr_B^2}{c(u+t)}}\left\|f\right\|_{L^{p_2}(B)}\f{du}{u}d\vec{s}\right]^2\f{dt}{t}\right)^{1/2}\\
&\lesi \left(\int_{r_B^2}^\vc\left[\int_{[0,r_B^2]^m}\int^\vc_{r_B^2}\Big(\f{u}{t}\Big)^\alpha \f{1}{u(u+t)^{m-1}} e^{-\f{4^jr_B^2}{c(u+t)}}\left\|f\right\|_{L^{p_2}(B)}\f{du}{u}d\vec{s}\right]^2\f{dt}{t}\right)^{1/2}.
\end{aligned}
$$
Using the inequality 
\[
\f{1}{(u+t)^{m-1}}e^{-\f{4^jr_B^2}{c(u+t)}}\lesi 2^{-2(m-1)j}r_B^{-2(m-1)},
\]
and by a simple calculation we obtain
$$
\begin{aligned}
I_{22}&\lesi 2^{-2(m-1)j}\left\|f\right\|_{L^{p_2}(B)}.
\end{aligned}
$$
Therefore,
$$
I_{22}\lesi 2^{-2(m-1)j}\left\|f\right\|_{L^{p_2}(B)}.
$$
Hence, this completes the proof of \eqref{eq1-squarefunction}. At this stage, arguing similarly to Theorem \ref{thm-HardyIneq}, we obtain that
$$
\|S_{\La,\alpha}f\|_{L^p_w}\lesi \|f\|_{L^p_w}.
$$
To prove the reverse inequality, by functional calculus theory for $g\in L^{p'}_v$ with $v=w^{1-p'}$ we have
$$
\begin{aligned}
\int_{\Rd} f(x)g(x)dx&=c(\alpha)\int_{\Rd} \int_0^\vc(t\La)^{2(1-\alpha)}e^{-2t\La}f(x)g(x)\f{dt}{t}dx,
\end{aligned}
$$
where $c(\alpha)= \int_0^\vc t^{2(1-\alpha)}e^{-2t}\f{dt}{t}$. (Actually, this identity holds true in $L^2$ first. However, due to the weighted $L^p$ boundeness of $S_{\La,\alpha}$ we can extend the convergence to $L^p_w$.)

By H\"older's inequality, we can write
$$
\begin{aligned}
\int_{\Rd} f(x)g(x)dx
&=c(\alpha)\int_{\Rd} \int_0^\vc(t\La)^{1-\alpha}e^{-t\La}f(x)(t\La)^{1-\alpha}e^{-t\La}g(x)\f{dt}{t}dx\\
&\lesi \int_{\Rd}S_{\La,\alpha}f(x)S_{\La,\alpha}g(x) dx\\
&\lesi\|S_{\La,\alpha}f\|_{L^p_w}\|S_{\La,\alpha}g\|_{L^{p'}_v}.
\end{aligned}
$$
Note that from (vii) Lemma \ref{weightedlemma1} we obtain $v\in A_{p'/d_\sigma\cap RH_{(d_\sigma'/p')'}}$. Hence, from the weighted $L^p$ estimates of $S_{\La,\alpha}$ we have proved we get that $\|S_{\La,\alpha}g\|_{L^{p'}_v}\lesi \|g\|_{L^{p'}_v}$, we obtain
$$
\int_{\Rd} f(x)g(x)dx\lesi \|S_{\La,\alpha}f\|_{L^p_w}\|g\|_{L^{p'}_v}.
$$
As a consequence,
$$
\|f\|_{L^p_w}\lesi \|S_{\La,\alpha}f\|_{L^p_w}.
$$
\bigskip

To complete the proof, we need to prove the original assertion that $S_{\La,\alpha}$ is bounded on $L^r$ for all $r\in (2, d_\sigma)$. According to Theorem \ref{Martell-thm}, for any $q_0\in (2,d_\sigma)$ it suffices to prove that
\begin{eqnarray}\label{e1-SLa}
\Big( \fint_{B} \left| S_{\La,\alpha}(I-\mathcal{A}_{r_B})f\right|^{2}dx\Big)^{1/2} \leq
C \mathcal{M}_{2}(f)(x),
\end{eqnarray}
and
\begin{eqnarray}\label{e2-SLa}
\Big( \fint_{B} \big| S_{\La,\alpha}\mathcal{A}_{r_B}f\big|^{q_0}dx\Big)^{1/q_0} \leq
C \mathcal{M}_{2}(|S_{\La,\alpha}f|)(x),
\end{eqnarray}
all balls $B$ with radius $r_B$, all $f \in C^{\infty}_c(\mathbb{R}^d) $ and all $x\in B$ with $\mathcal{A}_{r_B}=I-(I-e^{-r_B^2\La})^m$, $m>1+d+d/p_2$.

To prove \eqref{e1-SLa}, we write
$$
\begin{aligned}
\Big( \fint_{B} \left| S_{\La,\alpha}(I-\mathcal{A}_{r_B})f\right|^{2}dx\Big)^{1/2}&\leq \sum_{j=0}^\vc\Big( \fint_{B} \left| S_{\La,\alpha}(I-\mathcal{A}_{r_B})f_j\right|^{2}dx\Big)^{1/2}\\
&:=\sum_{j=0}^\vc I_j,
\end{aligned}
$$
where $f_j=f\chi_{S_j(B)}$.

For $j=0,1$, using the $L^2$-boundedness of $S_{\La,\alpha}$ and $\mathcal{A}_{r_B}$ we have
$$
I_j\lesi \mathcal{M}_{2}(f)(x).
$$
For $j\geq 2$, the argument in the proof of \eqref{eq1-squarefunction} shows that
$$
I_j\lesi 2^{-j(2m-d/2)}\Big(\fint_{S_j(B)}|f|^{2}\Big)^{1/2}.
$$
Therefore,
$$
\sum_{j=0}^\vc I_j\lesi \mathcal{M}_{2}(f)(x)
$$
which proves \eqref{e1-SLa}.

It remains to prove \eqref{e2-SLa}. Indeed, we have
$$
\begin{aligned}
\Big(\int_B &|S_{\La,\alpha}[I-(I-e^{-r_B^2\La})^m]f(x)|^{q_0}dx\Big)^{1/q_0}\\
&\lesi \sum_{k=1}^m \Big(\int_B |S_{\La,\alpha}e^{-kr_B^2\La}f(x)|^{q_0}dx\Big)^{1/q_0}\\
&\lesi \sup_{1\leq k\leq m}\left[\int_B \left( \int_0^\vc |e^{-kr_B^2\La}(t\La)^{1-\alpha}e^{-t\La}f(x)|^2\f{dt}{t} \right)^{q_0/2}dx\right]^{1/q_0}\\
&\lesi\sum_{j\geq 0}\sup_{1\leq k\leq m}\left[\int_B \left( \int_0^\vc \left|e^{-kr_B^2\La}[(t\La)^{1-\alpha}e^{-t\La}f\chi_{S_j(B)}](x)\right|^2\f{dt}{t} \right)^{q_0/2}dx\right]^{1/q_0}
\end{aligned}
$$
which along with Minkowski's inequality, Theorem \ref{thm-ptk} and Theorem \ref{thm-Tt} gives
$$
\begin{aligned}
\Big(\int_B &|S_{\La,\alpha}[I-(I-e^{-r_B^2\La})^m]f(x)|^{q_0}dx\Big)^{1/q_0}\\
&\lesi\sum_{j\geq 0}\sup_{1\leq k\leq m}\left( \int_0^\vc \left\|e^{-kr_B^2\La}[(t\La)^{1-\alpha}e^{-t\La}f\chi_{S_j(B)}]\right\|_{L^{q_0}(B)}^2\f{dt}{t} \right)^{1/2}\\
&\lesi\sum_{j\geq 0} e^{-c4^j}|B|^{-(\f{1}{2}-\f{1}{q_0})}\left( \int_0^\vc \left\|(t\La)^{1-\alpha}e^{-t\La}f\right\|_{L^2(S_j(B))}^2\f{dt}{t} \right)^{1/2}\\
&\lesi\sum_{j\geq 0} e^{-c4^j}|B|^{-(\f{1}{2}-\f{1}{q_0})}\Big(\int_{2^jB} |S_{\La,\alpha}f(x)|^{2}dx\Big)^{1/2}.
\end{aligned}
$$
This implies \eqref{e2-SLa}. Hence the proof is complete.
\end{proof}

The following result regarding weighted estimates for the difference of square functions will play an essential role in the proofs of the main results.
\begin{thm}\label{thm-difference}
	We have the following estimate
$$
\left\|\left(\int_0^\vc t^{-s}\left|\left(t\La e^{-t\La} +t\Delta e^{t\Delta}\right)f\right|^2\f{dt}{t}\right)^{1/2}\right\|_{L^p_w}\lesi \left\|\f{f}{|x|^s}\right\|_{L^p_w}
$$
provided that
\begin{enumerate}[{\rm (a)}]
	\item $a\geq 0$, $1<p<\vc$ and $w\in A_p$; or
	\item $-\left(\f{d-2}{2}\right)^2\leq a<0$, $1\vee \f{d}{d+s-\sigma}<p<d_\sigma$ and $w\in A_{\f{p}{1\vee \f{d}{d+s-\sigma}}}\cap RH_{(d_\sigma/p)'}$.
\end{enumerate}
\end{thm}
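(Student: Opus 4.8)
The plan is to reduce the inequality to the weighted $L^p$--boundedness of a single linearizable operator, and then to run the machinery of Section~2 exactly as in the proofs of Theorems~\ref{thm-HardyIneq} and \ref{thm-square}. Set $Q_t:=t\La e^{-t\La}+t\Delta e^{t\Delta}$, so that the quantity to be controlled is $\|R_sf\|_{L^p_w}$ with $R_sf(x)=\Big(\int_0^\vc t^{-s}|Q_tf(x)|^2\,\f{dt}{t}\Big)^{1/2}$. Replacing $f$ by $|x|^sf$, it suffices to prove that
$$
\widetilde R_sg(x):=\Big(\int_0^\vc t^{-s}\big|Q_t(|\cdot|^sg)(x)\big|^2\,\f{dt}{t}\Big)^{1/2}
$$
is bounded on $L^p_w$ for the stated ranges. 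If $a=0$ then $\La=-\Delta$ and $Q_t\equiv0$, so one may assume $a\neq0$; then in case~(a) $\sigma<0$, $d_\sigma=\vc$, the exponent $r_1:=1$, and the hypothesis reads $1<p<\vc$, $w\in A_p$, while in case~(b) $\sigma>0$, $d_\sigma=d/\sigma$, $r_1:=1\vee\f{d}{d+s-\sigma}=d'_{(\sigma-s)\vee0}$, and $w\in A_{p/r_1}\cap RH_{(d_\sigma/p)'}$.

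The core of the argument is a pointwise estimate for the kernel $K_t(x,y)$ of $Q_t$ that displays the cancellation between $\La$ and $-\Delta$. Starting from the perturbation identity $Q_t=tVe^{-t\La}-t\Delta\big(e^{-t\La}-e^{t\Delta}\big)$ with $V=a|x|^{-2}$, together with the Duhamel formula
$$
e^{-t\La}-e^{t\Delta}=-a\int_0^t e^{-(t-\tau)\La}\,|\cdot|^{-2}\,e^{\tau\Delta}\,d\tau,
$$
and the heat kernel bounds of Theorems~\ref{thm-heatkernelLa} and \ref{thm-ptk} (the latter applied to $\La e^{-(t-\tau)\La}$, which appears after writing $\Delta=-\La+V$ on the left factor), I would establish
$$
|K_t(x,y)|\lesi\Big(1\wedge\f{t}{|x|^2}\wedge\f{t}{|y|^2}\Big)\Big(1+\f{\sqrt t}{|x|}\Big)^{\sigma}\Big(1+\f{\sqrt t}{|y|}\Big)^{\sigma}t^{-d/2}e^{-|x-y|^2/(ct)}.
$$
The new ingredient is the factor $1\wedge t/|x|^2$, the single power of the potential that survives the cancellation. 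The main work is to estimate the double integral $\int_0^t\!\int_{\Rd}p_{t-\tau}(x,z)\,|z|^{-2}\,G_\tau(z-y)\,dz\,d\tau$ (with $G_\tau$ the Gaussian heat kernel of $-\Delta$), and its variant coming from the $\Delta$ in front of $e^{-t\La}-e^{t\Delta}$; one does this by splitting according to $|z|\lesi\sqrt t$ or $|z|>\sqrt t$ and using Lemma~\ref{lem1-Tt}(a) to absorb $|z|^{-2-\sigma}$ near the origin, which is licit since $\sigma<\f{d-2}{2}<d-2$. One has to extract the cancellation term by term, since near the origin each of $tVe^{-t\La}$ and $t\Delta(e^{-t\La}-e^{t\Delta})$ exceeds the claimed bound by the factor $t/|x|^2$.

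Granting this bound, the rest follows the scheme of Theorems~\ref{thm-HardyIneq} and \ref{thm-square}. The factor $1\wedge t/|x|^2$ is precisely what lets the weight $|\cdot|^s$ be absorbed: for small $t$ it supplies the decay making $\int_0 t^{-s}|\cdot|^2\,\f{dt}{t}$ converge (here $0<s<2$ is used), while for large $t$, where no gain is present, one instead uses the factors $(1+\sqrt t/|x|)^{\sigma}$ and the restriction $p<d_\sigma$, which is how $d_\sigma$ enters the hypotheses. Concretely, one applies Theorem~\ref{BZ-thm} (for the $L^{q_0}$--boundedness it requires, Theorem~\ref{Martell-thm} bootstrapping from $L^2$) with $\mathcal A_{r_B}=I-(I-e^{-r_B^2\La})^m$, $m$ large. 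The starting $L^2$-- and $L^{q_0}$--bounds for $\widetilde R_s$ follow from the kernel bound; in matching the weight exponents one does not discard $|y|^s$ crudely, but pairs $|y|^s$ with $(1+\sqrt t/|y|)^{\sigma}$ (which on the relevant regions behaves like $t^{s/2}(1+\sqrt t/|y|)^{(\sigma-s)\vee0}$) and $|x|^s$ with the gain $1\wedge t/|x|^2$: the resulting kernels, up to the harmless prefactor $t^{s/2}$, are of the form \eqref{kernelTt} with $y$--exponent $(\sigma-s)\vee0$ and $x$--exponent at most $(\sigma-s)\vee0$, so that $d'_\beta<d_\alpha$ and the admissible range of $p$ becomes $\big(d'_{(\sigma-s)\vee0},\,d_\sigma\big)=(r_1,d_\sigma)$; the scalar inequalities needed (for the $\tau$--integrals and for $d'_\beta<d_\alpha$) are all consequences of $\sigma<\f{d-2}{2}$. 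The off-diagonal estimates \eqref{eq1-BZ}--\eqref{eq1-BZ-bis} are obtained verbatim as for \eqref{eq1-thmHardyIneq} and \eqref{eq1-squarefunction}: expand $(I-e^{-r_B^2\La})^m$ by the binomial theorem and by \eqref{eq2-squarefunction}, insert the kernel bound and Theorem~\ref{thm-Tt}, and carry out the $t$-- and $\tau$--integrations by Minkowski's inequality as in Theorem~\ref{thm-square}, obtaining coefficients $\alpha(j)\lesi 2^{-j(2m-d/p_2)}$ with $\sum_j\alpha(j)2^{jd}<\vc$. Cases~(a) and (b) then fall out, since for $\sigma<0$ one has $d_\sigma=\vc$, $r_1=1$ and no reverse-H\"older condition is needed, while for $\sigma>0$ one gets exactly $w\in A_{p/r_1}\cap RH_{(d_\sigma/p)'}$.

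The main obstacle is the kernel estimate for $Q_t$; once it is in place, everything else is bookkeeping parallel to the two preceding subsections. Its delicacy is twofold: $Q_t$ cannot be controlled term by term near the origin, so the cancellation inside the Duhamel integral must be made explicit in order to produce the gain $1\wedge t/|x|^2$; and in the range $t\gg|x|^2$, where that gain is absent, one must verify that the bound remains compatible with the endpoint $p<d_\sigma$ once the weight $|\cdot|^s$ is brought in.
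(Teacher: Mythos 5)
Your first step --- a Duhamel/perturbation argument giving a kernel bound for $Q_t=t\La e^{-t\La}+t\Delta e^{t\Delta}$ with the cancellation gain $1\wedge\f{t}{|x|^2}\wedge\f{t}{|y|^2}\sim\bigl(1+\f{|x|+|y|}{\sqrt t}\bigr)^{-2}$ --- is exactly the paper's Proposition \ref{prop-difference}. However, your bound as stated is false in case (a): there $\sigma<0$, so your factors $(1+\sqrt t/|x|)^{\sigma}(1+\sqrt t/|y|)^{\sigma}$ force the bound to vanish as $x,y\to0$, whereas the kernel of $t\Delta e^{t\Delta}$, namely $\bigl(\f{|x-y|^2}{4t}-\f d2\bigr)(4\pi t)^{-d/2}e^{-|x-y|^2/4t}$, does not vanish at the origin while that of $t\La e^{-t\La}$ does (Proposition \ref{thm-ptk}); hence $K_t(x,y)\not\to0$. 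This is repairable (drop those factors in case (a)), but it reflects the fact that near the origin no extra cancellation survives; accordingly the paper proves the gain in case (b) only for $|x|,|y|\geq\sqrt t/2$ and treats the region near the origin separately with the raw heat-kernel bounds.

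The genuine gap is in your second step. You conjugate by $M_{|\cdot|^s}$ and propose to run Theorem \ref{BZ-thm} on $\widetilde R_s=R_sM_{|\cdot|^s}$ with $\mathcal A_{r_B}=I-(I-e^{-r_B^2\La})^m$, asserting that the off-diagonal estimates follow ``verbatim'' as for \eqref{eq1-thmHardyIneq} and \eqref{eq1-squarefunction}. They do not, because $M_{|\cdot|^s}$ now sits \emph{between} $Q_t$ and $(I-e^{-r_B^2\La})^m$: in Theorems \ref{thm-HardyIneq} and \ref{thm-square} both pieces are functions of $\La$ and combine by functional calculus through \eqref{eq2-squarefunction}, which is what turns $|\vec s|^{-m}$ into $(t+|\vec s|)^{-m}\sim t^{-m}$ and makes the $t$-integral converge for $t\geq r_B^2$. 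Here that combination is blocked. Concretely, for $x\in S_j(B)$, $f$ supported in $B$ and $t\geq 4^jr_B^2$, each binomial term gives only $\|Q_tM_{|\cdot|^s}e^{-kr_B^2\La}f\|_{L^{p_2}(S_j(B))}\lesi t^{s/2}\|f\|_{L^{p_2}(B)}$ (your pairing $K_t(x,z)|z|^s\lesi t^{s/2}\cdot(\cdots)$, composed with the Gaussian of $e^{-kr_B^2\La}$, yields no decay in $t$ once the Gaussian has saturated), so $\int_{4^jr_B^2}^{\vc}t^{-s}\|\cdots\|^2\f{dt}{t}$ diverges; and the alternative route via \eqref{eq2-squarefunction} produces $\int_{[0,r_B^2]^m}|\vec s|^{-m}d\vec s$, which is itself divergent without the semigroup recombination. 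You would need a commutator estimate for $[M_{|\cdot|^s},e^{-r_B^2\La}]$ or a different $\mathcal A_{r_B}$. The paper avoids this entirely: after Proposition \ref{prop-difference} it never invokes Theorem \ref{BZ-thm} for this result, but decomposes $t$ dyadically, uses $\ell^1\hookrightarrow\ell^2$, splits into the regions $2^j\gtrless|x|+|y|$ (and, in case (b), $2^j\gtrless(|x|\wedge|y|)/2$), and dominates each piece pointwise by $\mathcal M(f/|\cdot|^s)$, $\mathcal M_{p_1}(f/|\cdot|^s)$ or, by duality, $\mathcal M_{q_1'}g$, concluding with Lemma \ref{Lem-maximalfunction}; the exponents $1\vee\f{d}{d+s-\sigma}$ and $d_\sigma$ arise from the H\"older steps there, exactly as you anticipated. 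Replacing your second step by an argument of this type (or supplying the missing commutator/off-diagonal analysis) is necessary for the proof to close.
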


Before proceeding with the proof of the theorem, we need the following technical results on kernel estimates.

Let $D_t(x,y)$ be a kernel of $t\La e^{-t\La} +t\Delta e^{t\Delta}$. We have the following estimates:
\begin{prop}\label{prop-difference}
\begin{enumerate}[(a)]
\item If $a\geq 0$ then
\begin{equation}\label{eq1-DifferenceKernels}
|D_t(x,y)|\lesi t^{-d/2}\left(1+\f{|x|+|y|}{\sqrt{t}}\right)^{-2}e^{-\f{|x-y|^2}{ct}}
\end{equation}
for all $x,y \in \mathbb{R}^d$ and $t>0$.

\item If $-\left(\f{d-2}{2}\right)^2\leq a<0$ then
\begin{equation}\label{eq2-DifferenceKernels}
|D_t(x,y)|\lesi t^{-d/2}\left(1+\f{|x|+|y|}{\sqrt{t}}\right)^{-2}e^{-\f{|x-y|^2}{ct}}
\end{equation}
for all $t>0$ and $|x|, |y|\geq \sqrt{t}/2$.

\end{enumerate}
\end{prop}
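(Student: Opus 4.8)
The plan is to obtain the kernel $D_t(x,y)$ of $t\La e^{-t\La}+t\Delta e^{t\Delta}$ by writing it as the $t$-derivative (up to sign) of the difference of the two heat semigroups, and then to exploit a comparison between $p_t(x,y)$ and the Gaussian kernel $g_t(x,y)=(4\pi t)^{-d/2}e^{-|x-y|^2/4t}$ of $e^{t\Delta}$. The crucial input is the known pointwise estimate of $p_t(x,y)-g_t(x,y)$: from the literature on inverse-square potentials (e.g.\ \cite{MS, LS}) one has, for the right sign conditions on $a$, a bound of the form
\[
|p_t(x,y)-g_t(x,y)|\lesi \Big(\f{\sqrt t}{|x|}\wedge 1\Big)+\Big(\f{\sqrt t}{|y|}\wedge 1\Big)\ \text{times}\ t^{-d/2}e^{-|x-y|^2/ct},
\]
i.e.\ the difference gains the small factor $\bigl(1+\tfrac{|x|+|y|}{\sqrt t}\bigr)^{-1}$ relative to a Gaussian. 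To upgrade the single power to the square $\bigl(1+\tfrac{|x|+|y|}{\sqrt t}\bigr)^{-2}$ claimed in \eqref{eq1-DifferenceKernels}–\eqref{eq2-DifferenceKernels} one uses that, after forming $t\partial_t$ of the difference, an extra cancellation occurs — writing $t\La e^{-t\La}+t\Delta e^{t\Delta}= -t\partial_t\bigl(e^{-t\La}-e^{t\Delta}\bigr)$, the leading term in the expansion of the difference is annihilated, leaving a remainder with the improved decay.

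Concretely, I would proceed as follows. First, in case (a) with $a\ge 0$, write the difference of semigroups via Duhamel: $e^{-t\La}-e^{t\Delta}= -a\int_0^t e^{-(t-\tau)\La}\,|x|^{-2}\,e^{\tau\Delta}\,d\tau$ at the kernel level, so that
\[
p_t(x,y)-g_t(x,y)= -a\int_0^t\!\!\int_{\Rd} p_{t-\tau}(x,z)\,|z|^{-2}\,g_\tau(z,y)\,dz\,d\tau.
\]
Estimating the inner integral with Theorem \ref{thm-heatkernelLa} for $p_{t-\tau}$, with Lemma \ref{lem1-Tt}(a) to absorb the $|z|^{-2}$ singularity (legitimate since $d\ge 3$), and with the Gaussian $g_\tau$, produces the factor $\bigl(1+\tfrac{|x|+|y|}{\sqrt t}\bigr)^{-2}$: each of the two "$1+\sqrt t/|x|$"-type weights at the endpoints contributes, when $|x|$ or $|y|\lesssim\sqrt t$, a genuine power of $|x|/\sqrt t$ or $|y|/\sqrt t$ coming from integrating $|z|^{-2}e^{-|x-z|^2/c(t-\tau)}$ near the origin. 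Then apply $t\partial_t$: since $t\La e^{-t\La}$ and $t\Delta e^{t\Delta}$ are handled by Proposition \ref{thm-ptk} (with $k=1$) and the analogous Gaussian bound, and the Duhamel remainder is differentiated term by term, the same two-power gain persists, while the Gaussian factor $e^{-|x-y|^2/ct}$ and the scaling $t^{-d/2}$ are preserved (possibly after enlarging $c$).

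For case (b), $-\bigl(\tfrac{d-2}{2}\bigr)^2\le a<0$, the same Duhamel identity holds but now $a<0$ only changes signs; the new point is that in the region $|x|,|y|\ge\sqrt t/2$ the weights $\bigl(1+\tfrac{\sqrt t}{|x|}\bigr)^\sigma$, $\bigl(1+\tfrac{\sqrt t}{|y|}\bigr)^\sigma$ appearing in Theorem \ref{thm-heatkernelLa} are bounded by absolute constants, so Proposition \ref{thm-ptk} gives a clean Gaussian upper bound for $p_{t-\tau}$, and the factor $|z|^{-2}$ is under control as long as the integration variable $z$ stays away from $0$ relative to the scale; one splits the $z$-integral into $|z|\ge\sqrt\tau$ and $|z|<\sqrt\tau$ and estimates each using Lemma \ref{lem1-Tt}. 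The restriction $|x|,|y|\ge\sqrt t/2$ is exactly what is needed to avoid the genuine singularity of the potential near the origin, where no such improved Gaussian bound can hold.

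The main obstacle, and the step deserving the most care, is establishing that the Duhamel remainder genuinely produces the \emph{squared} weight $\bigl(1+\tfrac{|x|+|y|}{\sqrt t}\bigr)^{-2}$ rather than only the first power: one must track how the two endpoint weight factors $\bigl(1+\tfrac{\sqrt t}{|x|}\bigr)^\sigma$ and $\bigl(1+\tfrac{\sqrt t}{|y|}\bigr)^\sigma$ from the kernel of $p_{t-\tau}$ interact with the $|z|^{-2}$ singularity and the two Gaussians, uniformly over $0<\tau<t$, and verify that the time integral $\int_0^t(\cdots)\,d\tau$ converges and scales correctly after the $t\partial_t$ differentiation. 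This is where Lemma \ref{lem1-Tt} is used repeatedly, and where one must be careful to enlarge the Gaussian constant $c$ at each application of the elementary inequality $e^{-|x-z|^2/c_1 s}e^{-|z-y|^2/c_2 s}\lesi e^{-|x-y|^2/c s}$ combined with the $L^1$-type bound of Lemma \ref{lem1-Tt}(b).
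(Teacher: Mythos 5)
Your overall strategy—express the difference via a Duhamel formula and exploit the integral
\(\int t^{-d/2}e^{-|x-z|^{2}/ct}|z|^{-2}\,dz\lesssim |x|^{-2}\)—is the same engine the paper uses, but several of the key moves are missing or misattributed, and one step as described would not go through.

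First, the starting heuristic is off. You propose to begin from a bound on \(p_t-g_t\) with a \emph{single} factor \(\bigl(1+\tfrac{|x|+|y|}{\sqrt t}\bigr)^{-1}\) and upgrade to the square via a cancellation in \(t\partial_t\). The paper explicitly remarks that the known upper bounds for the kernel of \(e^{-t\La}-e^{t\Delta}\) (from \cite{K.et.al}) are \emph{not} sufficient, and it does not rely on any cancellation under \(t\partial_t\). Instead it derives a Duhamel-type representation for \(D_t\) \emph{directly} (formula \eqref{eq-Kato}), with an explicit prefactor \(at\). The squared gain then appears automatically: the \(at\) prefactor together with \(\int t^{-d/2}e^{-|x-z|^{2}/ct}|z|^{-2}dz\lesssim |x|^{-2}\) gives each term the bound \(t^{-d/2}e^{-|x-y|^2/ct}\cdot\tfrac{t}{|x|^2}\), and \(\tfrac{t}{|x|^{2}}\sim\bigl(1+\tfrac{|x|+|y|}{\sqrt t}\bigr)^{-2}\) once one reduces to the region \(|x|\sim|y|\ge \sqrt t/2\). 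You attribute the squared factor to the heat-kernel weights \((1+\sqrt t/|x|)^{\sigma}\) becoming small for \(|x|\lesssim\sqrt t\); for \(a\ge0\) we have \(\sigma\le0\), so those weights are merely bounded by \(1\) and play no role, and in any case the improved bound is \emph{trivial} for \(|x|\lesssim\sqrt t\) (since the target factor is then \(\gtrsim1\)). The nontrivial regime, where all the work happens, is precisely \(|x|,|y|\gg\sqrt t\) with \(|x|\sim|y|\), and your proposal never makes this reduction explicit.

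Second, the passage from the Duhamel formula for \(p_t-g_t\) to one for \(D_t\) is not a harmless ``differentiate term by term.'' Naively differentiating \(\int_0^t e^{(t-s)\Delta}|x|^{-2}e^{-s\La}\,ds\) in \(t\) produces a singular boundary term of the form \(|x|^{-2}e^{-t\La}\). The paper avoids this by splitting the \(s\)-integral at \(t/2\) and arranging, via an integration by parts in \(s\), that the derivative always lands on the semigroup whose time parameter is \(\ge t/2\); this is exactly the structure of the three terms \(I_1,I_2,I_3\) in \eqref{eq-Kato}. Your write-up hand-waves past this, and as stated the argument does not compile.

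Third, a couple of smaller points. In case (a) the paper also uses the domination \(0\le p_t\le g_t\) (valid because the potential is repulsive), which keeps the estimates simple; you invoke Theorem~\ref{thm-heatkernelLa} instead, which for \(a\ge0\) reduces to the same Gaussian bound, so this is harmless but worth noting. In case (b) your sketch (restrict to \(|x|,|y|\ge\sqrt t/2\), where the weight factors are bounded; split the \(z\)-integral at \(|z|=\sqrt\tau\)) is aligned with the paper, though you should explicitly use \(\sigma+2<d\) to control the small-\(|z|\) piece.

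In short: the high-level plan is right, but you need (i) the exact Duhamel formula for \(D_t\) with the \(t/2\) split that avoids singular endpoint terms, (ii) the reduction to the region \(|x|\sim|y|\ge\sqrt t/2\), and (iii) a correct account of the source of the squared factor—namely the explicit \(at\) prefactor plus the \(|z|^{-2}\) convolution estimate, not the heat-kernel weights or a cancellation in \(t\partial_t\).
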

We remark that in \cite{K.et.al} the authors gave upper bounds for the kernels of  $e^{-t\La} - e^{t\Delta}$. However, this estimate is not sufficient for us.
\begin{proof}	
We first give the proof for the case $a\geq 0$. Note that in this case since both kernels of $t\La e^{-t\La}$ and $t\Delta e^{t\Delta}$ satisfy Gaussian upper bounds, there exists $C, c>0$ so that
$$
|D_t(x,y)|\leq Ct^{-d/2}e^{-\f{|x-y|^2}{ct}}
$$
for all $x,y \in \mathbb{R}^d$ and $t>0$.

Hence, it suffices to prove \eqref{eq1-DifferenceKernels} for $|x|\sim |y|$ and $|x|, |y|\geq \sqrt{t}/2$. From Duhamel's formula, we obtain that
\begin{equation}
\label{eq-Kato}
\begin{aligned}
D_t(x,y)=&at\int_{\mathbb{R}^d}\tilde{p}_{t/2}(x,z)|z|^{-2}p_{t/2}(z,y)dz+at\int_0^{t/2}\int_{\mathbb{R}^d}\tilde{p}_{t-s,1}(x,z)|z|^{-2}p_{s}(z,y)dz\f{ds}{t-s}\\
&+ at\int_{t/2}^t\int_{\mathbb{R}^d}\tilde{p}_{t-s}(x,z)|z|^{-2}p_{s,1}(z,y)dz\f{ds}{s}\\
&=I_1+I_2+I_3,
\end{aligned}
\end{equation}
where $\tilde{p}_{t,k}(x,y)$ denotes the kernel of $(-1)^k (t\Delta)^ke^{t\Delta}$.

Using the fact that $0\leq p_t(x,y)\leq \tilde{p}_t(x,y)$ to get that
$$
\begin{aligned}
I_1&\lesi t\int_{\mathbb{R}^d}\f{1}{t^{d}}e^{-\f{|x-z|^2}{8t}}|z|^{-2}e^{-\f{|z-y|^2}{8t}}dz\\
&\lesi \f{1}{t^{d/2}}e^{-\f{|x-y|^2}{16t}} t\int_{\mathbb{R}^d}\f{1}{t^{d/2}}e^{-\f{|x-z|^2}{16t}}|z|^{-2}dz
\end{aligned}
$$
which along with the fact that
\begin{equation}\label{eq-V}
\int_{\mathbb{R}^d}\f{1}{t^{d/2}}e^{-\f{|x-z|^2}{ct}}|z|^{-2}dz\lesi \f{1}{|x|^2}
\end{equation}
implies that
$$
I_1\lesi \f{1}{t^{d/2}}e^{-\f{|x-y|^2}{16t}} \f{t}{|x|^2}.
$$

Similarly, by using the Gaussian upper bounds of $\tilde{p}_{t-s,1}(x,z)$ and $p_s(z,y)$ and the fact that
$$
\f{|x-z|^2}{t-s}+\f{|z-y|^2}{s}\geq \f{|x-y|^2}{2t} \ \text{for all $s\in (0,t)$}
$$
we also obtain that
$$
\begin{aligned}
I_2&\lesi e^{-\f{|x-y|^2}{ct}} t\int_0^{t/2}\int_{\mathbb{R}^d}\f{1}{{(t-s)}^{d/2}} \f{1}{s^{d/2}}e^{-\f{|z-y|^2}{c's}}|z|^{-2}dz\f{ds}{t-s}\\
&\lesi \f{1}{{t}^{d/2}}e^{-\f{|x-y|^2}{ct}} t\int_0^{t/2}\int_{\mathbb{R}^d} \f{1}{s^{d/2}}e^{-\f{|z-y|^2}{c's}}|z|^{-2}dz\f{ds}{t}\\
&\lesi \f{1}{t^{d/2}}e^{-\f{|x-y|^2}{ct}} \f{t}{|y|^2}
\end{aligned}
$$
where in the last inequality we used \eqref{eq-V}.

Similarly, by a change of variable and arguing as in $I_2$,
$$
I_3\lesi \f{1}{t^{d/2}}e^{-\f{|x-y|^2}{16t}} \f{t}{|x|^2}.
$$
This completes the proof for the case $a\geq 0$.

\medskip

We now consider the case when $-\left(\f{d-2}{2}\right)^2\leq a<0$. In this situation $\sigma>0$ and thus it is easy to observe that
$$
|D_t(x,y)|\lesi t^{-d/2}e^{-\f{|x-y|^2}{ct}},
$$
whenever $|x|,|y|\geq \sqrt{t}/2$. Hence, it suffices to prove \eqref{eq2-DifferenceKernels} for $|x|,|y|\geq \sqrt{t}$ and $|x|\sim |y|$.

By expressing $D_t(x,y)$ as in \eqref{eq-Kato}, we will need to estimate $I_1, I_2, I_3$ for $-\left(\f{d-2}{2}\right)^2<a<0$ and $|x|,|y|\geq \sqrt{t}$ and $|x|\sim |y|$.

Arguing similarly to the case $a\geq 0$, we have
$$
I_1\lesi \f{1}{t^{d/2}}e^{-\f{|x-y|^2}{16t}} \f{t}{|x|^2}.
$$

For the second term $I_2$, from the kernel bounds estimates of $\tilde{p}_{t,1}(x,y)$ and $p_t(x,y)$, and arguing similarly to the case 1, we obtain
$$
\begin{aligned}
I_2&\lesi e^{-\f{|x-y|^2}{ct}} t\int_0^{t/2}\int_{\mathbb{R}^d}\f{1}{{(t-s)}^{d/2}} \f{1}{s^{d/2}}e^{-\f{|z-y|^2}{c's}}|z|^{-2}\left(1+\f{\sqrt{s}}{|z|}\right)^\sigma dz\f{ds}{t}\\
&\lesi \f{1}{{t}^{d/2}}e^{-\f{|x-y|^2}{ct}} \int_0^{t/2}\int_{\mathbb{R}^d} \f{1}{s^{d/2}}e^{-\f{|z-y|^2}{c's}}|z|^{-2}\left(1+\f{\sqrt{s}}{|z|}\right)^\sigma dzds\\
&\lesi \f{1}{{t}^{d/2}}e^{-\f{|x-y|^2}{ct}} \int_0^{t/2}\int_{|z|\geq \sqrt{s}} \f{1}{s^{d/2}}e^{-\f{|z-y|^2}{c's}}|z|^{-2}\left(1+\f{\sqrt{s}}{|z|}\right)^\sigma dzds\\
&\quad +\f{1}{{t}^{d/2}}e^{-\f{|x-y|^2}{ct}} \int_0^{t/2}\int_{|z|<\sqrt{s}} \f{1}{s^{d/2}}e^{-\f{|z-y|^2}{c's}}|z|^{-2}\left(1+\f{\sqrt{s}}{|z|}\right)^\sigma dzds.
\end{aligned}
$$
Similarly to the case $a\geq 0$, we have
$$
\begin{aligned}
\f{1}{{t}^{d/2}}e^{-\f{|x-y|^2}{ct}} &\int_0^{t/2}\int_{|z|\geq \sqrt{s}} \f{1}{s^{d/2}}e^{-\f{|z-y|^2}{c's}}|z|^{-2}\left(1+\f{\sqrt{s}}{|z|}\right)^\sigma dzds\\
&\lesi \f{1}{{t}^{d/2}}e^{-\f{|x-y|^2}{ct}} \int_0^{t/2}\int_{|z|\geq \sqrt{s}} \f{1}{s^{d/2}}e^{-\f{|z-y|^2}{c's}}|z|^{-2}dzds\\
&\lesi \f{1}{t^{d/2}}e^{-\f{|x-y|^2}{16t}} \f{t}{|y|^2}\sim \f{1}{t^{d/2}}e^{-\f{|x-y|^2}{16t}} \f{t}{|x|^2}.
\end{aligned}
$$
where in the second inequality we used \eqref{eq-V}.

From the fact that $\sigma+2<d$ we have
$$
\int_{\mathbb{R}^d}\f{1}{t^{d/2}}e^{-\f{|x-z|^2}{ct}}|z|^{-2}\left(1+\f{\sqrt{s}}{|z|}\right)^\sigma dz\lesi \f{1}{|x|^2}.
$$
This implies that
$$
\begin{aligned}
\f{1}{{t}^{d/2}}e^{-\f{|x-y|^2}{ct}} \int_0^{t/2}\int_{|z|<\sqrt{s}} \f{1}{s^{d/2}}e^{-\f{|z-y|^2}{c's}}|z|^{-2}\left(1+\f{\sqrt{s}}{|z|}\right)^\sigma dzds
&\lesi \f{1}{t^{d/2}}e^{-\f{|x-y|^2}{16t}} \f{t}{|x|^2}.
\end{aligned}
$$

Likewise, we get that
$$
I_3\lesi \f{1}{t^{d/2}}e^{-\f{|x-y|^2}{16t}} \f{t}{|x|^2}.
$$
This completes our proof.
\end{proof}

\begin{proof}[Proof of Theorem \ref{thm-difference}:]

We consider two cases.

\noindent{\bf Case 1: $a\geq 0$}

Fix $1<p<\vc$ and $w\in A_p$. Observe that by Proposition \ref{prop-difference}
$$
\begin{aligned}
\Big(\int_0^\vc &t^{-s}\left|\left(t\La e^{-t\La} +t\Delta e^{t\Delta}\right)f(x)\right|^2\f{dt}{t}\Big)^{1/2}\\
&\leq \left(\sum_{j\in \mathbb{Z}}\int_{2^{2j}}^{2^{2(j+1)}} t^{-s}\left|\left(t\La e^{-t\La} +t\Delta e^{t\Delta}\right)f(x)\right|^2\f{dt}{t}\right)^{1/2}\\
&\leq \left[\sum_{j\in \mathbb{Z}}\int_{2^{2j}}^{2^{2(j+1)}} t^{-s}\left(\int_{\mathbb{R}^d} |D_t(x,y)|\,|f(y)|dy\right)^2\f{dt}{t}\right]^{1/2}\\
&\leq \left[\sum_{j\in \mathbb{Z}}\int_{2^{2j}}^{2^{2(j+1)}} 2^{-2js}\left(\int_{\mathbb{R}^d}2^{-jd}\left(1+\f{|x|+|y|}{2^j}\right)^{-2}e^{-\f{|x-y|^2}{c2^{2j}}}|f(y)|dy\right)^2\f{dt}{t}\right]^{1/2}\\
&\leq \sum_{j\in \mathbb{Z}}2^{-js}\int_{\mathbb{R}^d}2^{-jd}\left(1+\f{|x|+|y|}{2^j}\right)^{-2}e^{-\f{|x-y|^2}{c2^{2j}}}|f(y)|dy\\
\end{aligned}
$$
where in the last inequality we used the fact that $\ell_1\hookrightarrow\ell_2$.

As in \cite{K.et.al} we split the right hand side term above into two terms with respect to low-energy and high-energy cases.
$$
\begin{aligned}
\Big(\int_0^\vc &t^{-s}\left|\left(t\La e^{-t\La} +t\Delta e^{t\Delta}\right)f(x)\right|^2\f{dt}{t}\Big)^{1/2}\\
&\leq\int_{\mathbb{R}^d}\sum_{j\in \mathbb{Z}: 2^j> |x|+|y|}2^{-j(d+s)}\left(1+\f{|x|+|y|}{2^j}\right)^{-2}e^{-\f{|x-y|^2}{c2^{2j}}}|f(y)|dy\\
& \quad +\int_{\mathbb{R}^d}\sum_{j\in \mathbb{Z}: 2^j\leq |x|+|y|}2^{-j(d+s)}\left(1+\f{|x|+|y|}{2^j}\right)^{-2}e^{-\f{|x-y|^2}{c2^{2j}}}|f(y)|dy\\
:=I_1(x)+I_2(x).
\end{aligned}
$$
For the first term, we have
$$
\begin{aligned}
I_1(x)&\leq \int_{\mathbb{R}^d}\sum_{j\in \mathbb{Z}: 2^j> |x|+|y|}2^{-j(d+s)}|f(y)|dy \lesi \int_{\mathbb{R}^d}\f{1}{(|x|+|y|)^{d+s}}|f(y)|dy\\
&\lesi \int_{|y|\leq |x|}\f{1}{(|x|+|y|)^{d+s}}|f(y)|dy+\int_{|y|> |x|}\f{1}{(|x|+|y|)^{d+s}}|f(y)|dy\\
&\lesi I_{11}(x)+I_{12}(x),
\end{aligned}
$$
which implies
$$
\|I_1(\cdot)\|_{L^p_w}\lesi \|I_{11}(\cdot)\|_{L^p_w}+\|I_{12}(\cdot)\|_{L^p_w}.
$$

It is easy to see that
$$
I_{11}(x)\leq \int_{|y|\leq |x|}\f{|y|^s}{|x|^{d+s}}\f{|f(y)|}{|y|^s}dy\leq \mathcal{M}\left(\f{f}{|\cdot|^s}\right)(x)
$$
which yields that
$$
\|I_{11}(\cdot)\|_{L^p_w}\lesi \left\|\f{f}{|x|^s}\right\|_{L^p_w}.
$$
Taking $g\in L^{p'}(v), v=w^{1-p'}\in A_{p'}$ we then have
$$
\begin{aligned}
\langle I_{12}(\cdot), g\rangle &\leq \int_{\mathbb{R}^d}\int_{|y|> |x|}\f{1}{(|x|+|y|)^{d+s}}|f(y)|\, |g(x)|dydx\\
&\leq \int_{\mathbb{R}^d}\int_{|y|> |x|}\f{1}{|y|^{d}}\f{|f(y)|}{|y|^s}\, |g(x)|dydx\\
&\leq \int_{\mathbb{R}^d}\int_{|x|<|y|}\f{1}{|y|^{d}}\f{|f(y)|}{|y|^s}\, |g(x)|dx dy\lesi \int_{\mathbb{R}^d}\left|\mathcal{M}g(y)\right|\left|\f{f(y)}{|y|^s}\right|dy\\
&\lesi  \left\|\f{f}{|\cdot|^s}\right\|_{L^p_w}\|\mathcal{M}g\|_{L^{p'}_v}\lesi \left\|\f{f}{|x|^s}\right\|_{L^p_w}\|g\|_{L^{p'}_v}.
\end{aligned}
$$
As a consequence,
$$
\|I_{12}(\cdot)\|_{L^p_w}\lesi \left\|\f{f}{|x|^s}\right\|_{L^p_w}.
$$

We turn to the second term $I_2(x)$. For $0<\epsilon<\f{2-s}{2}$, we have
$$
\begin{aligned}
I_2(x)&\lesi \int_{\mathbb{R}^d}\sum_{j\in \mathbb{Z}: 2^j\leq |x|+|y|}\f{1}{|x-y|^{d-\epsilon}}\f{1}{(|x|+|y|)^{s+\epsilon}}\left(\f{|x|+|y|}{2^j}\right)^{-2+s+\epsilon} |f(y)|dy\\
&\lesi \int_{\mathbb{R}^d}\f{1}{|x-y|^{d-\epsilon}}\f{1}{(|x|+|y|)^{s+\epsilon}}|f(y)|dy\lesi \int_{\mathbb{R}^d}\f{1}{|x-y|^{d-\epsilon}}\f{1}{(|x|+|y|)^{\epsilon}}\f{|f(y)|}{|y|^s}dy\\
&:= \int_{\Gamma_1(x)}\ldots+\int_{\Gamma_2(x)}\ldots+\int_{\Gamma_3(x)}\ldots+\int_{\Gamma_4(x)}\ldots\\
&:= I_{21}(x)+I_{22}(x)+I_{23}(x)+I_{24}(x),
\end{aligned}
$$
where $\Gamma_1(x)=\{y: |y|<|x/2|\}$, $\Gamma_2(x)=\{y: |y|\geq 2|x|\}$, $\Gamma_3(x)=\{y: |x|/2\leq   |y|<2|x|\}\cap B(x, |x|/2)$ and $\Gamma_4(x)=\{y: |x|/2\leq|y|<2|x|\}\cap B(x, |x|/2)^c$.

It is easy to dominate $I_{21}(x)$ as follows
$$
\begin{aligned}
I_{21}(x)\lesi \int_{|y|<|x|/2}\f{1}{|x|^{d}} \f{|f(y)|}{|y|^s}dy\lesi \mathcal{M}\left(\f{f}{|\cdot|^s}\right)(x),
\end{aligned}
$$
which implies that
$$
\|I_{21}(\cdot)\|_{L^p_w}\lesi \left\|\f{f}{|x|^s}\right\|_{L^p_w}.
$$
For the term $I_{22}(x)$, we have
$$
\begin{aligned}
I_{22}(x)\lesi \int_{|y|\geq 2|x|}\f{1}{|y|^{d}} \f{|f(y)|}{|y|^s}dy.
\end{aligned}
$$
At this stage, by using the argument in the estimate $I_{12}(\cdot)$ we also get that
$$
\|I_{22}(\cdot)\|_{L^p_w}\lesi \left\|\f{f}{|x|^s}\right\|_{L^p_w}.
$$

In addition, we have
$$
I_{23}(x)\lesi \int_{B(x, 2|x|)}\f{1}{|x|^d}\f{|f(y)|}{|y|^s}dy\lesi \mathcal{M}\left(\f{f}{|\cdot|^s}\right)(x),
$$
which implies that
$$
\|I_{23}(\cdot)\|_{L^p_w}\lesi \left\|\f{f}{|x|^s}\right\|_{L^p_w}.
$$

The last term $I_{24}$ can be dealt with as follows.
$$
\begin{aligned}
I_{24}(x)&\lesi \int_{B(x, |x|/2)^c}\f{1}{|x-y|^{d-\epsilon} |x|^\epsilon}\f{|f(y)|}{|y|^s}dy\\
&\lesi \sum_{j=0}^\vc \int_{2^{-j-1}|x|\leq |x-y|<2^{-j}|x|}\f{1}{|x-y|^{d-\epsilon} |x|^\epsilon}\f{|f(y)|}{|y|^s}dy\\
&\lesi \sum_{j=0}^\vc 2^{-j\epsilon}\mathcal{M}\left(\f{f}{|\cdot|^s}\right)(x)\lesi \mathcal{M}\left(\f{f}{|\cdot|^s}\right)(x)
\end{aligned}
$$
which implies that
$$
\|I_{24}(\cdot)\|_{L^p_w}\lesi \left\|\f{f}{|x|^s}\right\|_{L^p_w}.
$$

\bigskip

\noindent{\bf Case 2: $-\left(\f{d-2}{2}\right)^2\leq a< 0$}

Fix $p\in \left(1\vee \f{d}{d+s-\sigma},d_\sigma\right)$ and $w\in A_{\f{p}{1\vee \f{d}{d+s-\sigma}}}\cap RH_{\left(\f{d_\sigma}{p}\right)'}$. Hence, there exist $p_1, q_1$ so that $1\vee \f{d}{d+s-\sigma}<p_1<p<q_1<d_\sigma$, and $w\in A_{\f{p}{p_1}}\cap RH_{\left(\f{q_1}{p}\right)'}$.
Similarly to Case 1, by Proposition \ref{prop-difference} we obtain that
$$
\begin{aligned}
\Big(\int_0^\vc &t^{-s}\left|\left(t\La e^{-t\La} +t\Delta e^{t\Delta}\right)f(x)\right|^2\f{dt}{t}\Big)^{1/2}\\
&\leq\int_{\mathbb{R}^d}\sum_{j\in \mathbb{Z}: 2^j\geq  (|x|\wedge |y|)/2}2^{-j(d+s)}\left(1+\f{2^j}{|x|}\right)^{\sigma}\left(1+\f{2^j}{|y|}\right)^{\sigma}e^{-\f{|x-y|^2}{c2^{2j}}}|f(y)|dy\\
& \quad +\int_{\mathbb{R}^d}\sum_{j\in \mathbb{Z}: 2^j< (|x|\wedge |y|)/2}2^{-j(d+s)}\left(1+\f{|x|+|y|}{2^j}\right)^{-2}e^{-\f{|x-y|^2}{c2^{2j}}}|f(y)|dy\\
&:=J_1(x)+J_2(x).
\end{aligned}
$$
The argument used to estimate $I_2(x)$ in Case 1 also shows that
$$
\|J_2(\cdot)\|_{L^p_w}\lesi \left\|\f{f}{|x|^s}\right\|_{L^p_w}.
$$
It remains to show that
$$
\|J_1(\cdot)\|_{L^p_w}\lesi \left\|\f{f}{|x|^s}\right\|_{L^p_w}.
$$
Indeed, we have
$$
\begin{aligned}
J_1(x)&\leq\int_{\mathbb{R}^d}\sum_{j\in \mathbb{Z}: 2^j\geq  (|x|\vee |y|)/2}2^{-j(d+s)}\left(1+\f{2^j}{|x|}\right)^{\sigma}\left(1+\f{2^j}{|y|}\right)^{\sigma}e^{-\f{|x-y|^2}{c2^{2j}}}|f(y)|dy\\
& \quad +\int_{\mathbb{R}^d}\sum_{j\in \mathbb{Z}: |y|/2>2^j\geq  |x|/2}2^{-j(d+s)}\left(1+\f{2^j}{|x|}\right)^{\sigma}\left(1+\f{2^j}{|y|}\right)^{\sigma}e^{-\f{|x-y|^2}{c2^{2j}}}|f(y)|dy\\
& \quad +\int_{\mathbb{R}^d}\sum_{j\in \mathbb{Z}: |x|/2>2^j\geq  |y|/2}2^{-j(d+s)}\left(1+\f{2^j}{|x|}\right)^{\sigma}\left(1+\f{2^j}{|y|}\right)^{\sigma}e^{-\f{|x-y|^2}{c2^{2j}}}|f(y)|dy\\
&:= J_{11}(x)+J_{12}(x)+J_{13}(x).
\end{aligned}
$$
For the term $J_{11}$, one has
$$
\begin{aligned}
J_{11}(x)&\leq\int_{\mathbb{R}^d}\f{1}{(|x|+|y|)^{d+s-2\sigma}|x|^\sigma |y|^\sigma}|f(y)|dy\\
&\lesi \int_{|y|\leq |x|}\ldots +\int_{|y|\geq |x|}\ldots\\
&:=J^1_{11}(x)+J^2_{11}(x).
\end{aligned}
$$
We now consider the contribution of $J^1_{11}(x)$. In this case, we have
$$
\begin{aligned}
J^1_{11}(x)&\leq\int_{|y|\leq |x|}\f{|y|^{s-\sigma}}{|x|^{d+s-\sigma} }\f{|f(y)|}{|y|^s}dy\\
\end{aligned}
$$
If $s-\sigma\geq 0$ then
$$
J^1_{11}(x)\leq\int_{|y|\leq |x|}\f{1}{|x|^{d} }\f{|f(y)|}{|y|^s}dy\lesi \mathcal{M}\left(\f{f}{|\cdot|}\right)(x),
$$
and hence,
$$
\|J^1_{11}(\cdot)\|_{L^p_w}\lesi \left\|\f{f}{|x|^s}\right\|_{L^p_w}.
$$
Otherwise, if $s-\sigma< 0$ then by H\"older's inequality
$$
\begin{aligned}
J^1_{11}(x) &\lesi \left(\int_{|y|\leq |x|} \f{|y|^{(s-\sigma)p_1'}}{|x|^{(d+s-\sigma)p_1'} }dy\right)^{1/p_1'}\left(\int_{|y|\leq |x|} \f{|f(y)|^{p_1}}{|y|^{sp_1}}dy\right)^{1/p_1}.
\end{aligned}
$$
Since $0<(\sigma-s)p_1'<d$, by Lemma \ref{lem1-Tt}, we have
$$
\begin{aligned}
J^1_{11}(x) &\lesi \left(\f{1}{|x|^d}\int_{|y|\leq |x|} \f{|f(y)|^{p_1}}{|y|^{sp_1}}dy\right)^{1/p_1}\lesi \mathcal{M}_{p_1}\left(\f{f}{|\cdot|^s}\right)(x)
\end{aligned}
$$
which implies that
$$
\|J^1_{11}(\cdot)\|_{L^p_w}\lesi \left\|\f{f}{|x|^s}\right\|_{L^p_w}.
$$
To estimate the term $J^2_{11}$ we employ a duality argument. Set $v=w^{1-p'}$, and hence by (vii) Lemma \ref{weightedlemma1},  $v\in A_{p'/q_1'}$. For $g\in L^{p'}_v$ we have
$$
\begin{aligned}
\langle J^2_{11}, g\rangle&\lesi \int_{\mathbb{R}^d}\int_{|y|\geq |x|}\f{1}{(|y|)^{d-\sigma}|x|^\sigma} \f{|f(y)|}{|y|^s}dy|g(x)|dx\\
& \lesi\int_{\mathbb{R}^d}\int_{|x|\leq |y|} \f{|g(x)|}{|x|^\sigma}dx\f{|f(y)|}{|y|^{d+s-\sigma}}dy\\
& \lesi\int_{\mathbb{R}^d}\left(\int_{|x|\leq |y|}|g(x)|^{q_1'} dx\right)^{1/q'_1}\left(\int_{|x|\leq |y|}\f{1}{|x|^{\sigma q_1}}dx\right)^{1/q_1} \f{|f(y)|}{|y|^{d+s-\sigma}}dy
\end{aligned}
$$
which, together with Lemma \ref{lem1-Tt} and Lemma \ref{Lem-maximalfunction}, gives
$$
\begin{aligned}
\langle J^2_{11}, g\rangle
&\lesi \int_{\mathbb{R}^d}\left(\f{1}{|y|^d}\int_{|x|\leq |y|}|g(x)|^{q_1'} dx\right)^{1/q'_1}\f{|f(y)|}{|y|^s}dy\\
&\lesi \left\langle \mathcal{M}_{q_1'}g, \f{f}{|\cdot|^s}\right\rangle\\
&\lesi \|\mathcal{M}_{q_1'}g\|_{L^{p'}_v}\left\|\f{f}{|\cdot|^s}\right\|_{L^p_w}\lesi \|g\|_{L^{p'}_v}\left\|\f{f}{|\cdot|^s}\right\|_{L^p_w}.
\end{aligned}
$$
Hence,
$$
\|J^2_{11}(\cdot)\|_{L^p_w}\lesi \left\|\f{f}{|x|^s}\right\|_{L^p_w}.
$$
Let us move on the term $J_{12}$. We split this term as follows.
$$
\begin{aligned}
J_{12}(x)&\lesi \int_{|y|\geq 2|x|}\sum_{\substack{ j\in \mathbb{Z}: |y|/2>2^j\geq  |x|/2}}2^{-j(d+s)}\left(1+\f{2^j}{|x|}\right)^{\sigma}\left(1+\f{2^j}{|y|}\right)^{\sigma}e^{-\f{|x-y|^2}{c2^{2j}}}|f(y)|dy\\
&+\int_{ |y|<2|x|}\sum_{\substack{j\in \mathbb{Z}: |y|/2>2^j\geq  |x|/2}}2^{-j(d+s)}\left(1+\f{2^j}{|x|}\right)^{\sigma}\left(1+\f{2^j}{|y|}\right)^{\sigma}e^{-\f{|x-y|^2}{c2^{2j}}}|f(y)|dy\\
&=J^1_{12}(x)+J^2_{12}(x).
\end{aligned}
$$
It is easy to see that
$$
\begin{aligned}
J^2_{12}(x)&\lesi \int_{|y|< 2|x|}\sum_{\substack{j\in \mathbb{Z}: |y|/2>2^j\geq  |x|/2}}2^{-j(d+s)}\left(1+\f{2^j}{|x|}\right)^{\sigma}\left(1+\f{2^j}{|y|}\right)^{\sigma}e^{-\f{|x-y|^2}{c2^{2j}}}|f(y)|dy\\
&\lesi \int_{|y|<2|x|}\f{1}{|x|^{d+s}}|f(y)|dy\lesi \mathcal{M}\left(\f{f}{|\cdot|^s}\right)(x)
\end{aligned}
$$
which yields that
$$
\|J^2_{12}(\cdot)\|_{L^p_w}\lesi \left\|\f{f}{|x|^s}\right\|_{L^p_w}.
$$
To consider the contribution of $J^1_{12}$, we write
$$
\begin{aligned}
J^1_{12}(x)&\lesi \int_{|y|\geq 2|x|}\sum_{\substack{ j\in \mathbb{Z}: |y|/2>2^j\geq  |x|/2}}2^{-j(d+s)}\left(1+\f{2^j}{|x|}\right)^{\sigma}\left(1+\f{2^j}{|y|}\right)^{\sigma}e^{-\f{|x-y|^2}{c2^{2j}}}|f(y)|dy.
\end{aligned}
$$
In this case, we have $\left(1+\f{2^j}{|y|}\right)^{\sigma}\lesi 1$. Hence
$$
\begin{aligned}
J^1_{12}(x)&\lesi \int_{|y|\geq 2|x|}\f{1}{|x-y|^{d+s-\sigma}|x|^\sigma}|f(y)|dy\sim \int_{|y|\geq 2|x|}\f{1}{|y|^{d+s-\sigma}|x|^\sigma}|f(y)|dy.
\end{aligned}
$$
At this stage, arguing similarly to the estimate of $J_{11}^2$ we also obtain that

$$
\|J_{12}(\cdot)\|_{L^p_w}\lesi \left\|\f{f}{|x|^s}\right\|_{L^p_w}.
$$

It remains to show that
$$
\|J_{13}(\cdot)\|_{L^p_w}\lesi \left\|\f{f}{|x|^s}\right\|_{L^p_w}.
$$
The proof of this estimate can be done in the same manner as that of $J_{12}$. We leave it to the interested reader.

This completes our proof.
\end{proof}

\subsection{Proof of Theorems \ref{mainthm} and Theorem \ref{the:mainthm3}}
\begin{proof}[Proof of Theorem \ref{mainthm}:]
	Fix $0<s<2$, $d'_\sigma<p<d_{s+\sigma}$ and $w\in A_{p/d'_{\sigma}}\cap RH_{(d_{s+\sigma}/p)'}$. Then by Theorem \ref{thm-square}, Theorem \ref{thm-difference} and Theorem \ref{thm-HardyIneq} we have
	$$
	\begin{aligned}
	\|(-\Delta)^{s/2}f\|_{L^p_w}&\lesi \left\|\left(\int_0^\vc t^{-s}|t(-\Delta)e^{t\Delta}f|^2\f{dt}{t}\right)^{1/2}\right\|_{L^p_w}\\
	&\lesi \left\|\left(\int_0^\vc t^{-s}\left|(t\La e^{-t\La}+t\Delta e^{t\Delta})f\right|^2\f{dt}{t}\right)^{1/2}\right\|_{L^p_w}+\left\|\left(\int_0^\vc t^{-s}|t\La e^{-t\La}f|^2\f{dt}{t}\right)^{1/2}\right\|_{L^p_w}\\
	&\lesi \left\|\f{f}{|x|^s}\right\|_{L^p_w}+\|\La^{s/2}f\|_{L^p_w}\\
	&\lesi \|\La^{s/2}f\|_{L^p_w}.
	 \end{aligned}
	$$
	Conversely, for $1<p<\vc$ with $p_1:=1\vee \f{d}{d-\sigma}<p<\f{d}{s\vee \sigma}:=p_2$ and $w\in A_{p/p_1}\cap RH_{(p_2/p)'}$ we have
	$$
	\begin{aligned}
	\|\La^{s/2}f\|_{L^p_w}&\lesi \left\|\left(\int_0^\vc t^{-s}|t\La e^{-t\La}f|^2\f{dt}{t}\right)^{1/2}\right\|_{L^p_w} \\
	&\lesi \left\|\left(\int_0^\vc t^{-s}\left|(t\La e^{-t\La}+t\Delta e^{t\Delta})f\right|^2\f{dt}{t}\right)^{1/2}\right\|_{L^p_w}+\left\|\left(\int_0^\vc t^{-s}|t(-\Delta)e^{t\Delta}f|^2\f{dt}{t}\right)^{1/2}\right\|_{L^p_w}\\
	&\lesi \left\|\f{f}{|x|^s}\right\|_{L^p_w}+\|(-\Delta)^{s/2}f\|_{L^p_w}\\
	&\lesi \|(-\Delta)^{s/2}f\|_{L^p_w},
	\end{aligned}
	$$
	where in the last inequality we used Theorem \ref{thm-HardyIneq}.
	
	This completes our proof.
	
	\end{proof}


\begin{proof}[Proof of Theorem \ref{the:mainthm3}]
	Before starting the proof, we note that the flow 
	$e^{it \mathcal{L}_{a}}$ satisfies for all $s$
	the conservation laws
	\begin{equation*}
	  \|\mathcal{L}_{a}^{s/2}e^{it \mathcal{L}_{a}}u_{0}\|_{L^{2}}=
	  \|\mathcal{L}_{a}^{s/2}u_{0}\|_{L^{2}}
	  \qquad
	  \forall t\in \mathbb{R}
	\end{equation*}
	by self-adjointness. 
	By an elementary application of 
	Theorem \ref{mainthm} in the unweighted case,
  this implies the almost conservation of $H^{s}$ norms
	\begin{equation}\label{eq:energy}
	  \|(-\Delta)^{s/2}e^{it \mathcal{L}_{a}}u_{0}\|_{L^{2}}
	  \simeq
	  \|(-\Delta)^{s/2}u_{0}\|_{L^{2}}
	  \quad\text{for}\quad 0<s<2.
	\end{equation}

	Consider a generic Schr\"{o}dinger equation with potential
	\begin{equation*}
	  iu_{t}+\Delta u-c(x)u=0, \ \ c(x)=\f{a}{|x|^2}.
	\end{equation*}
	If $u(t,x)$ solves this equation,
  then the following identity holds for any sufficiently smooth
  $\psi:\mathbb{R}^{d}\to \mathbb{R}$:
  \begin{equation}\label{eq:virial}
    \Re \nabla \cdot Q
    +
    \Im\partial_{t}\{\bar{u}\nabla \psi \cdot\nabla u\}
    =
    \textstyle 
    -\frac12 \Delta^{2}\psi|u|^{2}
    +2 \sum_{j,k=1}^{d}
    \partial_{j}u \partial_{j}\partial_{k}\psi \partial_{k}\bar{u}
    -\nabla \psi \cdot\nabla c |u|^{2}
  \end{equation}
  where
  \begin{equation*}
    \textstyle
    Q=
    \nabla u (\overline{u}\Delta \psi+
    \nabla \psi \cdot \nabla \overline{u})
    -\frac12 \nabla\Delta \psi|u|^{2}
    -\nabla\psi 
    \left[c|u|^{2}-iu_{t}\overline{u}
      +|\nabla u |^{2}\right].
  \end{equation*}
 Formula \eqref{eq:virial} is usually called a \emph{virial}
 (or \emph{Morawetz}) identity, and it is easy to check directly
 by expanding the derivative $\nabla \cdot Q$, and using
 the equation for $u(t,x)$
 (see e.g.~\cite{BRV}, \cite{DF}, 
 \cite{DR}).

 Consider the case when the weight $\psi$ is a radial function;
 by abuse of notation we use the same symbol
 $\psi(x)=\psi(|x|)$. Then we can write
 \begin{equation*}
   \nabla \psi \cdot \nabla c=
   \psi' \partial_{r}c
 \end{equation*}
 where $\partial_{r}=\frac{x}{|x|}\cdot \nabla$ is the radial
 derivative. If we denote by 
 $\nabla^{T}u=\nabla u-\frac{x}{|x|}\partial_{r}u$ the tangential
 component of $\nabla u$, we have also the identities
 \begin{equation*}
   |\nabla u|^{2}=|\partial_{r} u|^{2}+|\nabla^{T}u|^{2}
   \quad\text{and}\quad 
   \sum_{j,k=1}^{d}
       \partial_{j}u \partial_{j}\partial_{k}\psi \partial_{k}\bar{u}=
   \psi''|\partial_{r}u|^{2}+\frac{\psi'}{|x|}|\nabla^{T}u|^{2}.
 \end{equation*}
 Then formula \eqref{eq:virial} reduces to
 \begin{equation}\label{eq:virial2}
   \Re \nabla \cdot Q
   +
   \Im\partial_{t}\{\bar{u}\psi' \partial_{r} u\}
   =
   \textstyle 
   2\psi''|\partial_{r}u|^{2}+2\frac{\psi'}{|x|}|\nabla^{T}u|^{2}
   -\frac12 \Delta^{2}\psi|u|^{2}
   -\psi'\partial_{r} c |u|^{2}.
 \end{equation}

 We now pick an explicit radial weight
 \begin{equation*}
   \psi(r)=\int_{0}^{r}\frac{s^{\epsilon}}{1+s^{\epsilon}}ds,
   \qquad
   0<\epsilon<1
 \end{equation*}
 where the parameter $\epsilon$ will be chosen later.
 A straightforward computation gives, writing for brevity
 $r=|x|$,
 \begin{equation*}
   2\psi''|\partial_{r}u|^{2}+2\frac{\psi'}{|x|}|\nabla^{T}u|^{2}=
   \frac{2 \epsilon r^{\epsilon}}{(1+r^{\epsilon})^{2}}
   \frac1r |\partial_{r}u|^{2}+
   \frac{2r^{\epsilon}}{1+r^{\epsilon}}\frac1r|\nabla^{T}u|^{2}
   \ge
   \frac{2 \epsilon r^{\epsilon}}{(1+r^{\epsilon})^{2}}
   \frac1r |\nabla u|^{2},
 \end{equation*}
 \begin{equation*}
   -\psi'\partial_{r} c |u|^{2}=
   \frac{r^{\epsilon}}{1+r^{\epsilon}}
   \frac{2a}{r^{3}}|u|^{2}
 \end{equation*}
 and
 \begin{equation*}
   -\frac12 \Delta^{2}\psi|u|^{2}=
   \frac{1}{r^{3}}
   \frac{r^{\epsilon}}{1+r^{\epsilon}}
   \left[
   	\frac{\mu_{d}}{2}+
   	\epsilon \cdot \beta(r^{\epsilon})
   \right]|u|^{2},
   \qquad
   \mu_{d}=(d-1)(d-3),
 \end{equation*}
 where
 \begin{equation*}
   2\beta(r)=
   -\frac{d^{2}-6d+7}{1+r}+
   \epsilon (2d-5)\frac{r^{2}-1}{(1+r)^{3}}-
   \epsilon^{2}\frac{r^{2}-4r+1}{(1+r)^{3}}
   \quad\implies\quad |\beta(r)|\le 3d^{2}
 \end{equation*}
 since $d\ge3$.
 We substitute these expressions into \eqref{eq:virial2} and
 integrate over $\mathbb{R}^{d}$; if $u$ is a $H^{1}$ solution
 the term in divergence form $\nabla Q$ vanishes, and
 we obtain
 \begin{equation*}
  \Im\int\partial_{t}\{\bar{u}\psi' \partial_{r} u\}dx
  \ge
  \int
  \frac{2 \epsilon r^{\epsilon}}{(1+r^{\epsilon})^{2}}
  \frac{|\nabla u|^{2}}{r}dx+
  \int
  \left[
    2a+
  	\frac{\mu_{d}}{2}-
  	3d^{2}\epsilon
  \right]
  \frac{r^{\epsilon}}{1+r^{\epsilon}}
  \frac{|u|^{2}}{r^{3}}dx.
 \end{equation*}
 Recall that by assumption we have
 \begin{equation*}
 	 \delta:=
   a+\left(\frac{d-2}{2}\right)^{2}-\frac14
   =
   a+ \frac{\mu_{d}}{4}>0,
 \end{equation*}
 thus if we choose $\epsilon=\min\{1,d^{-2}\delta/3\}$ 
 and note that $\epsilon<\delta$ we have proved the inequality
 \begin{equation}\label{eq:intid}
 	\Im\int\partial_{t}\{\bar{u}\psi' \partial_{r} u\}dx
 	\ge
 	\int
 	\frac{\epsilon r^{\epsilon}}{(1+r^{\epsilon})^{2}}
 	\frac{|\nabla u|^{2}}{r}dx+
 	\int
 	\frac{\epsilon r^{\epsilon}}{1+r^{\epsilon}}
 	\frac{|u|^{2}}{r^{3}}dx.
 \end{equation}
 We next integrate \eqref{eq:intid} with respect to time on the
 interval $t\in[0,T]$; we obtain
 \begin{equation}\label{eq:intxt}
 	 \left.
   \Im\int\bar{u}\psi' \partial_{r} u dx\right|_{t=0}^{t=T}
   \ge
   \int_{0}^{T}
   \int
   \left[
   	\frac{\epsilon r^{\epsilon}}{(1+r^{\epsilon})^{2}}
   	\frac{|\nabla u|^{2}}{r}d
   	+
   	\frac{\epsilon r^{\epsilon}}{1+r^{\epsilon}}
   	\frac{|u|^{2}}{r^{3}}
   \right]
   dxdt.
 \end{equation}
 We note that the bilinear form
 \begin{equation*}
   B(v,w)=\int \overline{v(x)}\psi'(|x|)\partial_{r} w(x)dx
 \end{equation*}
 satisfies the estimate
 \begin{equation}\label{eq:H12}
   |B(u,v)|\le 3\|v\|_{\dot H^{1/2}}\|w\|_{\dot H^{1/2}},
 \end{equation}
 where we used the notation 
 \begin{equation*}
   \|v\|_{\dot H^{s}}:=\|(-\Delta)^{s/2}v\|_{L^{2}}.
 \end{equation*}
 Indeed, by Cauchy-Schwartz and the inequality $|\psi'|\le1$ we have
 \begin{equation*}
   |B(v,w)|\le\|v\|_{L^{2}}\|w\|_{\dot H^{1}}
 \end{equation*}
 On the other hand, integrating by parts we have
 \begin{equation*}
   B(v,w)=\int \overline{v(x)}\nabla\psi(x)\nabla w(x)dx=
   \int
   [
   w\overline{\nabla v}\nabla \psi +
   w\overline{v}\Delta \psi
   ]dx.
 \end{equation*}
 Note that $|\nabla \psi|\le1$ while
 \begin{equation*}
   |\Delta \psi|=\frac{\epsilon r^{\epsilon-1}}{(1+r^{\epsilon})^{2}}+
   \frac{(d-1) r^{\epsilon-1}}{1+r^{\epsilon}}
   \le
   \frac{d}{r}
 \end{equation*}
 so that, by Hardy's inequality,
 \begin{equation*}
   \left|\int w\overline{v}\Delta \psi dx\right|\le
   d\|w\|_{L^{2}}\left\|\frac{v}{|x|}\right\|_{L^{2}}
   \le
   \frac{2d}{d-2}
   \|w\|_{L^{2}}\|v\|_{\dot H^{1}}.
 \end{equation*}
 Thus we obtain, for $d\ge3$,
 \begin{equation*}
   |B(v,w)|\le 
   \left(1+\frac{2d}{d-2}\right)\|w\|_{L^{2}}\|v\|_{\dot H^{1}}
   \le
   7 \|w\|_{L^{2}}\|v\|_{\dot H^{1}}.
 \end{equation*}
 Summing up we have proved that
 $B:\dot H^{1}\times L^{2}\to \mathbb{C}$ with norm 1
 and
 $B:L^{2} \times\dot H^{1}\to \mathbb{C}$ with norm $\le7$. 
 By complex bilinear interpolation this implies
 \begin{equation*}
   B:\dot H^{1/2}\times \dot H^{1/2}\to \mathbb{C}
 \end{equation*}
 with norm $\sqrt{7}\le 3$ as claimed.
 Using \eqref{eq:H12} we can write
 \begin{equation*}
   	 \left.
     \Im\int\bar{u}\psi' \partial_{r} u dx\right|_{t=0}^{t=T}
     \le
     4\|u(T)\|_{\dot H^{1/2}}^{2}+4\|u(0)\|_{\dot H^{1/2}}^{2}
     \le
     C_{0}\|u(0)\|_{\dot H^{1/2}}^{2}
 \end{equation*}
 where in the last inequality we used the almost conservation
 law \eqref{eq:energy}, and the constant $C_{0}$
 is independent of $T$. 
 Using the last inequality in \eqref{eq:intxt} we obtain
 \begin{equation*}
   \int_{0}^{T}
   \int
   \left[
   	\frac{\epsilon r^{\epsilon}}{(1+r^{\epsilon})^{2}}
   	\frac{|\nabla u|^{2}}{r}d
   	+
   	\frac{\epsilon r^{\epsilon}}{1+r^{\epsilon}}
   	\frac{|u|^{2}}{r^{3}}
   \right]
   dxdt
   \le
   C_{0}\|u(0)\|_{\dot H^{1/2}}^{2}
 \end{equation*}
 and letting $T\to +\infty$ we arrive at
 \eqref{eq:firstest}.

 In order to prove \eqref{eq:secondest}, we isolate the first term
 in \eqref{eq:firstest} and we use again Theorem \ref{mainthm}
 in the unweighted case:
 \begin{equation*}
 	\int \int
   \frac{ r^{\epsilon-1}}{(1+r^{\epsilon})^{2}}
   |\nabla e^{it \mathcal{L}_{a}}f|^{2}
   dxdt
   \le
   C \epsilon^{-1}\|(-\Delta)^{1/4}f\|_{L^{2}}^{2}
   \simeq
   \epsilon^{-1}\|\mathcal{L}_{a}^{1/4}f\|_{L^{2}}^{2}.
 \end{equation*}
 We note that the weight 
 $w(x)=\frac{ r^{\epsilon-1}}{(1+r^{\epsilon})^{2}}=
   \frac{ |x|^{\epsilon-1}}{(1+|x|^{\epsilon})^{2}}$
 is an $A_{2}$ weight (with $[\cdot ]_{A_{2}}$ norm uniformly
 bounded for $\epsilon\in(0,1)$), hence we can replace
 $\nabla$ with $(-\Delta)^{1/2}$ at the left hand side.
 Moreover, $w(x)$
 satisfies the conditions of Theorem \ref{mainthm},
 so that we can write
 \begin{equation*}
   \int
     \frac{ r^{\epsilon-1}}{(1+r^{\epsilon})^{2}}
     |\mathcal{L}^{1/2}_{a} e^{it \mathcal{L}_{a}}f|^{2}
    dx
   \lesssim
   	\int
      \frac{ r^{\epsilon-1}}{(1+r^{\epsilon})^{2}}
      |\nabla e^{it \mathcal{L}_{a}}f|^{2}
      dx
    \lesssim
    \epsilon^{-1}\|\mathcal{L}_{a}^{1/4}f\|_{L^{2}}^{2}.
 \end{equation*}
 Since $\mathcal{L}_{a}^{1/4}$ commutes with the flow, this implies
 \begin{equation*}
   \int
     \frac{ r^{\epsilon-1}}{(1+r^{\epsilon})^{2}}
     |\mathcal{L}^{1/4}_{a} e^{it \mathcal{L}_{a}}f|^{2}
    dx
   \lesssim
    \epsilon^{-1}\|f\|_{L^{2}}^{2}.
 \end{equation*}
 Finally, again by Theorem \ref{mainthm}, we have
 \begin{equation*}
   \int
     \frac{ r^{\epsilon-1}}{(1+r^{\epsilon})^{2}}
     |\mathcal{L}^{1/4}_{a} e^{it \mathcal{L}_{a}}f|^{2}
    dx
   \gtrsim
   \int
     \frac{ r^{\epsilon-1}}{(1+r^{\epsilon})^{2}}
     |(-\Delta)^{1/4} e^{it \mathcal{L}_{a}}f|^{2}
    dx
 \end{equation*}
 and this gives \eqref{eq:secondest}.

 Denote now by $R(z)$ the resolvent operator of $\mathcal{L}_{a}$
 and by $\Im R(z)$ its imaginary part:
 \begin{equation*}
   R(z)=(\mathcal{L}_{a}-z)^{-1},
   \qquad
   \Im R(z)=(2i)^{-1}(R(z)-R(\bar{z})).
 \end{equation*}
 Moreover, let $A$ be the operator
 \begin{equation*}
   A=w(x)^{1/2}(-\Delta)^{1/4},
   \qquad
   w(x)=\frac{ |x|^{\epsilon-1}}{(1+|x|^{\epsilon})^{2}}.
 \end{equation*}
 Estimate \eqref{eq:secondest} can be written
 \begin{equation*}
   \|A e^{it \mathcal{L}_{a}}f\|_{L^{2}(\mathbb{R}^{d+1})}
   \lesssim
   \epsilon^{-1/2}\|f\|_{L^{2}}.
 \end{equation*}
 By Kato smoothing theory, 
 applying e.g.~Theorem 2.2 in \cite{D}, we obtain that
 this estimate is equivalent to the resolvent estimate
 \begin{equation}\label{eq:resolv}
   \|A \Im R(z)A^{*}f\|_{L^{2}(\mathbb{R}^{d})}\lesssim
   \epsilon^{-1}\|f\|_{L^{2}},
   \qquad
   z\not\in \mathbb{R},
 \end{equation}
 uniformly in $z\not\in \mathbb{R}$.
 (In the terminology of Kato's theory, the closed
 operator $A$ is $\mathcal{L}_{a}$-\emph{smoothing}).
 Then we are in position to apply Theorem 2.4 from \cite{D} 
 (with $\nu=0$) and we obtain that the operator 
 $A \mathcal{L}_{a}^{-1/4}$ is $\mathcal{L}_{a}^{1/2}$-smoothing, 
 i.e., the following estimate holds:
 \begin{equation*}
   \|Ae^{it\mathcal{L}_{a}^{1/2}}f\|_{L^{2}(\mathbb{R}^{d+1})}
   \lesssim \epsilon^{-1/2}\|\mathcal{L}_{a}^{1/4}f\|_{L^{2}}
 \end{equation*}
 that is to say, we have proved that
 \begin{equation*}
   \|w(x)^{1/2} (-\Delta)^{1/4}
     e^{it\mathcal{L}_{a}^{1/2}}f\|_{L^{2}(\mathbb{R}^{d+1})}
   \lesssim \epsilon^{-1/2}\|\mathcal{L}_{a}^{1/4}f\|_{L^{2}}.
 \end{equation*}
 Finally, using Theorem \ref{mainthm} exactly as in the proof
 of \eqref{eq:secondest}, we can cancel the operators
 $(-\Delta)^{1/4}$ and $\mathcal{L}_{a}^{1/4}$, and
 we obtain estimate \eqref{eq:thirdest}.
\end{proof}

\bigskip
{\bf Acknowledgement.} The first-named and the third-named authors were supported by the research grant ARC DP140100649 from the Australian Research Council.  The fourth-named author is supported by  the research grant ARC 170101060 from the Australian Research Council and by Macquarie University New Staff Grant. The  authors would like to thank the referees for useful comments and suggestions to improve the paper.

\end{document}